\newcommand{\p}{{\partial}}
\newtheorem{theorem}{Theorem}%[section]
\newtheorem{corollary}[theorem]{Corollary}
\newtheorem{proposition}[theorem]{Proposition}
\newtheorem*{proposition*}{Proposition}
{\theoremstyle{definition}

\newtheorem{remark}[theorem]{Remark}

}
\newcommand{\todo}[1][\null]{\ensuremath{\clubsuit}}
\newcommand{\noprint}[1]{}
\begin{document}

\begin{flushleft}
\LARGE \bf
Equivalence groupoids of classes of linear ordinary differential equations and their group classification
\end{flushleft}

\begin{flushleft}\large
Vyacheslav M.~Boyko$^\dag$, Roman O.~Popovych$^{\dag\ddag}$ and Nataliya M.~Shapoval$^\S$
\end{flushleft}

\noindent $^\dag$~Institute of Mathematics of NAS of Ukraine, 3
Tereshchenkivs'ka Str., 01601 Kyiv-4, Ukraine

\medskip

\noindent
$^\ddag$~Wolfgang Pauli Institut, Universit\"at Wien, Oskar-Morgenstern-Platz 1, A-1090 Wien, Austria

\medskip

\noindent
$^\S$~Faculty of Mechanics and Mathematics, Taras Shevchenko National University of Kyiv,\\
\hphantom{${}^\S$} 4e Academician Glushkov Ave., 03127 Kyiv, Ukraine

\medskip

\noindent
E-mails: boyko@imath.kiev.ua, rop@imath.kiev.ua, natalya.shapoval@gmail.com

\medskip

{\vspace{6mm}\par\noindent\hspace*{5mm}\parbox{150mm}{\small
Admissible point transformations of classes of $r$th order linear ordinary differential equations
(in particular, the whole class of such equations and its subclasses of equations 
in the rational form, the Laguerre--Forsyth form, the first and second Arnold forms)
are exhaustively described.
Using these results, the group classification of such equations is revisited within the algebraic approach in three different ways.
}\par\vspace{6mm}}

\noprint{
\noindent
{\it Key words:} equivalence group; equivalence groupoid; Lie symmetry; group classification of differential equations;
linear ordinary differential equation; normalized class of differential equations

\bigskip

\noindent
{\it \it 2010 Mathematics Subject Classification:} 34C14; 34A30
34C14 Symmetries, invariants
34A30 Linear equations and systems, general
17B66 Lie algebras of vector fields and related (super) algebras
22A22 Topological groupoids (including differentiable and Lie groupoids)
22E05 Local Lie groups
58J70 Invariance and symmetry properties
}

\section{Introduction}

The study of Lie symmetries of ordinary differential equations (ODEs) has a~long history,
and the ``Lie theory'' was just started as a systematical and elegant approach to integration of various classes of ODEs.
The first results on possible dimensions of the maximal Lie invariance algebras of ODEs
of any fixed order were obtained by Sophus Lie, see, e.g.,~\cite[pp.~294--301]{Lie1893}
and a modern treatment in~\cite[Section~2]{Gonzalez-Gascon&Gonzalez-Lopez1983}.
Namely, Sophus Lie proved that the dimension of the maximal Lie invariance algebra of an $r$th order ODE
is infinite for $r=1$, not greater than~$8$ for $r=2$ and not greater than~$r+4$ for $r\geqslant3$.
He also showed that each ODE of order $r=1$ is similar with respect to a point transformation to the elementary equation $x'=0$
and that, for equations of order $r\geqslant2$,
the maximal dimension of invariance algebras is reached for equations that are reduced by point transformations
to the elementary equation $x^{(r)}=0$;
cf.\ \cite[Theorem~14]{Olver1994} and \cite[Theorems~6.39 and~6.43]{Olver1995}.

%\looseness=-1
In spite of the fact that transformational and, in particular, symmetry properties of linear ODEs were intensively investigated
(see, e.g., detailed reviews \cite{Ibragimov1992,Mahomed2007,Schwarz2000}
and textbooks \cite{Berkovich2002,Ibragimov1999,Olver1995,Schwarz2008}),
%Schwartz linear ODE Section4.1, pp.~241-245(Thereom5.19=Mashomed&Leach+Krause&Michel); Olver pp217-218 Theorem6.60
in the present paper we consider them from another side,
describing the whole set of admissible transformations between such equations.
This creates a basis for the group classification of linear ODEs within the framework of the algebraic approach,
which is quite effective for solving group classification problems
for both ordinary and partial differential equations;
see, e.g., \cite{Bihlo2012,Popovych&Bihlo2012,Popovych&Kuzinger&Eshraghi2010,Popovych2008,Vaneeva&Popovych2009} and references therein.
Previously, in \cite{Krause&Michel,Mahomed&Leach1990}, the group classification of linear ODEs
was carried out within the framework of the standard ``compatibility'' approach 
based on the study of compatibility of the determining equations for Lie symmetries and the direct solution of these equations, 
which led to cumbersome calculations.
Although the ``compatibility'' approach is the most commonly used in group analysis of differential equations, 
it is efficient only for classes of simple structure.
\mbox{In~\cite[pp.~217--218]{Olver1995}} the solution of the group classification problem of linear ODEs
was related to Wilczynski's result~\cite{Wilczynski1906} on relative invariants of the Laguerre--Forsyth form of these equations.
The similar problem on the classification of linear ODEs up to contact transformations
as well as the associated equivalence problem were considered in detail in~\cite{Yumaguzhin2000a,Yumaguzhin2000b,Yumaguzhin2000c}.

The main purpose of the present paper is to carry out the complete group classification
of the class~$\mathcal L$ of $r$th order ($r\geqslant3$) linear ODEs in more elegant algebraic ways,
using subalgebra analysis of the equivalence algebra associated with~$\mathcal L$.
This properly works since the class~$\mathcal L$ is (pointwise) normalized (in the usual sense),
i.e., transformations from its (usual) point equivalence (pseudo)group%
\footnote{There exist other names for this notion, e.g., ``structure invariance group'' \cite{Schwarz2008}.
The attribute ``usual'' and the prefix ``pseudo-'' are usually omitted for usual equivalence pseudogroups.
We will also say ``normalization'' without attributes in the case of pointwise normalization in the usual sense.
\vspace{1ex}
}%
~$G^\sim$
generate all admissible point transformations%
\footnote{%
An \emph{admissible (point) transformation} of a class of differential equations is a triple of the form
$(\mathcal E,\tilde {\mathcal E},\mathcal T)$.
Here $\mathcal E$ and $\tilde {\mathcal E}$ are equations from the class or,
equivalently, the corresponding values of the arbitrary elements parameterizing the class.
The transformational part~$\mathcal T$ of the admissible transformation
is a point transformation mapping the equation~$\mathcal E$ to the equation~$\tilde {\mathcal E}$.\vspace{1ex}
}
between equations from~$\mathcal L$.
The group classification of the class of second-order linear ODEs is trivial
since its equivalence group acts transitively.
Note that the equivalence group~$G^\sim$ of the class~$\mathcal L$ with $r\geqslant2$ was first found
by St\"ackel~\cite{Stackel1893}.%
\footnote{%
Recall also the contribution by
Halphen~\cite{Halphen1878}, Laguerre~\cite{Laguerre1879}, Forsyth~\cite{Forsyth1888} and Wilczynski~\cite{Wilczynski1906}
in the study of point transformations between linear ODEs.%
}
The set of admissible transformations of any class of differential equations possesses the groupoid structure
and is called the \emph{equivalence groupoid} of this class~\cite{Bihlo2012,Popovych&Bihlo2012}.
See, e.g., \cite{Bihlo2012,Popovych&Bihlo2012,Popovych&Kuzinger&Eshraghi2010,Vaneeva&Popovych&Sophocleous2013}
for the definition of normalized classes and other related notions.
So, we can say that the equivalence groupoid~$\mathcal G^\sim$ of the class~$\mathcal L$ with $r\geqslant3$
is generated by its equivalence group~$G^\sim$.

In Section~\ref{Equivalence_groupoids} we begin the study of the class~$\mathcal L$ of $r$th order $(r\geqslant2)$ linear ODEs
with the description of its equivalence groupoid in terms of its equivalence group and normalization.
%As a rule, it suffices to present only the components of equivalence transformations
%that correspond to the dependent and the independent variables 
%since the components for arbitrary elements are then defined in a unique way. 
One can gauge arbitrary elements of the class~$\mathcal L$ by parameterized families of transformations
from~$G^\sim$, which induces mappings of the class~$\mathcal L$ onto its subclasses.
Two such gauges for arbitrary elements related to the subleading-order derivatives are well known.
They result in the rational form with the first subleading coefficient being equal to zero
(the subclass~$\mathcal L_1$)
and Laguerre--Forsyth form with the first two subleading coefficients being equal to zero
(the subclass~$\mathcal L_2$).
It appears that for $r\geqslant3$ both the subclasses~$\mathcal L_1$ and~$\mathcal L_2$ are also normalized
with respect to their equivalence groups.
Then we study two gauges for arbitrary elements related to the lowest-order derivatives, which give the first and second Arnold forms.
The corresponding subclasses are even not semi-normalized
and hence these gauges are not convenient for symmetry analysis.
Having the chain of nested normalized classes~$\mathcal L\supset\mathcal L_1\supset\mathcal L_2$ for $r\geqslant3$ 
and the associated chain of classes of homogeneous equations, which are peculiarly semi-normalized,
we can classify Lie symmetries of $r$th order linear ODEs
within the algebraic approach in three different ways, which is done in Section~\ref{Group_classification}.
Section~\ref{SectionOnGenExtEquivGroupsOfClassesOfLinearODEs} is devoted to 
the study of the generalized extended equivalence groups of the above classes of linear ODEs 
and to the improvement of normalization properties of these classes by reparameterization. 
In the final section we summarize results of the paper and discuss their connection with possible approaches
to solving group classification problems for classes of systems of linear differential equations.

\section{Equivalence groupoids of classes of linear ODEs}\label{Equivalence_groupoids}

Consider the class~$\mathcal L$ of $r$th order ($r\geqslant 2$) linear ODEs, which have the form
\begin{equation}\label{ODE}
x^{(r)}+a_{r-1}(t)x^{(r-1)}+\dots+a_1(t)x^{(1)}+a_0(t)x=b(t),
\end{equation}
where $a_{r-1}$, $\dots$, $a_1$, $a_0$ and $b$ are arbitrary smooth functions of~$t$,
$x=x(t)$ is the unknown function, $x^{(k)}=d^kx/dt^k$, $k=1,\dots,r$, and $x^{(0)}:=x$.
Below we also use the notation $x'=dx/dt$ and $x''=d^2x/dt^2$
for the first and second derivatives, respectively.
The subscripts~$t$ and~$x$ denote differentiation with respect to the corresponding variables.
We assume that all variables, functions and other values are either real or complex, i.e.,
the underlying field $\mathbb F$ is either $\mathbb F=\mathbb R$ or $\mathbb F=\mathbb C$, respectively.
We work within the local approach.

\subsection{General class}

There are two different cases for the structure of the equivalence groupoid of the entire class~$\mathcal L$
depending on the value $r$, namely $r=2$ and $r\geqslant3$.
We begin with the case $r=2$.

\begin{proposition}\label{proposition1}
The equivalence group $G^\sim$ of the class~$\mathcal L$ with $r=2$ consists of the transformations
whose projections to the variable space%
\footnote{%
There is no nontrivial gauge equivalence in all the classes of linear ODEs considered in this section 
(which is not the case for reparameterized classes studied in Section~\ref{SectionOnGenExtEquivGroupsOfClassesOfLinearODEs}). 
This is why for all equivalence transformations in Section~\ref{Equivalence_groupoids} 
we present only the components corresponding to the variables~$t$ and~$x$
since each pair of these components completely determines the corresponding transformation components for arbitrary elements.
As the form~\eqref{trans1} is the most general for all relevant equivalence transformations,
the transformation components for arbitrary elements can be derived using Fa\`a di Bruno's formula and the general Leibniz rule,
and thus they are quite cumbersome.\looseness=-1\vspace{-1ex}
}
have the form
\begin{gather}\label{trans1}
\tilde t=T(t), \quad \tilde x=X_1(t)x+X_0(t),
\end{gather}
where $T$, $X_1$ and $X_0$ are arbitrary smooth functions of~$t$ with $T_tX_1\ne0$.
\end{proposition}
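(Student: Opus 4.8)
The plan is to compute directly the constraints imposed on a point transformation in the variable space $(t,x)$ by the requirement that it map every equation of the form~\eqref{ODE} with $r=2$ to another equation of the same form. A general point transformation on $(t,x)$ has the form $\tilde t=T(t,x)$, $\tilde x=X(t,x)$ with nonvanishing Jacobian. First I would show that in fact $T$ cannot depend on~$x$: this is the standard ``linear equations stay of first order and linear in the leading derivative'' argument. Under a transformation with $T_x\ne0$, the expression for $\tilde x^{(2)}$ in terms of $x$ and its derivatives is a rational function of $x''$ whose denominator involves $T_x x''+\dots$, so after clearing denominators one gets an equation that is not polynomial of degree one in $x''$, hence not in the class; more carefully, one tracks the coefficient of $(x')^2$ and of $x''$ and finds they force $T_x=0$. (This is the step I expect to be slightly technical but entirely routine.)

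With $\tilde t=T(t)$, $T_t\ne0$, the next step is to analyze $\tilde x=X(t,x)$. Writing the prolongation, $D_t=T_t D_{\tilde t}$, so $\tilde x'=\big(X_t+X_x x'\big)/T_t$ and similarly $\tilde x''$ is a second-degree polynomial in $x'$ with coefficients built from $X$, $T$ and their derivatives, plus a term $X_x x''/T_t^{\,2}$. Substituting into $\tilde x''+\tilde a_1\tilde x'+\tilde a_0\tilde x=\tilde b$ and demanding that, after using $x''=b-a_1x'-a_0x$ to eliminate $x''$, the result be an identity in $x$, $x'$ for suitable $\tilde a_1,\tilde a_0,\tilde b$ depending only on~$t$, I collect coefficients of the independent monomials $x'^2$, $x'x$, $x'$, $x^2$, $x$, $1$. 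The coefficient of $x'^2$ gives $X_{xx}=0$, so $X$ is affine in $x$: $X=X_1(t)x+X_0(t)$. The condition $T_tX_1\ne0$ is exactly the nonvanishing of the Jacobian $T_t X_x$, so~\eqref{trans1} follows.

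Conversely I would check that every transformation of the form~\eqref{trans1} with $T_tX_1\ne0$ does map the class~$\mathcal L$ (with $r=2$) into itself: substituting $\tilde t=T(t)$, $\tilde x=X_1x+X_0$ into a target equation $\tilde x''+\tilde a_1\tilde x'+\tilde a_0\tilde x=\tilde b$ and expanding yields an equation of the form~\eqref{ODE} whose coefficients $a_1,a_0,b$ are explicit expressions in $T$, $X_1$, $X_0$, $\tilde a_1\circ T$, $\tilde a_0\circ T$, $\tilde b\circ T$ (obtained via Fa\`a di Bruno's formula and the Leibniz rule, as mentioned in the footnote), and since $T$, $X_1$, $X_0$ range over all smooth functions with $T_tX_1\ne0$ this map between tuples of arbitrary elements is onto, i.e.\ the transformation indeed sends equations of the class to equations of the class for an appropriate choice of source coefficients. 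Hence the set of such transformations is a group — it is manifestly closed under composition and inversion once the $t$- and $x$-components are known to have this triangular affine form — and it is exactly $G^\sim$. The only genuinely delicate point is the first one, ruling out $T_x\ne0$; everything after that is bookkeeping on polynomial coefficients.
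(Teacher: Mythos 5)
Your overall strategy (direct substitution, elimination of $x''$ via the source equation, and splitting with respect to powers of $x'$) is the same as the paper's, and the second half of your argument — $X_{xx}=0$ from the coefficient of $(x')^2$ once $T_x=0$ is known, then the converse check and the prolongation to the arbitrary elements — is fine. The gap is in your first step, ruling out $T_x\ne0$, and it is not merely a technicality. Your claimed mechanism is factually wrong: $\tilde x''=\bigl((DT)D(DX)-(DX)D(DT)\bigr)/(DT)^3$ with $DT=T_t+T_xx'$, so the denominator involves $x'$ but \emph{not} $x''$, and the numerator is linear in $x''$ with coefficient exactly the Jacobian $J\ne0$. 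Hence the transformed equation, after clearing denominators, is \emph{always} of degree one in $x''$; ``not polynomial of degree one in $x''$'' is never the obstruction. The true obstruction sits in the degree in $x'$: after clearing denominators and substituting $x''=b-a_1x'-a_0x$, the coefficient of $(x')^3$ is
\[
X_{xx}T_x-X_xT_{xx}+\tilde a_1X_xT_x^{\,2}+(\tilde a_0X-\tilde b)T_x^{\,3},
\]
and this can perfectly well vanish with $T_x\ne0$ for a \emph{particular} pair of equations — e.g., $\tilde t=x$, $\tilde x=t$ is a point symmetry of $x''=0$. Indeed, Proposition~\ref{proposition2} of the paper shows the class with $r=2$ is \emph{not} normalized precisely because such admissible transformations exist, so any argument that would force $T_x=0$ for a single admissible transformation must be wrong.

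What saves the proposition is that an equivalence transformation must work for \emph{all} values of the arbitrary elements $\tilde a_1$, $\tilde a_0$, $\tilde b$; one may therefore split the displayed equation with respect to them, and the coefficients of $\tilde a_1$, $\tilde a_0$, $\tilde b$ immediately give $T_x=0$. You state the correct universal quantification in your setup (``map every equation \dots to another equation of the same form''), but you never actually use it in the argument — your justification, as written, quantifies over nothing and would ``prove'' the false claim that every admissible transformation is fiber-preserving. Replace the heuristic about denominators by the explicit coefficient of $(x')^3$ and the splitting with respect to the arbitrary elements, and the proof closes; this is exactly the step where the $r=2$ case genuinely differs from $r\geqslant3$, where the analogous coefficient forces $T_x=0$ without any reference to the arbitrary elements.
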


\begin{proof}
Suppose that a point transformation $\mathcal T$ of the general form
\begin{gather}\label{general_trans}
\tilde t=T(t,x), \quad \tilde x=X(t,x),
\end{gather}
where the Jacobian $J=|\p(T,X)/\p(t,x)|$ does not vanish, $J\ne0$,
connects two fixed second-order linear ODEs~$\mathcal E$ and~$\mathcal {\tilde E}$.
We substitute the expressions for the new variables (which are with tildes) and the corresponding derivatives
in terms of the old variables (which are without tildes) into $\mathcal {\tilde E}$.
The equality obtained should be identically satisfied on solutions of the equation~$\mathcal E$.
Therefore, additionally substituting the expression for $x''$ implied by~$\mathcal E$,
we can split the equality with respect to the derivative $x'$.
Collecting the coefficients of~$(x')^3$, we obtain the equation
\[
X_{xx}T_x-X_xT_{xx}+\tilde a_1X_xT_x^{\,2}+(\tilde a_0X-\tilde b)T_x^{\,3}=0.
\]
As~$\tilde a_1$, $\tilde a_0$ and~$\tilde b$ are the only arbitrary elements involved in this equation
and we study the equivalence group, we can vary the arbitrary elements and hence split with respect to them.
Hence $T_x=0$, i.e., $T=T(t)$. 
Then the terms with~$(x')^2$ give the equation $T_tX_{xx}=0$.
As the condition $J\ne0$ reduces to the inequality $T_tX_x\ne0$, we have $X=X_1(t)x+X_0(t)$ with $X_1\ne0$.
The other determining equations, which are derived by the additional splitting with respect to $x'$ and~$x$,
define the transformation components for the arbitrary elements
as functions of the variables and the arbitrary elements.
\end{proof}

\begin{proposition}\label{proposition2}
The equivalence groupoid of the class~$\mathcal L$ of second-order linear ODEs is generated by 
compositions of transformations from the equivalence group~$G^\sim$ of this class 
with transformations from the point symmetry group of the equation $x''=0$.
Therefore, the class~$\mathcal L$ is semi-normalized but not normalized.
\end{proposition}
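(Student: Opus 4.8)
The plan is to prove the two assertions of the proposition in turn: first that every admissible transformation of the class $\mathcal L$ with $r=2$ factors through $G^\sim$ and the point symmetry group of $x''=0$, and second that this factorization is genuinely nontrivial, so that $\mathcal L$ is semi-normalized but fails to be normalized.

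For the first part I would start from the classification of second-order linear ODEs up to point equivalence: every equation of the form~\eqref{ODE} with $r=2$ is reducible by a transformation from $G^\sim$ to the elementary equation $x''=0$. (This is the $r=2$ instance of Lie's result quoted in the Introduction, and it also follows directly from Proposition~\ref{proposition1}, since the functions $T$, $X_1$, $X_0$ available there suffice to annihilate the coefficients $a_1$, $a_0$, $b$ of a given equation.) Now take an arbitrary admissible transformation $(\mathcal E,\tilde{\mathcal E},\mathcal T)$ of~$\mathcal L$. Choose transformations $\varphi\in G^\sim$ mapping $\mathcal E$ to $x''=0$ and $\tilde\varphi\in G^\sim$ mapping $\tilde{\mathcal E}$ to $x''=0$. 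Then the composition $\tilde\varphi\circ\mathcal T\circ\varphi^{-1}$ is a point transformation mapping $x''=0$ to itself, i.e.\ an element of the point symmetry group $G_0$ of $x''=0$. Consequently $\mathcal T=\tilde\varphi^{-1}\circ(\text{element of }G_0)\circ\varphi$, which is exactly a composition of transformations from $G^\sim$ with a transformation from $G_0$; this is the definition of semi-normalization.

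For the second part, the point is that $G_0$ is strictly larger than the intersection $G^\sim\cap G_0$ of $G^\sim$ with the symmetry group of $x''=0$. Indeed, $G_0$ is the well-known eight-parameter group generated by the projective action of $\mathrm{SL}(3)$ (or, in the usual presentation, by translations and scalings in $t$ and $x$, the Galilei-type shifts, the projective transformation $\tilde t=t/(1-\varepsilon t)$, and so on), whereas the transformations in $G^\sim$ restricted to act on $x''=0$ must preserve the linear fibration $\tilde x=X_1(t)x+X_0(t)$, $\tilde t=T(t)$; this cuts $G_0$ down to a proper subgroup. A concrete witness is the transformation $\tilde t=t$, $\tilde x=x/(1-\varepsilon x)$ (or any element of $G_0$ whose $\tilde t$-component depends on $x$, or whose $\tilde x$-component is nonlinear in $x$): it maps $x''=0$ to $x''=0$ but is not of the form~\eqref{trans1}, hence is not in $G^\sim$, yet it does appear as an admissible transformation of $\mathcal E=\tilde{\mathcal E}=(x''=0)$. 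Therefore the equivalence groupoid is not generated by $G^\sim$ alone, i.e.\ $\mathcal L$ is not normalized.

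The only delicate point is making sure that the map $\mathcal T\mapsto\tilde\varphi\circ\mathcal T\circ\varphi^{-1}$ really does land in $G_0$, which requires knowing that a point transformation between two equations each equal to $x''=0$ is automatically a symmetry of $x''=0$ — this is immediate — together with the fact that $x''=0$ has an eight-dimensional point symmetry pseudogroup, which is classical (Lie). Everything else is bookkeeping with compositions in the groupoid. I do not expect any real obstacle; the main thing to be careful about is keeping the distinction between $G^\sim$ (equivalence transformations of the whole class, necessarily of the special form~\eqref{trans1}) and $G_0$ (point symmetries of the single equation $x''=0$, which may be nonlinear in $x$ and mix $t$ and $x$) clearly in view, since it is precisely the gap between them that yields ``semi-normalized but not normalized''.
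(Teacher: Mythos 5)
Your overall strategy coincides with the paper's: reduce both the source and the target equation to $x''=0$ by equivalence (Arnold) transformations, observe that the conjugate $\tilde\varphi\circ\mathcal T\circ\varphi^{-1}$ is then a point symmetry of $x''=0$, and exhibit symmetries of $x''=0$ that are not of the form~\eqref{trans1} to rule out normalization. One bookkeeping remark: the decomposition $\mathcal T=\tilde\varphi^{-1}\circ\mathcal T_0\circ\varphi$ establishes the first sentence of the proposition, but to get semi-normalization in the technical sense you should further rewrite it as the composition of a single equivalence transformation with a point symmetry of the source equation itself, $\mathcal T=(\tilde\varphi^{-1}\varphi)\,(\varphi^{-1}\mathcal T_0\varphi)$, where $\varphi^{-1}\mathcal T_0\varphi$ is a symmetry of $\mathcal E$; the paper carries out exactly this conjugation, while you assert the conclusion without it.

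There is, however, a genuine error in your witness for non-normalization: the map $\tilde t=t$, $\tilde x=x/(1-\varepsilon x)$ is \emph{not} a point symmetry of $x''=0$. It sends the solution $x=at+b$ to $\tilde x=(a\tilde t+b)/(1-\varepsilon a\tilde t-\varepsilon b)$, and a direct computation gives $d^2\tilde x/d\tilde t^2=2\varepsilon a^2(1-\varepsilon a\tilde t-\varepsilon b)^{-3}\ne0$ whenever $a\ne0$. The symmetry group of $x''=0$ consists of projective transformations of the $(t,x)$-plane, i.e.\ fractional linear maps with a \emph{common} denominator in both components (they must preserve straight lines); consequently any symmetry whose $x$-component is truly fractional linear in $x$ necessarily has a $t$-component depending on $x$ as well, and there is no symmetry of the shape you wrote. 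A correct witness is, for instance, $\tilde t=t/(1-\varepsilon x)$, $\tilde x=x/(1-\varepsilon x)$, or simply the interchange $\tilde t=x$, $\tilde x=t$: each violates the condition $T_x=0$ and hence cannot be of the form~\eqref{trans1}, which is precisely the property the paper's proof invokes. Your hedged parenthetical (``any element of $G_0$ whose $\tilde t$-component depends on $x$, or whose $\tilde x$-component is nonlinear in $x$'') does contain the correct repair, but the named example as it stands would fail.
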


\begin{proof}
The free particle equation $x''=0$ admits point symmetry transformations 
that are truly fractional linear with respect to~$x$ or whose components for~$t$ depend on~$x$. 
Each of these properties is not consistent with the transformation form~\eqref{trans1}. 
This is why there exists admissible transformations in the class~$\mathcal L$ 
that are not generated by its equivalence transformations, 
i.e., this class is not normalized.

It is commonly known that any second-order linear ODE~$\mathcal E$ is locally reduced to the equation $x''=0$
by an equivalence transformation, so-called Arnold transformation,
\begin{gather}\label{EqArnoldTrans}
\tilde t=\frac{\varphi_2(t)}{\varphi_1(t)}, \quad \tilde x=\frac{x-\varphi_0(t)}{\varphi_1(t)},
\end{gather}
where $\varphi_0$ is a particular solution of the equation $\mathcal E$, $\varphi_1$ and $\varphi_2$
are linearly independent solutions of the corresponding homogeneous equation, see, e.g.,~\cite[p.~43]{Arnold1988} or \cite{Gonzalez-Lopez1988}.
In other words, the class~$\mathcal L$ is a~single orbit of its equivalence group~$G^\sim$.
Any class with this property is semi-normalized.
We show this in detail.
Consider two fixed equations $\mathcal E_1$ and $\mathcal E_2$ from the class~$\mathcal L$ with $r=2$ and
a point transformation~$\mathcal T$ linking these equations.
Let $\mathcal T_1$ and $\mathcal T_2$ be the projections of elements of $G^\sim$ that map the
equations~$\mathcal E_1$ and~$\mathcal E_2$, respectively, to $x''=0$.
Then the transformation $\mathcal T_0:=\mathcal T_2\mathcal T\mathcal T_1^{-1}$ belongs to the point symmetry group of the equation $x''=0$.
This implies the representation $\mathcal T=\mathcal T^{-1}_2\mathcal T_0\mathcal T_1$.
Roughly speaking, $\mathcal T$ is the composition of the equivalence transformations~$\mathcal T_1$ and~$\mathcal T_2^{-1}$
and the symmetry transformation~$\mathcal T_0$ of the equation $x''=0$.
It is obvious that any transformation possessing such a representation maps the equation~$\mathcal E_1$ to the equation~$\mathcal E_2$.
The above representation can be rewritten as $\mathcal T=\check{\mathcal T}\hat{\mathcal T}$, where $\check{\mathcal T}=\mathcal T_2^{-1}\mathcal T_1$
is an equivalence transformation of the class~$\mathcal L$ and $\hat{\mathcal T}=\mathcal T_1^{-1}\mathcal T_0\mathcal T_1$
is a symmetry transformation of the equation~$\mathcal E_1$, which means that the class~$\mathcal L$ is semi-normalized.
\end{proof}

\begin{remark}\label{RemarkOnFiber-PreservingAdmTransOf2ndOrderLODEs}
The equivalence group $G^\sim$ induces all fiber-preserving admissible transformations of the class~$\mathcal L$ with $r=2$.
This directly follows from the fact that imposing the constraint~$T_x=0$ for admissible transformations also implies the condition~$X_{xx}=0$.
\end{remark}

In what follows we consider the class~$\mathcal L$ with $r\geqslant3$.
Although the projections of transformations from the equivalence group to the variable space
in the case~$r\geqslant3$ coincide with that in the case~$r=2$,
the corresponding equivalence groupoids have different structures.

\begin{proposition}\label{proposition3}
The equivalence group $G^\sim$ of the class~$\mathcal L$, where $r\geqslant3$, consists of the transformations
whose projections to the variable space have the form~\eqref{trans1}.
This group generates the entire equivalence groupoid of the class~$\mathcal L$, i.e., the class~$\mathcal L$ is normalized.
\end{proposition}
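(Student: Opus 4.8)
The plan is to run essentially the same determining-equation computation as in the proof of Proposition~\ref{proposition1}, but now keeping careful track of the terms that vanish identically for $r=2$ yet survive for $r\geqslant3$. First I would take a point transformation~$\mathcal T$ of the general form~\eqref{general_trans} with $J\ne0$ linking two fixed equations $\mathcal E$ and~$\tilde{\mathcal E}$ from~$\mathcal L$, substitute the tilded variables and their derivatives in terms of the untilded ones into~$\tilde{\mathcal E}$, use~$\mathcal E$ to eliminate~$x^{(r)}$, and split the resulting identity with respect to the remaining derivatives $x', \dots, x^{(r-1)}$ and with respect to~$x$. As in Proposition~\ref{proposition1}, since the arbitrary elements $\tilde a_{r-1},\dots,\tilde a_0,\tilde b$ appear and we are computing the equivalence group, I can additionally split with respect to them. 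The highest-weight terms should again force $T_x=0$ and then $X_{xx}=0$, so that the projection to the variable space has the form~\eqref{trans1} with $T_tX_1\ne0$; the remaining determining equations then express the transformation components for the arbitrary elements as (cumbersome but explicit) functions of~$t$, $X_1$, $X_0$, $T$ and their derivatives, which establishes the first assertion.

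For the normalization claim the key point is that, unlike the case $r=2$, there is \emph{no} analogue of the Arnold reduction to $x^{(r)}=0$ that would introduce admissible transformations outside~$G^\sim$. Concretely, I would argue as follows. Let $\mathcal T$ be an arbitrary admissible transformation of~$\mathcal L$, of the general form~\eqref{general_trans}, connecting $\mathcal E$ and $\tilde{\mathcal E}$. Now we may \emph{not} vary the arbitrary elements, since they are fixed, but we can still split the compatibility identity with respect to the jet variables $x', \dots, x^{(r-1)}, x$, which are genuinely independent on the manifold of $\mathcal E$ once $x^{(r)}$ has been substituted. The crucial observation is that for $r\geqslant3$ there are strictly more such splitting variables than in the $r=2$ case, and the coefficients of the top-degree monomials in $x^{(r-1)}$ (and, after that is resolved, in lower derivatives) yield purely differential-geometric conditions on $T$ and $X$ that do not involve the arbitrary elements at all, in particular the conditions $T_x=0$ and $X_{xx}=0$. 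Thus every admissible transformation already has the form~\eqref{trans1}, and hence is the projection of an element of~$G^\sim$; composed with the fact that $G^\sim$ consists exactly of such transformations, this gives $\mathcal G^\sim = G^\sim$, i.e.\ $\mathcal L$ is normalized.

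The main obstacle, and the place where the hypothesis $r\geqslant3$ is really used, is exactly this splitting of the compatibility identity \emph{without} freedom to vary the arbitrary elements: one must show that the constraints $T_x=0$, $X_{xx}=0$ follow from the coefficients of the high-order derivative monomials alone. I expect that after substituting $x^{(r)} = b - a_{r-1}x^{(r-1)} - \dots - a_0 x$ into the transformed equation, the coefficient of the highest remaining power of $x^{(r-1)}$ is a polynomial expression in $T_x$, $X_x$ and their derivatives (a contraction of Fa\`a di Bruno type terms), which vanishes only if $T_x=0$; and that once $T_x=0$ is imposed, the next coefficient forces $X_{xx}=0$ — this is precisely the mechanism underlying Remark~\ref{RemarkOnFiber-PreservingAdmTransOf2ndOrderLODEs}, except that for $r\geqslant3$ it works even \emph{without} first assuming fiber-preservation, because the extra derivative orders supply enough independent splitting equations. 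Carrying out this computation cleanly (probably by exhibiting the relevant coefficients via the general formulas for prolongations of point transformations, and noting the sign/structure that makes them nondegenerate) is the technical heart of the argument; the rest is the routine bookkeeping already illustrated for $r=2$.
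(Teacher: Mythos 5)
Your proposal is correct and follows essentially the same route as the paper: the paper likewise determines the whole equivalence groupoid in one pass by splitting the compatibility identity only with respect to the jet variables, finding that the coefficient of the monomial $x''x^{(r-1)}$ equals $-J(DT)^{-r-2}T_x\bigl(3+\tfrac12(r-2)(r+3)\bigr)$ and the coefficient of $x'x^{(r-1)}$ (after $T_x=0$) equals $rT_t^{-r}X_{xx}$, neither of which involves the arbitrary elements, exactly confirming the mechanism you anticipate for $r\geqslant3$. The only difference is organizational: the paper does not run a separate equivalence-group computation first, since the description of all admissible transformations already yields the group.
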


\begin{proof}
In order to study admissible transformations in the class~$\mathcal L$, we consider a pair of equations from this class,
namely an equation~$\mathcal E$ of the form~\eqref{ODE}
and an equation~$\tilde{\mathcal E}$ of the same form, where all variables, derivatives and arbitrary elements are with tildes,
\noprint{
\begin{equation}\label{tildeODE}
\tilde x^{(r)}+\tilde{a}_{r-1}(\tilde t)\tilde x^{(r-1)}+\dots+\tilde{a}_1(\tilde t)\tilde x'+\tilde{a}_0(\tilde t)\tilde x=\tilde{b}(\tilde t),
\end{equation}
}%
and assume that these equations are connected by a point transformation $\mathcal T$ of the general form~\eqref{general_trans}.
At first we express derivatives with tildes in terms of the variables without tildes,
\begin{gather*}
\tilde x^{(k)}=\left(\frac{1}{DT}D\right)^kX,
\end{gather*}
where $D=\p_t+x'\p_x+x'' \p_{x'}+\cdots$ is the total derivative operator with respect to the variable~$t$.
After substituting the expressions for the variables and derivatives with tildes into~$\tilde{\mathcal E}$,
we derive an equation in the variables without tildes.
It should be an identity on the manifold determined by~$\mathcal E$ 
in the $r$th order jet space with the independent variable~$t$ and the dependent variable~$x$.
The coefficient of $x''x^{(r-1)}$ in this equation equals
\[
-\frac{J}{(DT)^{r+2}}T_x\left(3+\frac{(r-2)(r+3)}2\right)=0,
\]
and hence $T_x=0$, i.e., the function $T$ does not depend on the variable $x$, $T=T(t)$.
The nondegeneracy condition $J\ne0$ is simplified to $T_tX_x\ne0$.
Taking into account the condition $T_x=0$, we collect coefficients of $x'x^{(r-1)}$, which gives $rT_t^{-r}X_{xx}=0$.
This equation implies that $X_{xx}=0$, i.e., $X$ is an affine function of~$x$, $X=X_1(t)x+X_0(t)$.
Therefore, the transformation~$\mathcal T$ has the form~\eqref{trans1}.
The other determining equations, which are obtained by splitting with respect to derivatives of $x$
after substituting for~$x^{(r)}$ in view of~$\mathcal E$,
establish the relation between arbitrary elements of the initial and the transformed equations.

The transformation~$\mathcal T$ maps any equation from the class~$\mathcal L$ to another equation from the same class,
and its prolongation to the arbitrary elements $a_{r-1}$,~$\dots$, $a_0$ and~$b$,
which is given by the above relation,
is a point transformation in the joint space of the variables and the arbitrary elements.
Hence such prolongations of the transformations of the form~\eqref{trans1} constitute 
the (usual) equivalence group~$G^\sim$ of the class~$\mathcal L$.
Since any admissible transformation in the class~$\mathcal L$ is induced by a transformation from~$G^\sim$, 
this class is normalized.
\end{proof}

Consider the corresponding subclass~$\widehat{\mathcal L}$ of $r$th order ($r\geqslant2$) linear homogeneous ODEs,
which is singled out from the class~$\mathcal L$ by the constraint~$b=0$.
The arbitrary element~$b$ can be gauge to zero by equivalence transformations.
Namely, the class~$\mathcal L$ is mapped to its subclass~$\widehat{\mathcal L}$ by a family of point transformations with
$T=t$, $X^1=1$ and $X^0$ being a particular solution of the initial equation,
and thus these transformations are parameterized by~$b$.

\begin{corollary}\label{CorollaryOnEquivGroupOfLinHomogenODEs}
The equivalence group~$\widehat G^\sim$ of the subclass~$\widehat{\mathcal L}$
is obtained from the equivalence group~$G^\sim$ of the class~$\mathcal L$
by setting $X_0=0$ and neglecting the transformation component for~$b$.
\end{corollary}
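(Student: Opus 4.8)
The plan is to deduce Corollary~\ref{CorollaryOnEquivGroupOfLinHomogenODEs} directly from Proposition~\ref{proposition3} together with the definition of an equivalence group applied to the subclass~$\widehat{\mathcal L}$. First I would recall that the subclass~$\widehat{\mathcal L}$ is carved out of~$\mathcal L$ by the single constraint $b=0$, so that an element of~$\widehat{\mathcal L}$ is parameterized by the tuple $(a_{r-1},\dots,a_0)$ alone. A transformation from~$\widehat G^\sim$ is, by definition, a point transformation in the joint space of the variables and the arbitrary elements $(a_{r-1},\dots,a_0)$ whose prolongation to that space maps every equation of~$\widehat{\mathcal L}$ to an equation of~$\widehat{\mathcal L}$.

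The key step is the observation that any admissible transformation within~$\widehat{\mathcal L}$ is, in particular, an admissible transformation within the larger class~$\mathcal L$ (a homogeneous equation is a special linear equation, and the image is again homogeneous, hence lies in~$\mathcal L$). By the normalization established in Proposition~\ref{proposition3}, this admissible transformation is induced by an element of~$G^\sim$, so its variable-space projection has the form~\eqref{trans1}, $\tilde t=T(t)$, $\tilde x=X_1(t)x+X_0(t)$ with $T_tX_1\ne0$. It then remains to read off from the determining relations for the arbitrary elements (obtained in the proof of Proposition~\ref{proposition3} by substituting for $x^{(r)}$ and splitting with respect to the remaining derivatives of~$x$) the condition under which the transformed free term~$\tilde b$ vanishes. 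Since the free term transforms linearly and inhomogeneously in $X_0$ — concretely, the inhomogeneous part produced by applying the transformed operator to $X_0(t)$ must be cancelled — the requirement $\tilde b=0$ whenever $b=0$ forces $X_0$ to be a solution of the homogeneous equation; but then both $\tilde b\equiv0$ and $b\equiv0$, and conversely $X_0=0$ certainly guarantees this. One checks that taking $X_0=0$ is in fact necessary: if $X_0$ were a nonzero solution of a particular homogeneous equation it would fail to map other homogeneous equations (with different coefficients) to homogeneous ones, since then $X_0$ is no longer a solution and $\tilde b\ne0$. Hence every element of~$\widehat G^\sim$ has variable-space projection~\eqref{trans1} with $X_0=0$, and its component for the (now absent) arbitrary element~$b$ is simply dropped; conversely every such transformation clearly preserves~$\widehat{\mathcal L}$ and therefore lies in~$\widehat G^\sim$.

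The main obstacle, such as it is, is purely bookkeeping: one must make precise that the restriction $X_0=0$ is \emph{forced} rather than merely sufficient, which requires invoking that $\widehat G^\sim$ must preserve the \emph{whole} subclass, not a single equation, and therefore cannot exploit the special structure of any particular homogeneous equation. Once this is said, the statement follows, and the fact that $\widehat{\mathcal L}$ is itself normalized (its equivalence group generates its equivalence groupoid) is inherited from the normalization of~$\mathcal L$ by the same argument: any admissible transformation of~$\widehat{\mathcal L}$ is admissible in~$\mathcal L$, hence of the form~\eqref{trans1}, hence — by the preservation of homogeneity just analyzed — lies in~$\widehat G^\sim$.
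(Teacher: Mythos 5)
Your derivation of the corollary itself is essentially the intended one: restrict to $b=0$, use the already established form~\eqref{trans1}, and observe that an equivalence transformation must preserve \emph{every} equation of the subclass, so the shift must be a common solution of all homogeneous equations and hence vanish. Two small points: the quantity that must solve the source equation is the ratio $X_0/X_1$, not $X_0$ itself (write $\tilde x=X_1(x+X_0/X_1)$ and note that $\tilde x=0$ must pull back to a solution), and your argument, resting on Proposition~\ref{proposition3}, covers only $r\geqslant3$, whereas the corollary is also used for $r=2$; there one falls back on Proposition~\ref{proposition1} (the splitting with respect to $\tilde a_1,\tilde a_0$ still goes through when $b$ is frozen at zero).

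The genuine problem is your closing claim that $\widehat{\mathcal L}$ is normalized, ``inherited from the normalization of $\mathcal L$.'' This is false, and it contradicts both Corollary~\ref{CorollaryOnEquivGroupoidOfLinHomogenODEsOfOrderGreaterThan2} and the discussion following it: an admissible transformation of a \emph{single} equation $\mathcal E\in\widehat{\mathcal L}$ only needs $X_0/X_1$ to be a solution of that one equation $\mathcal E$, and nonzero such solutions always exist; these transformations are not of the form~\eqref{trans1} with $X_0=0$ and hence are not generated by $\widehat G^\sim$. Your own necessity argument already isolates the distinction — an equivalence transformation must work uniformly over the whole subclass, an admissible transformation need not — so the last paragraph quietly conflates the two notions it was careful to separate. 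The correct statement is that $\widehat{\mathcal L}$ is only (uniformly) semi-normalized: every admissible transformation factors as a linear-superposition symmetry of the source equation composed with an element of $\widehat G^\sim$. Since this claim lies outside the corollary being proved, the proof of the corollary stands, but the erroneous addendum should be deleted.
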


\begin{corollary}\label{CorollaryOnEquivGroupoidOfLinHomogenODEsOfOrder2}
The subclass~$\widehat{\mathcal L}$ with $r=2$ is a single orbit of the free particle equation $x''=0$
under the action of the equivalence group~$\widehat G^\sim$.
Hence the subclass~$\widehat{\mathcal L}$ is semi-normalized but not normalized.
\end{corollary}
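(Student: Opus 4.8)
The plan is to obtain the single-orbit property from a specialization of the Arnold transformation, and then to read off semi-normalization and the failure of normalization by essentially repeating the argument of the proof of Proposition~\ref{proposition2}. First I would recall that, as shown there, every second-order linear ODE is mapped to the free particle equation $x''=0$ by an Arnold transformation~\eqref{EqArnoldTrans}. For an equation from the homogeneous subclass~$\widehat{\mathcal L}$, that is,~\eqref{ODE} with $r=2$ and $b=0$, one may take the particular solution $\varphi_0=0$, so that~\eqref{EqArnoldTrans} becomes $\tilde t=\varphi_2/\varphi_1$, $\tilde x=x/\varphi_1$ with $\varphi_1,\varphi_2$ linearly independent solutions of the equation itself. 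This transformation is of the form~\eqref{trans1} with $X_0=0$, hence by Corollary~\ref{CorollaryOnEquivGroupOfLinHomogenODEs} it is (the projection of) an element of~$\widehat G^\sim$; it preserves homogeneity since solutions of the equation are sent to affine functions of~$\tilde t$. As $x''=0$ lies in~$\widehat{\mathcal L}$ and $\widehat G^\sim$ consists of invertible transformations, this shows that $\widehat{\mathcal L}$ is a single orbit of $x''=0$ under~$\widehat G^\sim$.

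For semi-normalization I would adapt the relevant part of the proof of Proposition~\ref{proposition2}, replacing $\mathcal L$, $G^\sim$ and the Arnold transformations by $\widehat{\mathcal L}$, $\widehat G^\sim$ and the reduced transformations above: given $\mathcal E_1,\mathcal E_2\in\widehat{\mathcal L}$ and a point transformation~$\mathcal T$ between them, choose projections $\mathcal T_1,\mathcal T_2$ of elements of~$\widehat G^\sim$ that send $\mathcal E_1$ and $\mathcal E_2$ to $x''=0$; then $\mathcal T_0:=\mathcal T_2\mathcal T\mathcal T_1^{-1}$ is a point symmetry of $x''=0$, and the identity $\mathcal T=(\mathcal T_2^{-1}\mathcal T_1)(\mathcal T_1^{-1}\mathcal T_0\mathcal T_1)$ exhibits~$\mathcal T$ as a composition of an equivalence transformation of~$\widehat{\mathcal L}$ with a point symmetry of~$\mathcal E_1$. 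For the failure of normalization I would again note that $x''=0\in\widehat{\mathcal L}$ possesses point symmetries that are genuinely fractional linear with respect to~$x$ or have a $t$-component depending on~$x$; none of these is of the form~\eqref{trans1}, let alone with $X_0=0$, so they are not induced by~$\widehat G^\sim$, whence the subclass is not normalized.

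I do not anticipate a real obstacle: everything rests on Proposition~\ref{proposition2} and Corollary~\ref{CorollaryOnEquivGroupOfLinHomogenODEs}, and the only point deserving a moment's care is checking that the specialized Arnold transformation actually lands in~$\widehat{\mathcal L}$ rather than in an inhomogeneous equation, so that it genuinely represents an element of~$\widehat G^\sim$ and not merely of~$G^\sim$; this is immediate from the remark about affine images of solutions made above (and, if one wishes, one also verifies $T_tX_1=W/\varphi_1^{\,3}\ne0$, where $W$ is the Wronskian of $\varphi_1,\varphi_2$, so that the transformation is indeed nondegenerate).
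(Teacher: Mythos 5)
Your proof is correct and follows exactly the route the paper intends for this corollary: specialize the Arnold transformation~\eqref{EqArnoldTrans} to $\varphi_0=0$ so that it becomes a projection of an element of~$\widehat G^\sim$ (via Corollary~\ref{CorollaryOnEquivGroupOfLinHomogenODEs}), then transplant the semi-normalization decomposition and the non-normalization argument verbatim from the proof of Proposition~\ref{proposition2}. The check that $T_tX_1=W/\varphi_1^{\,3}\ne0$ is a nice touch but not strictly needed, since the Arnold transformation is already known to be invertible.
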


\begin{corollary}\label{CorollaryOnEquivGroupoidOfLinHomogenODEsOfOrderGreaterThan2}
Given any equation~$\mathcal E$ from the subclass~$\widehat{\mathcal L}$ with $r\geqslant3$,
a point transformation maps~$\mathcal E$ to another equation from the same subclass
if and only this transformation has the form~\eqref{trans1}, where the ratio~$X_0/X_1$ is a solution of~$\mathcal E$.
\end{corollary}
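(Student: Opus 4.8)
The plan is to deduce the form~\eqref{trans1} of any such transformation from the already established normalization of~$\mathcal L$, and then to pin down the extra condition on~$X_0/X_1$ by factoring the transformation through a pure shift of the dependent variable.

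First I would observe that if a point transformation~$\mathcal T$ maps an equation $\mathcal E\in\widehat{\mathcal L}$ to an equation $\tilde{\mathcal E}\in\widehat{\mathcal L}$, then, since $\widehat{\mathcal L}\subset\mathcal L$ and the class~$\mathcal L$ with $r\geqslant3$ is normalized by Proposition~\ref{proposition3}, the transformation~$\mathcal T$ is the projection to the variable space of an element of~$G^\sim$ and therefore has the form~\eqref{trans1} with $T_tX_1\ne0$. It then remains to characterize, among transformations of the form~\eqref{trans1}, those mapping a fixed $\mathcal E\in\widehat{\mathcal L}$ into~$\widehat{\mathcal L}$.

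Next I would write such a transformation as the composition $\mathcal T=\mathcal T_1\circ\mathcal T_0$, where $\mathcal T_0\colon(t,x)\mapsto(t,\,x+\psi)$ with $\psi:=X_0/X_1$ and $\mathcal T_1\colon(t,x)\mapsto\bigl(T(t),\,X_1(t)x\bigr)$; a direct check confirms that the composition is exactly~\eqref{trans1}. Writing $\mathcal E$ in the operator form $L[x]=0$ with $L:=\p_t^{\,r}+a_{r-1}\p_t^{\,r-1}+\dots+a_0$ and substituting the expression $x=\tilde x-\psi$ for the old dependent variable, the linearity of~$L$ shows that~$\mathcal T_0$ sends~$\mathcal E$ to the equation $L[\tilde x]=L[\psi]$, i.e.\ to an equation of the class~$\mathcal L$ whose inhomogeneous term equals~$L[\psi]$; this image belongs to~$\widehat{\mathcal L}$ if and only if $L[\psi]=0$, that is, if and only if $\psi=X_0/X_1$ is a solution of~$\mathcal E$. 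On the other hand, $\mathcal T_1$ is of the form~\eqref{trans1} with vanishing additive part, so by Corollary~\ref{CorollaryOnEquivGroupOfLinHomogenODEs} it is the projection of an element of the equivalence group~$\widehat G^\sim$ of~$\widehat{\mathcal L}$; in particular, both $\mathcal T_1$ and $\mathcal T_1^{-1}$ map~$\widehat{\mathcal L}$ into itself.

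Putting these together, $\mathcal T(\mathcal E)=\mathcal T_1\bigl(\mathcal T_0(\mathcal E)\bigr)$ lies in~$\widehat{\mathcal L}$ if and only if $\mathcal T_0(\mathcal E)$ does (apply $\mathcal T_1$, respectively $\mathcal T_1^{-1}$), and the latter holds if and only if $X_0/X_1$ solves~$\mathcal E$; this yields both implications of the corollary. I do not expect any genuinely hard step here: the only points requiring care are keeping the order of composition straight in $\mathcal T=\mathcal T_1\circ\mathcal T_0$ and making sure Proposition~\ref{proposition3} is invoked where needed — this is precisely where the hypothesis $r\geqslant3$ enters, since for $r=2$ the equivalence groupoid of~$\widehat{\mathcal L}$ is instead described by Corollary~\ref{CorollaryOnEquivGroupoidOfLinHomogenODEsOfOrder2}.
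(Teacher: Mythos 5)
Your proof is correct. The paper states this result as a corollary without an explicit argument, and your two steps are exactly what is needed: Proposition~\ref{proposition3} (normalization of~$\mathcal L$ for $r\geqslant3$) forces the form~\eqref{trans1}, and then the factorization $\mathcal T=\mathcal T_1\circ\mathcal T_0$ — with the shift $\mathcal T_0$ producing the inhomogeneous term $L[X_0/X_1]$ by linearity, and the multiplicative part $\mathcal T_1$ together with its inverse preserving $\widehat{\mathcal L}$ via Corollary~\ref{CorollaryOnEquivGroupOfLinHomogenODEs} — gives both implications. For comparison, the style of argument the authors use for the analogous Propositions~\ref{proposition6} and~\ref{proposition8} would here run: $\tilde{\mathcal E}$ is homogeneous if and only if $\tilde x\equiv0$ is one of its solutions, which under~\eqref{trans1} pulls back to $x=-X_0/X_1$ being a solution of~$\mathcal E$, and by linearity and homogeneity of~$\mathcal E$ this is equivalent to $X_0/X_1$ being a solution. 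The two arguments are the same observation in different clothing; yours makes the role of the inhomogeneous term explicit, while the solution-tracking version is a one-liner. Both correctly use $r\geqslant3$ only through Proposition~\ref{proposition3}.
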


In other words, the transformational part of any admissible transformation
within the class~$\widehat{\mathcal L}$ with $r\geqslant3$ can be represented
as the composition of a linear superposition symmetry transformation of the initial equation
with the projection of an element of the equivalence group~$\widehat G^\sim$ to the variable space.
For all equations from~$\widehat{\mathcal L}$, the associated groups of linear superposition symmetry transformations
are of the same structure. In particular, they are commutative and $r$-dimensional.
Therefore, although the class of~$\widehat{\mathcal L}$ is not normalized, it is semi-normalized in a quite specific way,
which is a particular case of so-called \emph{uniform semi-normalization}.%
\footnote{%
Similar properties are known for classes of homogeneous linear PDEs
whose corresponding classes of (in general, inhomogeneous) linear PDEs are normalized~\cite{Popovych2008}.
}
For short, in similar situations we will say that a class is uniformly semi-normalized with respect to linear superposition of solutions.

\subsection{Rational form}

Using parameterized families of transformations from the equivalence group $G^\sim$, we can gauge arbitrary elements of the class~$\mathcal L$.
For example, we can set $a_{r-1}=0$. This gauge can be realized by the parameterized family of projections of equivalence transformations to the $(t,x)$-space
\begin{gather}\label{EqTransToRationalForm}
\tilde t=t, \quad \tilde x=\exp\left(\frac{1}{r}\int a_{r-1}(t)dt\right)x,
\end{gather}
which maps the class~$\mathcal L$ onto the subclass $\mathcal L_1$ of equations in the \textit{rational form}
\begin{equation}\label{ODE1}
x^{(r)}+a_{r-2}(t)x^{(r-2)}+\dots+a_1(t)x'+a_0(t)x=b(t),
\end{equation}
where we omitted tildes over the variables and the arbitrary elements.
This form was used in \cite{Krause&Michel,Mahomed&Leach1990}
for the group classification of linear ODEs within the framework of the standard ``compatibility'' approach.

\begin{proposition}\label{proposition4}
The equivalence group~$G^\sim_1$ of the subclass~$\mathcal L_1$
consists of the transformations whose projections to the variable space have the form
\begin{gather}\label{trans2}
\tilde t=T(t), \quad \tilde x=C(T_t(t))^{\frac{r-1}2}x+X_0(t),
\end{gather}
where $T$ and $X_0$ are arbitrary smooth functions of~$t$ with $T_t\ne0$,%
\footnote{%
For even~$r$, the power of~$T_t$ is half-integer
and hence the absolute value of~$T_t$ should be substituted instead of~$T_t$ in the real case
or a branch of square root should be fixed in the complex case.\label{FootnoteOnPowersOfTt}
}
and $C$ is an arbitrary nonzero constant.
The subclass~$\mathcal L_1$ with $r=2$ is semi-normalized.
If $r\geqslant3$, then the group~$G^\sim_1$ generates the equivalence groupoid of this subclass, i.e., the subclass is normalized.
\end{proposition}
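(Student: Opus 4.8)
The plan is to derive the form~\eqref{trans2} of the equivalence group~$G^\sim_1$ from the already-established structure of~$G^\sim$ (Proposition~\ref{proposition3}), and then handle the three assertions about $\mathcal L_1$ separately. First I would note that $\mathcal L_1$ is a subclass of~$\mathcal L$, so every admissible transformation within~$\mathcal L_1$ is in particular an admissible transformation within~$\mathcal L$; hence, by normalization of~$\mathcal L$ for $r\geqslant3$, its transformational part has the form~\eqref{trans1}, namely $\tilde t=T(t)$, $\tilde x=X_1(t)x+X_0(t)$ with $T_tX_1\ne0$. The task is then to impose the constraint that both the source and target equations lie in the rational form~\eqref{ODE1}, i.e.\ both have vanishing first subleading coefficient. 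Substituting~\eqref{trans1} into a rational-form equation and computing the coefficient of~$x^{(r-1)}$ in the transformed equation (which must also vanish), one finds a first-order linear ODE relating~$X_1$ to~$T_t$; the standard computation gives $(r-1)\dot T_t/(2T_t)+\dot X_1/X_1=0$ up to the contribution of the original~$a_{r-1}=0$ term, whence $X_1=C(T_t)^{(r-1)/2}$ for a nonzero constant~$C$. This yields~\eqref{trans2}. For the equivalence group one also checks $X_0$ remains free (it only enters the lower-order arbitrary elements, not the $x^{(r-1)}$-coefficient), and the footnote~\ref{FootnoteOnPowersOfTt} caveat about even~$r$ is recorded.

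Next, for the normalization claim when $r\geqslant3$: since any admissible transformation of~$\mathcal L_1$ is induced by an element of~$G^\sim$ whose variable-space projection, after imposing the rational-form constraint on both ends, is forced into the form~\eqref{trans2}, and since each such transformation prolongs to a point transformation in the joint space of variables and arbitrary elements that preserves~$\mathcal L_1$, these prolongations constitute exactly~$G^\sim_1$ and generate the whole equivalence groupoid. So $\mathcal L_1$ is normalized for $r\geqslant3$, by the same argument pattern as in the proof of Proposition~\ref{proposition3}.

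For $r=2$, normalization fails for the same reason as in Proposition~\ref{proposition2}: the rational form of second-order linear ODEs is $x''+a_0(t)x=b(t)$, and the whole subclass~$\mathcal L_1$ is still a single orbit of its equivalence group~$G^\sim_1$ (one can reduce any such equation to $x''=0$ by a transformation of the form~\eqref{trans2}, using solutions of the homogeneous equation; the half-integer power $(T_t)^{1/2}$ is exactly the Wronskian-type factor that keeps the equation in rational form). Being a single orbit of its equivalence group, $\mathcal L_1$ is semi-normalized by the abstract argument spelled out in the proof of Proposition~\ref{proposition2}: for equations $\mathcal E_1,\mathcal E_2\in\mathcal L_1$ linked by~$\mathcal T$, choose $\mathcal T_i\in G^\sim_1$ mapping $\mathcal E_i$ to $x''=0$, set $\mathcal T_0=\mathcal T_2\mathcal T\mathcal T_1^{-1}\in\mathrm{Sym}(x''=0)$, and rewrite $\mathcal T=(\mathcal T_2^{-1}\mathcal T_1)(\mathcal T_1^{-1}\mathcal T_0\mathcal T_1)$ as an equivalence transformation composed with a symmetry of~$\mathcal E_1$. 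It is not normalized because $x''=0$ has point symmetries (fractional-linear in~$x$, or with $t$-component depending on~$x$) incompatible with~\eqref{trans2}.

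The main obstacle is the coefficient extraction in the first step: one must be confident that the coefficient of~$x^{(r-1)}$ in the transformed equation, after substituting $\tilde x^{(k)}=((DT)^{-1}D)^kX$ with $X=X_1x+X_0$ and $T=T(t)$, produces precisely the ODE $(r-1)T_{tt}X_1+2T_tX_{1t}=0$ (the $a_{r-1}=0$ term contributing nothing new since it already vanishes), and that no lower-derivative terms feed back into it. This is a Faà di Bruno / Leibniz-rule bookkeeping computation of the same flavour as the $x''x^{(r-1)}$ and $x'x^{(r-1)}$ coefficients handled in Proposition~\ref{proposition3}, so while somewhat tedious it is routine; everything else follows from the already-proven results by the orbit/semi-normalization machinery.
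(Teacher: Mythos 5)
Your proof follows the paper's essentially verbatim: reduce to the form~\eqref{trans1} (via Proposition~\ref{proposition3} for $r\geqslant3$), extract the coefficient of $x^{(r-1)}$ to force $X_1=CT_t{}^{(r-1)/2}$, then obtain normalization for $r\geqslant3$ by prolonging to the arbitrary elements and semi-normalization for $r=2$ by the single-orbit argument of Proposition~\ref{proposition2}. Two small points: the first-order ODE you write, $(r-1)T_{tt}/(2T_t)+X_{1,t}/X_1=0$, integrates to $X_1=CT_t{}^{-(r-1)/2}$ and is therefore inconsistent with your (correct) conclusion --- the coefficient of $x^{(r-1)}$ is in fact $r\frac{X_1}{T_t^{\,r}}\bigl(\frac{X_{1,t}}{X_1}-\frac{r-1}{2}\frac{T_{tt}}{T_t}\bigr)$, giving $X_{1,t}/X_1=\frac{r-1}{2}T_{tt}/T_t$; and for $r=2$ the form~\eqref{trans2} of $G^\sim_1$ cannot be deduced from normalization of~$\mathcal L$ (which fails there), so before invoking it in your orbit argument one must rerun the direct computation of Proposition~\ref{proposition1} on the subclass~$\mathcal L_1$, as the paper does.
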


\begin{proof}
In the case $r=2$ we follow the proof of Proposition~\ref{proposition1}
and derive the form~\eqref{trans1} for equivalence transformations of the subclass~$\mathcal L_1$.
Then further collecting coefficients of~$x'$ and~$x$ gives the equation
\begin{gather*}
\frac{X_1}{T_t^{\,2}}\left(\frac{X_{1,t}}{X_1}-\frac12\frac{T_{tt}}{T_t}\right)=0,
\end{gather*}
which is integrated to the relation $\smash{X_1=CT_t{}^{\frac12}}$ with an arbitrary nonzero constant~$C$,
as well as the equivalence transformation components for the arbitrary elements~$a_0$ and~$b$
of the subclass~$\mathcal L_1$ with $r=2$.
The semi-normalization of this subclass is proved in the same way as Proposition~\ref{proposition2}.

In the case $r\geqslant3$ we describe the entire equivalence groupoid.
Suppose that an equation~$\mathcal E$ of the form~\eqref{ODE1}
and an equation~$\tilde{\mathcal E}$ of the same form, where all variables, derivatives and arbitrary elements are with tildes,
are connected by a point transformation~$\mathcal T$.
In view of Proposition~\ref{proposition3} this transformation has the form~\eqref{trans1}.
We express the variables with tildes and the corresponding derivatives in terms of the variables and derivatives without tildes,
substitute the obtained expressions into~$\tilde{\mathcal E}$
and collect terms containing the derivative~$x^{(r-1)}$, which gives
\[
r\frac{X_1}{T_t^{\,r}}\left(\frac{X_{1,t}}{X_1}-\frac{r-1}2\frac{T_{tt}}{T_t}\right)x^{(r-1)}.
\]
The coefficient of~$x^{(r-1)}$ vanishes only if $\smash{X_1=CT_t{}^{\frac{r-1}2}}$, where $C$ is an arbitrary nonzero constant.
Therefore, the point transformation $\mathcal T$ has the form~\eqref{trans2}.
Analogously to the end of the proof of Proposition~\ref{proposition3},
the transformations of this form when prolonged to the arbitrary elements $a_{r-2}$,~$\dots$, $a_0$ and~$b$
constitute the equivalence group $G^\sim_1$ of the subclass~$\mathcal L_1$.
\end{proof}

The status and properties of the corresponding subclass~$\widehat{\mathcal L}_1$ of homogeneous equations within the class~$\mathcal L_1$
are the same as those of the subclass~$\widehat{\mathcal L}$ within the class~$\mathcal L$.

\begin{corollary}\label{CorollaryOnEquivGroupOfLinHomogenODEsInRationalForm}
The equivalence group~$\widehat G^\sim_1$ of the subclass~$\widehat{\mathcal L}_1$
is derived from the equivalence group~$G^\sim_1$ of the class~$\mathcal L_1$
by setting $X_0=0$ and neglecting the transformation component for~$b$.
\end{corollary}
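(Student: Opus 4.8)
The plan is to imitate the reasoning behind Corollary~\ref{CorollaryOnEquivGroupOfLinHomogenODEs}, exploiting that $\widehat{\mathcal L}_1$ is cut out from $\mathcal L_1$ by the single constraint $b=0$. First I would show that the projection to the $(t,x)$-space of any transformation from $\widehat G^\sim_1$ has the form~\eqref{trans2}. For $r\geqslant3$ this is immediate: such a transformation connects two equations of $\widehat{\mathcal L}_1\subset\mathcal L_1$, hence it is an admissible transformation of the normalized subclass $\mathcal L_1$, so Proposition~\ref{proposition4} applies. For $r=2$ I would instead carry out the splitting directly, as in the proofs of Propositions~\ref{proposition1} and~\ref{proposition4}: substituting the general form~\eqref{general_trans} into a transformed homogeneous second-order equation, splitting with respect to~$x'$ and then with respect to the arbitrary element, one gets $T_x=0$, then $X_{xx}=0$, hence $X=X_1(t)x+X_0(t)$, and collecting the coefficients of~$x'$ and~$x$ gives $X_1=C(T_t)^{1/2}$.

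The next and key step is to force $X_0=0$. A point transformation of the form~\eqref{trans2} from an equation $\mathcal E$ of~$\widehat{\mathcal L}_1$ to an equation $\widetilde{\mathcal E}$ maps graphs of solutions of~$\mathcal E$ to graphs of solutions of~$\widetilde{\mathcal E}$; applying this to the zero solution $x\equiv0$ shows that the inhomogeneity of~$\widetilde{\mathcal E}$ equals $\widetilde L[\,X_0\circ T^{-1}\,]$, where $\widetilde L$ denotes the linear differential operator of~$\widetilde{\mathcal E}$. For the transformation to belong to~$\widehat G^\sim_1$ this must vanish for \emph{every} equation of~$\widehat{\mathcal L}_1$, i.e.\ for all values of the arbitrary elements; hence $X_0\circ T^{-1}$ would have to lie in the kernel of~$\widetilde L$ for all admissible coefficients, and the only function with this property is the zero function (at any point where it were nonzero one could adjust, say,~$\tilde a_0$ there to violate the equation). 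Therefore $X_0=0$. This is exactly the point distinguishing the equivalence \emph{group} from the equivalence \emph{groupoid}: in Corollary~\ref{CorollaryOnEquivGroupoidOfLinHomogenODEsOfOrderGreaterThan2} the ratio $X_0/X_1$ may be an arbitrary solution of the particular equation being transformed, whereas an equivalence transformation is not allowed to depend on the arbitrary elements and so is forced to have $X_0=0$.

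Finally I would verify the converse inclusion: substituting $X_0=0$ (and $b=0$) into the equivalence-group formulae of~$G^\sim_1$ from Proposition~\ref{proposition4} yields $\tilde b=0$, so each such transformation indeed preserves~$\widehat{\mathcal L}_1$, and after discarding the now-trivial component for~$b$ it restricts to an element of~$\widehat G^\sim_1$ acting on $a_{r-2},\dots,a_0$ exactly as in~$G^\sim_1$. Combining the two inclusions gives the claimed description. I expect no serious obstacle; the only step needing a little care is the vanishing of~$X_0$, together with the observation that for $r=2$ one must reproduce the splitting directly rather than invoke normalization, which fails for~$\mathcal L_1$ with $r=2$.
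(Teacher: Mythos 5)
Your argument is correct and follows exactly the route the paper intends (it states this corollary without proof, as an immediate analogue of Corollary~\ref{CorollaryOnEquivGroupOfLinHomogenODEs}): restrict $G^\sim_1$ to those transformations preserving the constraint $b=0$, and observe that $\tilde b=\widetilde L[X_0\circ T^{-1}]$ must vanish for all admissible coefficient tuples, which forces $X_0=0$ since one may vary $\tilde a_0$ freely. Your separate treatment of $r=2$ by direct splitting (rather than by normalization of $\mathcal L_1$, which fails there) is the right precaution and matches the paper's own handling of that case in Propositions~\ref{proposition1} and~\ref{proposition4}.
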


The equivalence groupoid of the subclass~$\widehat{\mathcal L}_1$ is exhaustively described 
by the following two assertions depending on the value of~$r$. 

\begin{corollary}\label{CorollaryOnEquivGroupoidOfLinHomogenODEsInRationalFormOfOrder2}
If $r=2$, then the subclass~$\widehat{\mathcal L}_1$ is semi-normalized but not normalized
since it is a single orbit of the free particle equation $x''=0$ under the action of the group~$\widehat G^\sim_1$.
\end{corollary}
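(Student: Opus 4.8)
The plan is to follow the pattern of Proposition~\ref{proposition2} and Corollary~\ref{CorollaryOnEquivGroupoidOfLinHomogenODEsOfOrder2}: first show that, for $r=2$, the subclass~$\widehat{\mathcal L}_1$ is a single orbit of the free particle equation $x''=0$ under the action of~$\widehat G^\sim_1$, and then read off both semi-normalization and non-normalization from this fact together with the structure of the point symmetry group of $x''=0$. For the single-orbit claim, recall from Corollary~\ref{CorollaryOnEquivGroupOfLinHomogenODEsInRationalForm} and Proposition~\ref{proposition4} that the projections of elements of~$\widehat G^\sim_1$ to the variable space are exactly the transformations $\tilde t=T(t)$, $\tilde x=C\,(T_t)^{1/2}x$ with $T_t\ne0$ and a nonzero constant~$C$ (here $r=2$, with the branch/sign convention of footnote~\ref{FootnoteOnPowersOfTt}). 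Now take an arbitrary equation $\mathcal E\colon x''+a_0(t)x=0$ from~$\widehat{\mathcal L}_1$, pick locally a nonvanishing solution~$\varphi_1$ and a solution~$\varphi_2$ linearly independent of it, and observe that their Wronskian $W=\varphi_1\varphi_2'-\varphi_1'\varphi_2$ is a nonzero constant because~$\mathcal E$ has no $x'$-term. The homogeneous instance of the Arnold transformation~\eqref{EqArnoldTrans}, namely $\tilde t=\varphi_2/\varphi_1$, $\tilde x=x/\varphi_1$, maps~$\mathcal E$ to $\tilde x''=0$; the point to be checked is that it is the projection of an element of~$\widehat G^\sim_1$, which follows since $T_t=W/\varphi_1^{\,2}$ gives $1/\varphi_1=W^{-1/2}(T_t)^{1/2}$ for a suitable branch of the square root, so the transformation has exactly the form~\eqref{trans2} with $X_0=0$ and $C=W^{-1/2}$. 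As~$\mathcal E$ was arbitrary, every equation of~$\widehat{\mathcal L}_1$ lies in the $\widehat G^\sim_1$-orbit of $x''=0$.

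Semi-normalization then follows by the conjugation argument already carried out in the proof of Proposition~\ref{proposition2} (``any class that is a single orbit of its equivalence group is semi-normalized''): given $\mathcal E_1,\mathcal E_2\in\widehat{\mathcal L}_1$ and a point transformation~$\mathcal T$ linking them, together with projections $\mathcal T_1,\mathcal T_2$ of elements of~$\widehat G^\sim_1$ sending $\mathcal E_1$ and~$\mathcal E_2$ respectively to $x''=0$, the composition $\mathcal T_0:=\mathcal T_2\mathcal T\mathcal T_1^{-1}$ is a point symmetry of $x''=0$, whence $\mathcal T=\check{\mathcal T}\hat{\mathcal T}$ with $\check{\mathcal T}=\mathcal T_2^{-1}\mathcal T_1$ the projection of an element of~$\widehat G^\sim_1$ and $\hat{\mathcal T}=\mathcal T_1^{-1}\mathcal T_0\mathcal T_1$ a point symmetry of~$\mathcal E_1$.

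Non-normalization is seen as in Proposition~\ref{proposition2}: the equation $x''=0$ belongs to~$\widehat{\mathcal L}_1$ and admits point symmetries that are truly fractional linear in~$x$ or whose $t$-component genuinely depends on~$x$ (for instance, the one generated by $x\p_t$), and none of these has the form~\eqref{trans2}; such a symmetry is therefore an admissible transformation of~$\widehat{\mathcal L}_1$ not induced by~$\widehat G^\sim_1$. I expect no real obstacle: the only delicate points are the choice of branch of $(T_t)^{1/2}$ in the identity $1/\varphi_1=W^{-1/2}(T_t)^{1/2}$ (footnote~\ref{FootnoteOnPowersOfTt}) and the local nonvanishing of~$\varphi_1$, both harmless in the local setting, and once the single-orbit claim is established everything else is a direct transcription of the general facts already proved.
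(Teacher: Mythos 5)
Your proof is correct and follows exactly the route the paper intends for this corollary (single orbit of $x''=0$ under $\widehat G^\sim_1$, then the conjugation argument of Proposition~\ref{proposition2} for semi-normalization and the non-fiber-preserving symmetries of $x''=0$ for non-normalization). The one detail you supply that the paper leaves implicit — that the homogeneous Arnold transformation is genuinely of the form~\eqref{trans2} because the Wronskian of two solutions of $x''+a_0(t)x=0$ is a nonzero constant, so $1/\varphi_1=W^{-1/2}(T_t)^{1/2}$ — is exactly the right verification and is handled correctly.
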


\begin{corollary}\label{CorollaryOnEquivGroupoidOfLinHomogenODEsInRationalFormOfOrderGreaterThan2}
For each equation~$\mathcal E$ from the subclass~$\widehat{\mathcal L}_1$ with $r\geqslant3$,
a point transformation is the transformational part of an admissible transformation in~$\widehat{\mathcal L}_1$ with~$\mathcal E$ as source 
if and only if it is of the form~\eqref{trans2},
where the product~$T_t{}^{-\frac{r-1}2}X_0$ is a solution of~$\mathcal E$.
This means that this subclass is uniformly semi-normalized with respect to linear superposition of solutions.
\end{corollary}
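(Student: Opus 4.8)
The plan is to deduce the statement from Proposition~\ref{proposition4} (for $r\geqslant3$ the class~$\mathcal L_1$ is normalized) together with the elementary superposition structure of homogeneous linear ODEs, so that no direct computation of the transformation component for~$b$ is needed.

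First I would note that, since $\widehat{\mathcal L}_1$ is a subclass of~$\mathcal L_1$ and the latter is normalized for $r\geqslant3$, any admissible transformation within~$\widehat{\mathcal L}_1$ is in particular an admissible transformation within~$\mathcal L_1$, whence its transformational part~$\mathcal T$ has the form~\eqref{trans2}. It then remains to decide, for a fixed $\mathcal E\in\widehat{\mathcal L}_1$, which transformations~$\mathcal T$ of the form~\eqref{trans2} map~$\mathcal E$ to an equation that again belongs to~$\widehat{\mathcal L}_1$, i.e., whose transformed arbitrary element~$\tilde b$ vanishes. To settle this I would factor $\mathcal T=\mathcal T_1\circ\mathcal T_0$, where $\mathcal T_0\colon(t,x)\mapsto(t,x+w)$ with $w:=C^{-1}T_t^{-(r-1)/2}X_0$ and $\mathcal T_1\colon(t,x)\mapsto\bigl(T(t),\,C(T_t)^{(r-1)/2}x\bigr)$; by Corollary~\ref{CorollaryOnEquivGroupOfLinHomogenODEsInRationalForm}, $\mathcal T_1$ is the projection of an element of~$\widehat G^\sim_1$. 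Writing~$L$ for the linear operator of~$\mathcal E$, the translation~$\mathcal T_0$ carries the equation $Lx=0$ into $Lx=Lw$, which is still of the rational form~\eqref{ODE1} (the operator~$L$, and hence the coefficients of $x^{(r-1)}$ and $x^{(r-2)}$, is untouched), with right-hand side~$Lw$; applying~$\mathcal T_1$ and using that a transformation of the form~\eqref{trans2} with $X_0=0$ acts linearly on functions~$x(t)$, hence maps homogeneous equations to homogeneous ones and inhomogeneous equations to inhomogeneous ones, we obtain $\tilde b=0$ if and only if $Lw=0$. Therefore, for $\mathcal T$ of the form~\eqref{trans2} the target equation lies in~$\widehat{\mathcal L}_1$ precisely when $w$, equivalently $T_t^{-(r-1)/2}X_0$, is a solution of~$\mathcal E$; this is the asserted criterion, and in that case $\mathcal T=\mathcal T_1\circ\mathcal T_0$ with $\mathcal T_0$ a linear superposition symmetry of~$\mathcal E$ and $\mathcal T_1$ the projection of an element of~$\widehat G^\sim_1$.

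The last sentence of the corollary then follows from this factorization: every admissible transformation in~$\widehat{\mathcal L}_1$ with source~$\mathcal E$ is the composition of a linear superposition symmetry transformation of~$\mathcal E$ (the map $x\mapsto x+w$ with $w$ a solution of~$\mathcal E$) with the projection of an element of the equivalence group~$\widehat G^\sim_1$; for every equation of~$\widehat{\mathcal L}_1$ the corresponding group of such symmetry transformations is parameterized by the $r$-dimensional solution space and hence is commutative and $r$-dimensional, and all these groups have the same abstract structure. This is exactly the uniform semi-normalization with respect to linear superposition of solutions, as defined after Corollary~\ref{CorollaryOnEquivGroupoidOfLinHomogenODEsOfOrderGreaterThan2}.

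The argument is brief, and there is no deep obstacle; the only points that need care are carrying the half-integer power~$(r-1)/2$ correctly through the factorization (cf.\ the footnote to Proposition~\ref{proposition4}) and checking that the inner translation~$\mathcal T_0$ indeed preserves the rational form. With those verified, the assertion is an immediate consequence of Proposition~\ref{proposition4}.
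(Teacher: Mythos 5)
Your proof is correct and follows essentially the route the paper intends: normalization of the ambient class~$\mathcal L_1$ (Proposition~\ref{proposition4}) forces the form~\eqref{trans2}, and the factorization $\mathcal T=\mathcal T_1\circ\mathcal T_0$ into a superposition shift and a projection of an element of~$\widehat G^\sim_1$ is exactly the decomposition the paper uses to define uniform semi-normalization. The only cosmetic difference is that the paper's implicit argument for the criterion on~$X_0$ is to pull back the zero solution of the (homogeneous) target, giving $-X_0/X_1$ as a solution of~$\mathcal E$ in one line, whereas you compute the transformed inhomogeneity $\tilde b\propto Lw$; both yield the same condition.
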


\subsection{Laguerre--Forsyth form}

Transformations from $G^\sim_1$ are parameterized by an arbitrary function $T=T(t)$ with $T_t\ne0$.
%Their components for $a_{r-2}$ take the form \[\tilde a_{r-2}=\todo.\]
Hence we can set $a_{r-2}=0$ in the equation~\eqref{ODE1} by a transformation from the group $G^\sim_1$,
where the parameter-function $T$ is a solution of the equation
\begin{gather*}
T_{ttt}T_t-\frac32T_{tt}^{\,\,2}+\frac{12}{r(r^2-1)}a_{r-2}T_t^{\,4}=0. %\tilde
\end{gather*}
Thus, a family of such equivalence transformations parameterized by the arbitrary element~$a_{r-2}$
maps the subclass~$\mathcal L_1$ onto the narrower subclass~$\mathcal L_2$ of equations in the \textit{Laguerre--Forsyth form}
\begin{equation}\label{ODE2}
x^{(r)}+a_{r-3}(t)x^{(r-3)}+\dots+a_1(t)x'+a_0(t)x=b(t).
\end{equation}
Note that, in contrast to the transformation~\eqref{EqTransToRationalForm},
the above map does not preserve the corresponding subclass of linear ODEs with constant coefficients.

\begin{proposition}\label{proposition5}
The equivalence group~$G^\sim_2$ of the subclass~$\mathcal L_2$ with $r\geqslant2$ consists of the transformations
whose projections to the variable space have the form
\begin{gather}
\tilde t=\frac{\alpha t+\beta}{\gamma t+\delta}, \quad \tilde x=\frac C{(\gamma t+\delta)^{r-1}}x+X_0(t), \label{trans3}
\end{gather}
where $\alpha$, $\beta$, $\gamma$, $\delta$ and~$C$ are arbitrary constants with $\alpha\delta-\beta\gamma\ne0$ and $C\ne0$
that are defined up to obvious rescaling (so, only four constants among them are essential),
and $X_0$ is an arbitrary smooth function of~$t$.
In the case $r=2$ the subclass~$\mathcal L_2$ is semi-normalized but not normalized.
If $r\geqslant3$, then the group~$G^\sim_2$ generates the equivalence groupoid of this subclass, i.e., the subclass is normalized.
\end{proposition}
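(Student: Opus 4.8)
The plan is to treat the cases $r=2$ and $r\geqslant3$ separately, paralleling the proofs of Propositions~\ref{proposition1}--\ref{proposition4} and exploiting the chain $\mathcal L_2\subset\mathcal L_1\subset\mathcal L$. For $r\geqslant3$ I would start from the observation that any admissible transformation within~$\mathcal L_2$ is, in particular, an admissible transformation within~$\mathcal L_1$, since its source and target both lie in~$\mathcal L_1$; by the normalization of~$\mathcal L_1$ (Proposition~\ref{proposition4}) its transformational part therefore has the form~\eqref{trans2}, i.e.\ $\tilde t=T(t)$, $\tilde x=C(T_t)^{(r-1)/2}x+X_0(t)$. For $r=2$ the class~$\mathcal L_1$ is only semi-normalized, so this shortcut is unavailable; instead I would repeat, for equations of the form~\eqref{ODE2} with $r=2$ (that is, $x''=b(t)$), the splitting argument of Propositions~\ref{proposition1} and~\ref{proposition4}: the coefficient of~$(x')^3$ split with respect to the single arbitrary element~$\tilde b$ gives $T_x=0$, the coefficient of~$(x')^2$ gives $X_{xx}=0$, and hence the form~\eqref{trans1}.

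The key step is to pin down the admitted functions~$T$. Substituting~\eqref{trans2} into the transformed equation~$\tilde{\mathcal E}$, written in Laguerre--Forsyth form, expressing all quantities through the variables without tildes and substituting for~$x^{(r)}$ in view of~$\mathcal E$ (also in Laguerre--Forsyth form), I would collect the coefficient of~$x^{(r-2)}$. Because $a_{r-2}=\tilde a_{r-2}=0$, this coefficient reduces to a nonzero multiple of $T_{ttt}T_t-\tfrac32T_{tt}^{\,2}$ --- precisely the expression appearing in the gauge equation displayed just before the proposition with $a_{r-2}$ set to zero --- so it vanishes if and only if $T_{ttt}T_t-\tfrac32T_{tt}^{\,2}=0$, whence $T$ is fractional linear, $T=(\alpha t+\beta)/(\gamma t+\delta)$ with $\alpha\delta-\beta\gamma\ne0$. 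In the case $r=2$ the same conclusion follows instead from the coefficients of~$x'$ and of~$x$ in $\tilde x''=\tilde b$ after substituting $x''=b$: they give $X_{1,t}/X_1=\tfrac12\,T_{tt}/T_t$ and $X_{1,tt}/X_{1,t}=T_{tt}/T_t$, whose compatibility forces $T_t$ to be proportional to $(\gamma t+\delta)^{-2}$, i.e.\ $T$ fractional linear again.

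With~$T$ fractional linear one has $T_t=(\alpha\delta-\beta\gamma)(\gamma t+\delta)^{-2}$, so $(T_t)^{(r-1)/2}=(\alpha\delta-\beta\gamma)^{(r-1)/2}(\gamma t+\delta)^{-(r-1)}$; absorbing the constant $(\alpha\delta-\beta\gamma)^{(r-1)/2}$ into~$C$ rewrites~\eqref{trans2} in the form~\eqref{trans3}, the asserted rescaling freedom being evident. Conversely every transformation of the form~\eqref{trans3} preserves the Laguerre--Forsyth form (run the above coefficient computation backwards, using that $T_{ttt}T_t-\tfrac32T_{tt}^{\,2}$ vanishes for fractional linear~$T$), and its prolongation to the arbitrary elements $a_{r-3},\dots,a_0,b$ is a point transformation in their joint space with the variables; these prolongations constitute the equivalence group~$G^\sim_2$. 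For $r\geqslant3$, since every admissible transformation in~$\mathcal L_2$ is such a prolongation, the subclass is normalized. For $r=2$, the subclass~$\mathcal L_2$ coincides with the single orbit $\{x''=b(t)\}$ of the free particle equation $x''=0$ under the action of~$G^\sim_2$ (each equation $x''=b$ is mapped to $x''=0$ by $\tilde x=x-x_0$ with $x_0''=b$, which is the projection of an element of~$G^\sim_2$), so the single-orbit argument from the proof of Proposition~\ref{proposition2} shows that~$\mathcal L_2$ is semi-normalized; it is not normalized because $x''=0$ admits point symmetries --- fiber-non-preserving ones and ones fractional linear in~$x$ --- that are incompatible with the form~\eqref{trans3}.

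I expect the main obstacle to be the bookkeeping in the key step: one has to verify that, after substituting for~$x^{(r)}$ using~$\mathcal E$, no term other than the expansion of~$\tilde x^{(r)}$ contributes to the coefficient of~$x^{(r-2)}$ (since $a_{r-2}=0$ in~$\mathcal E$, the replacement $x^{(r)}\mapsto-a_{r-3}x^{(r-3)}-\cdots$ produces nothing of that order, and the summands $\tilde a_{r-3}\tilde x^{(r-3)},\dots$ are of lower order), and then to evaluate that coefficient explicitly via Fa\`a di Bruno's formula and recognize it as a nonzero multiple of $T_{ttt}T_t-\tfrac32T_{tt}^{\,2}$. Once this is established, the remaining steps are routine.
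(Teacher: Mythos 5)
Your proposal is correct and follows essentially the same route as the paper: reduce to the form~\eqref{trans2} via the normalization of~$\mathcal L_1$ (resp.\ the splitting arguments of Propositions~\ref{proposition1} and~\ref{proposition4} for $r=2$), extract the vanishing of the Schwarzian derivative of~$T$ from the coefficient of~$x^{(r-2)}$ (resp.\ of~$x$), and handle semi-normalization for $r=2$ by the single-orbit argument of Proposition~\ref{proposition2}. The only cosmetic difference is that for $r=2$ you solve the two first-order conditions on~$X_1$ and~$T$ by compatibility rather than combining them directly into the Schwarzian equation~\eqref{EqWithSchwarzianDer}, which is an equivalent computation.
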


\begin{proof}
We should again consider the cases $r=2$ and $r\geqslant3$ separately.

For $r=2$ we repeat the proof of Proposition~\ref{proposition1} and the corresponding part of the proof of Proposition~\ref{proposition4}
and derive the form~\eqref{trans1} for equivalence transformations of the subclass~$\mathcal L_2$.
Then further collecting coefficients of~$x$ gives the equation
\begin{gather}\label{EqWithSchwarzianDer}
\frac{T_{ttt}}{T_t}-\frac32\left(\frac{T_{tt}}{T_t}\right)^2=0.
\end{gather}
In other words, the Schwarzian derivative of the function~$T$ vanishes, i.e.,
the function $T$ is fractional linear,
\begin{gather}\label{EqTisFractionalLinear}
T(t)=\frac{\alpha t+\beta}{\gamma t+\delta},
\end{gather}
where $\alpha$, $\beta$, $\gamma$, $\delta$ are arbitrary constants with $\alpha\delta-\beta\gamma\ne0$
that are defined up to nonvanishing constant multiplier.
The rest of terms in the determining equation results in the equivalence transformation component for the arbitrary element~$b$
of the subclass~$\mathcal L_2$ with $r=2$.
The semi-normalization of this subclass is proved in the same way as Proposition~\ref{proposition2}.

In order to describe the entire equivalence groupoid the subclass~$\mathcal L_2$ in the case $r\geqslant3$,
we suppose that a point transformation~$\mathcal T$ links equations~$\mathcal E$ and~$\tilde{\mathcal E}$ from the class~$\mathcal L_2$.
(We assume that all the values in the equation~$\tilde{\mathcal E}$ are with tildes.)
Thus,~$\mathcal T$ has the form~\eqref{trans2}.
We express the derivatives of $\tilde x$ with respect to~$\tilde t$ in terms of~$(t,x)$,
substitute these expressions into the equation~$\tilde{\mathcal E}$
and then substitute the expression for~$x^{(r)}$ implied by~$\mathcal E$.
Collecting the coefficients of $x^{(r-2)}$ gives the equation~\eqref{EqWithSchwarzianDer}, i.e.,
the function $T$ is of the form~\eqref{EqTisFractionalLinear}.
We substitute the expression for~$T$ into~\eqref{trans2} and obtain transformations that map any equation
from the subclass~$\mathcal L_2$ to another equation from the same subclass,
and the new arbitrary elements functionally depend on the old variables and the old arbitrary elements.
Therefore, prolongations of these transformations to the arbitrary elements $a_{r-3}$,~$\dots$, $a_0$ and~$b$
constitute the equivalence group~$G^\sim_2$ of the subclass~$\mathcal L_2$.
\end{proof}

Denote by~$\widehat{\mathcal L}_2$ the subclass of homogeneous equations within the class~$\mathcal L_2$.
The case $r=2$ is singular for transformational properties of~$\widehat{\mathcal L}_2$
since then the only element of this subclass is the free particle equation $x''=0$.
This implies that the subclass~$\widehat{\mathcal L}_2$ is normalized,
and its equivalence group coincides with the point symmetry group of the equation $x''=0$,
which consists of fractional linear transformations in the space of~$(t,x)$.
The case $r\geqslant3$ for~$\widehat{\mathcal L}_2$ is similar to ones for~$\widehat{\mathcal L}$ and~$\widehat{\mathcal L}_1$.

\begin{corollary}\label{CorollaryOnEquivGroupAndEquivGroupoidOfLinHomogenODEsInLaguerreForsythFormOfOrderGreaterThan2}
If $r\geqslant3$,
the equivalence group~$\widehat G^\sim_2$ of the subclass~$\widehat{\mathcal L}_2$
is derived from the equivalence group~$G^\sim_2$ of the class~$\mathcal L_2$
by setting $X_0=0$ and neglecting the transformation component for~$b$.
The subclass~\smash{$\widehat{\mathcal L}_2$} is uniformly semi-normalized with respect to linear super\-position of solutions.
More precisely, the equivalence groupoid of this subclass
is constituted by triples of the form~$(\mathcal E,\tilde{\mathcal E},\mathcal T)$, 
where the equation-source~$\mathcal E$ runs through the entire subclass~$\widehat{\mathcal L}_2$, 
the transformational part~$\mathcal T$ is of the form~\eqref{trans3}
with the product~$(\gamma t+\delta)^{r-1}X_0$ being an arbitrary solution of~$\mathcal E$, 
and the equation-target is defined by $\tilde{\mathcal E}=\mathcal T(\mathcal E)$.
\end{corollary}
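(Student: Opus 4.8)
The plan is to derive this corollary from Proposition~\ref{proposition5} together with the previously established pattern of uniform semi-normalization (Corollaries~\ref{CorollaryOnEquivGroupoidOfLinHomogenODEsOfOrderGreaterThan2} and~\ref{CorollaryOnEquivGroupoidOfLinHomogenODEsInRationalFormOfOrderGreaterThan2}), which the subclass~$\widehat{\mathcal L}_2$ should mirror almost verbatim. First I would establish the claim about~$\widehat G^\sim_2$: an equivalence transformation of~$\mathcal L_2$ preserves the homogeneity constraint~$b=0$ exactly when the induced transformation component for~$b$ sends $b=0$ to $\tilde b=0$; reading off that component from the proof of Proposition~\ref{proposition5} (it is linear in~$b$ together with a term built from~$X_0$ and the derivatives of the other arbitrary elements) shows this forces, after setting $b=\tilde b=0$, precisely the condition that $X_0$ solves the homogeneous equation, while the remaining transformation data for~$(t,x)$ stay of the form~\eqref{trans3}. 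Dropping the now-redundant $b$-component and setting $X_0=0$ then recovers exactly the projection~$\tilde t=(\alpha t+\beta)/(\gamma t+\delta)$, $\tilde x=C(\gamma t+\delta)^{-(r-1)}x$, which is~$\widehat G^\sim_2$.

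Next I would characterize the full equivalence groupoid of~$\widehat{\mathcal L}_2$. By Proposition~\ref{proposition5} any admissible transformation between two equations of~$\mathcal L_2$ has transformational part of the form~\eqref{trans3}; restricting to homogeneous equations and substituting into~$\tilde{\mathcal E}$, the computation that produced the $b$-component in Proposition~\ref{proposition5} now shows the target is homogeneous if and only if the function obtained by applying the pure point transformation to the zero solution, namely $(\gamma t+\delta)^{r-1}X_0$ (equivalently, the preimage of $X_0$ under the $x$-scaling), satisfies~$\mathcal E$. Conversely, given any equation~$\mathcal E\in\widehat{\mathcal L}_2$, any choice of the constants with $\alpha\delta-\beta\gamma\ne0$, $C\ne0$ and any solution~$\varphi$ of~$\mathcal E$, setting $X_0=(\gamma t+\delta)^{-(r-1)}\varphi$ yields a transformation of the form~\eqref{trans3} whose prolongation maps~$\mathcal E$ to a homogeneous equation $\tilde{\mathcal E}=\mathcal T(\mathcal E)$; this is the desired parameterization of the groupoid by triples $(\mathcal E,\mathcal T(\mathcal E),\mathcal T)$.

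Finally, to conclude uniform semi-normalization with respect to linear superposition of solutions, I would observe — exactly as in the discussion following Corollary~\ref{CorollaryOnEquivGroupoidOfLinHomogenODEsOfOrderGreaterThan2} — that the above representation exhibits each admissible transformation as the composition of a linear-superposition symmetry transformation $x\mapsto x+\varphi$ of the source equation (with~$\varphi$ a solution of~$\mathcal E$) with the projection to the variable space of an element of~$\widehat G^\sim_2$, and that for every equation in~$\widehat{\mathcal L}_2$ the associated group of linear-superposition symmetries is commutative, $r$-dimensional, and of the same structure. That is precisely the definition of uniform semi-normalization with respect to linear superposition of solutions.

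The only genuinely non-routine point is verifying that the extra $X_0$-dependent contribution to the $b$-component in Proposition~\ref{proposition5} is exactly (a nonzero multiple of) the result of substituting $X_0$, rescaled by $(\gamma t+\delta)^{r-1}$, into the left-hand side of~$\mathcal E$ — i.e., that "the new $b$ vanishes" and "$(\gamma t+\delta)^{r-1}X_0$ solves $\mathcal E$" are literally the same condition. This is a bookkeeping fact about how the affine part~$X_0$ of the point transformation interacts with the linear differential operator under the change of variables~\eqref{trans3}, and it follows from linearity of the equation together with Faà di Bruno's formula for the change of the independent variable; no new idea beyond the computations already carried out for~$\mathcal L$ and~$\mathcal L_1$ is needed, only the restriction of the constant parameters to the Möbius form dictated by the Laguerre--Forsyth gauge.
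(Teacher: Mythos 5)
Your proposal is correct and follows essentially the same route the paper intends: the corollary is stated there without proof, as an immediate consequence of Proposition~\ref{proposition5} (normalization of~$\mathcal L_2$ for $r\geqslant3$, so every admissible transformation in~$\widehat{\mathcal L}_2$ has transformational part of the form~\eqref{trans3}) combined with the same observation already used for~$\widehat{\mathcal L}$ and~$\widehat{\mathcal L}_1$, namely that homogeneity of the target is equivalent to $X_0/X_1=C^{-1}(\gamma t+\delta)^{r-1}X_0$ satisfying~$\mathcal E$, which yields both the groupoid description and, via the factorization $\tilde x=X_1(x+X_0/X_1)$, the uniform semi-normalization. The only quibble is a wording slip: the relevant function is the preimage of the zero solution of~$\tilde{\mathcal E}$ under~$\mathcal T$ (equivalently $X_0/X_1$ up to sign and a nonzero constant), not the image of the zero solution of~$\mathcal E$; your formulas nevertheless use the correct quantity.
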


\begin{remark}
In fact, the equivalence and admissible transformations for the classes~$\widehat{\mathcal L}$, $\widehat{\mathcal L}_1$ and $\widehat{\mathcal L}_2$
were known in the literature for a long time due to St\"ackel, Laguerre, Forsyth et al.;
see, e.g., \cite[Section~4.1]{Schwarz2008} and \cite[Chapter~I and \S~II.4]{Wilczynski1906}. %\cite[Section~6]{Olver1995},
At the same time, we rigorously formulate these results and explicitly describe the associated equivalence groupoids by proving
that these classes are normalized if $r\geqslant3$ and semi-normalized if $r=2$.
\end{remark}

\subsection{First Arnold form}

The above gauges of arbitrary elements of the class~$\mathcal L$, which are commonly used,
concern the subleading coefficients~$a_{r-1}$ and~$a_{r-2}$, but this is not a unique possibility.
Instead of~$a_{r-1}$ and~$a_{r-2}$ one can gauge the lowest coefficients $a_0$ and $a_1$.
We can set $a_0=0$ in any equation from the class~$\mathcal L$ by an Arnold transformation
\begin{gather*}
\tilde t=t, \quad \tilde x=\dfrac{x}{\varphi_1(t)},
\end{gather*}
where $\varphi_1$ is a nonzero solution of the corresponding homogeneous equation.
As a result, we obtain the subclass~$\mathcal A_1$ of the class~$\mathcal L$ that is constituted by the equations of the form
\begin{equation}\label{1st_Arnold's_form}
x^{(r)}+a_{r-1}(t)x^{(r-1)}+\dots+a_1(t)x^{(1)}=b(t).
\end{equation}
Following the (eponymous) Arnold Principle,%
\footnote{%
The Arnold Principle states that if a notion bears a personal name, then this name is not the name of the discoverer.
The Berry Principle extends the Arnold Principle by stating the following: the Arnold Principle is applicable to itself.}
we call this form the \emph{first Arnold form}.

\begin{proposition}\label{proposition6}
The equivalence groupoid of the class~$\mathcal A_1$, where $r\geqslant3$, is constituted by
the admissible transformations whose equations-sources exhaust the whole class~$\mathcal A_1$ and 
whose transformational parts are of the form
\begin{gather}\label{1stArnoldAdmTrans}
\tilde t=T(t), \quad \tilde x=\frac{x}{\psi_1(t)}+X_0(t),
\end{gather}
where $T$ and $X_0$ are arbitrary smooth functions of~$t$ with $T_t\ne0$,
and $\psi_1=\psi_1(t)$ is a nonzero solution of the homogeneous equation associated with the corresponding equation-source.
\end{proposition}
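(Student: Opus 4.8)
The plan is to start from Proposition~\ref{proposition3}, which already pins down the shape of the transformational part, and then to isolate the single extra restriction produced by the constraints $a_0=0$ and $\tilde a_0=0$ on the source and the target, doing this by following the associated homogeneous equations rather than by computing $\tilde a_0$ by brute force. Concretely, since $\mathcal A_1\subset\mathcal L$, any admissible transformation $(\mathcal E,\tilde{\mathcal E},\mathcal T)$ in~$\mathcal A_1$ is admissible in~$\mathcal L$, so, invoking $r\geqslant3$ through Proposition~\ref{proposition3}, the transformation~$\mathcal T$ must have the form~\eqref{trans1}, $\tilde t=T(t)$, $\tilde x=X_1(t)x+X_0(t)$ with $T_tX_1\ne0$; conversely, every transformation of this form maps each equation of~$\mathcal L$ to an equation of~$\mathcal L$. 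Hence the only conditions distinguishing admissible transformations of~$\mathcal A_1$ from those of~$\mathcal L$ are $a_0=0$ for the source and $\tilde a_0=0$ for the target.

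The next step is to read the constraint $a_0=0$ off the solutions: for an equation of the form~\eqref{ODE}, the constant function $x\equiv1$ satisfies the associated homogeneous equation if and only if $a_0=0$, so $\mathcal E\in\mathcal A_1$ if and only if the solution space~$V$ of the homogeneous equation of~$\mathcal E$ contains the constants, and likewise $\tilde{\mathcal E}\in\mathcal A_1$ if and only if the solution space~$\tilde V$ of the homogeneous equation of~$\tilde{\mathcal E}$ contains the constants. I would then note how~$V$ is carried to~$\tilde V$ by a transformation of the form~\eqref{trans1}: since $\mathcal T$ sends solutions of~$\mathcal E$ bijectively to solutions of~$\tilde{\mathcal E}$ and is affine in~$x$, the difference of any two solutions of~$\mathcal E$, which exhausts~$V$, is sent to $X_1$ times that difference rewritten in the variable $\tilde t=T(t)$, and these exhaust~$\tilde V$; equivalently, the transformation $\tilde t=T(t)$, $\tilde x=X_1(t)x$ maps the homogeneous equation of~$\mathcal E$ onto that of~$\tilde{\mathcal E}$. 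Consequently $\tilde V$ consists of the functions $t\mapsto X_1(t)\varphi(t)$, rewritten via $\tilde t=T(t)$, with $\varphi$ ranging over~$V$, so $\tilde x\equiv1$ lies in~$\tilde V$ exactly when $1/X_1\in V$, i.e.\ when $\psi_1:=1/X_1$ is a nowhere vanishing solution of the homogeneous equation of~$\mathcal E$. Writing $X_1=1/\psi_1$ turns~\eqref{trans1} into~\eqref{1stArnoldAdmTrans}.

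This delivers both inclusions. On the one hand, every admissible transformation in~$\mathcal A_1$ has been shown to be of the form~\eqref{1stArnoldAdmTrans} with $\psi_1$ a nonzero solution of the homogeneous equation of its source. On the other hand, given $\mathcal E\in\mathcal A_1$ and a nowhere vanishing solution $\psi_1$ of that homogeneous equation, the transformation~\eqref{1stArnoldAdmTrans} is of the form~\eqref{trans1} with $X_1=1/\psi_1$, hence maps $\mathcal E$ to some $\tilde{\mathcal E}\in\mathcal L$, and since $1/X_1=\psi_1\in V$ the description of~$\tilde V$ above forces $\tilde x\equiv1\in\tilde V$, that is, $\tilde{\mathcal E}\in\mathcal A_1$; so the triple $(\mathcal E,\tilde{\mathcal E},\mathcal T)$ with $\tilde{\mathcal E}=\mathcal T(\mathcal E)$ is admissible in~$\mathcal A_1$, and clearly every $\mathcal E\in\mathcal A_1$ occurs as a source (e.g.\ of the identity, or of~\eqref{1stArnoldAdmTrans} with $\psi_1\equiv1$, which solves the homogeneous equation precisely because $a_0=0$). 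The step I expect to require the most care is the justification that a transformation of the form~\eqref{trans1} carries the homogeneous solution space by multiplication by~$X_1$, equivalently that the homogeneous equations of~$\mathcal E$ and~$\tilde{\mathcal E}$ correspond under $\tilde t=T(t)$, $\tilde x=X_1(t)x$; the obvious alternative, namely expanding $\tilde a_0$ via Fa\`a di Bruno's formula and the general Leibniz rule and then solving $\tilde a_0=0$ under $a_0=0$, works as well but is exactly the cumbersome computation that this argument is designed to avoid.
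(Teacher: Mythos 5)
Your proposal is correct and follows essentially the same route as the paper: reduce to the form~\eqref{trans1} via Proposition~\ref{proposition3}, observe that $\tilde a_0=0$ is equivalent to $\tilde x\equiv1$ solving the target's homogeneous equation, and pull this back to the condition that $\psi_1=1/X_1$ solves the source's homogeneous equation. You merely make explicit two points the paper leaves implicit, namely how the homogeneous solution spaces correspond under~\eqref{trans1} and the converse inclusion; both are correct.
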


\begin{proof}
Let~$\mathcal T$ be a point transformation between equations~$\mathcal E$ and~$\tilde{\mathcal E}$ of the form~\eqref{1st_Arnold's_form}.
Then the transformation~$\mathcal T$ is of the general form~\eqref{trans1}.
As the equation~$\tilde{\mathcal E}$ belongs to the class~$\mathcal A_1$,
the function $\smash{\tilde{\psi_1}\equiv1}$ is a solution of the corresponding homogeneous equation.
Hence $\psi_1=1/X_1$ is a solution of the homogeneous equation associated with~$\mathcal E$.
Therefore, the transformation~$\mathcal T$ is of the form~\eqref{1stArnoldAdmTrans}.
\end{proof}

\begin{remark}\label{RemarkOnFiber-PreservingAdmTransOf2ndOrder1stArnoldForm}
In the case $r=2$, the same assertion is true for the fiber-preserving subgroupoid of the equivalence groupoid of the class~$\mathcal A_1$;
cf.\ Remark~\ref{RemarkOnFiber-PreservingAdmTransOf2ndOrderLODEs}.
\end{remark}

\begin{proposition}\label{proposition7}
The equivalence group~$G^\sim_{\mathcal A_1}$ of the class~$\mathcal A_1$, where $r\geqslant2$,
consists of the transformations whose projections to the variable space have the form
\begin{gather}\label{1stArnoldEquivTrans}
\tilde t=T(t), \quad \tilde x=Cx+X_0(t),
\end{gather}
where $T$ and $X_0$ are arbitrary smooth functions of~$t$ with $T_t\ne0$ and $C$ is an arbitrary nonzero constant.
\end{proposition}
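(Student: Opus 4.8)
The plan is to treat the cases $r\geqslant3$ and $r=2$ separately, in both of them reducing to the already obtained description of the equivalence \emph{groupoid} of~$\mathcal A_1$ and then exploiting the extra rigidity that distinguishes an equivalence-group element from a general admissible transformation: its projection to the variable space must be one and the same for every source equation, i.e.\ it must not depend on the arbitrary elements. Once the projection of an arbitrary $\mathcal T\in G^\sim_{\mathcal A_1}$ is put into the groupoid form~\eqref{1stArnoldAdmTrans}, the coefficient $X_1=1/\psi_1$ becomes a \emph{fixed} function, and I will show that a fixed $\psi_1$ that solves the homogeneous equation of \emph{every} equation from~$\mathcal A_1$ is necessarily constant.

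For $r\geqslant3$: any $\mathcal T\in G^\sim_{\mathcal A_1}$ is in particular an admissible transformation of~$\mathcal A_1$, so by Proposition~\ref{proposition6} its projection to the variable space has the form~\eqref{1stArnoldAdmTrans}, $\tilde t=T(t)$, $\tilde x=x/\psi_1(t)+X_0(t)$ with $T_t\ne0$, where $\psi_1$ is a nonzero solution of the homogeneous equation associated with the source. Since $\mathcal T$ belongs to the equivalence group, the functions $X_1:=1/\psi_1$ and $X_0$ are the same for all source equations; hence $\psi_1$ must satisfy $\psi^{(r)}+a_{r-1}\psi^{(r-1)}+\dots+a_1\psi'=0$ for all admissible values of $a_{r-1},\dots,a_1$. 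Writing this identity for two choices of~$a_1$ that differ everywhere (the remaining coefficients being fixed, say zero) gives $\psi'\equiv0$, so $\psi_1$ is a nonzero constant and $X_1=:C$ is a nonzero constant. Thus the projection of~$\mathcal T$ has the form~\eqref{1stArnoldEquivTrans}.

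For $r=2$, Proposition~\ref{proposition6} is not available, so I would first recover that $\mathcal T$ is fiber-preserving by repeating the relevant step of the proof of Proposition~\ref{proposition1}: after substituting the expressions with tildes into~$\tilde{\mathcal E}$ and eliminating $x''$ by means of~$\mathcal E$, the coefficient of~$(x')^3$ becomes $X_{xx}T_x-X_xT_{xx}+\tilde a_1X_xT_x^{\,2}-\tilde bT_x^{\,3}=0$ (the term with~$\tilde a_0$ is absent since $a_0\equiv0$ throughout~$\mathcal A_1$); as we study the equivalence group we may vary the transformed arbitrary elements~$\tilde a_1,\tilde b$ independently, and the coefficient of~$\tilde b$ then forces $T_x=0$. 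Consequently $\mathcal T$ lies in the fiber-preserving part of the equivalence groupoid of~$\mathcal A_1$, which by Remark~\ref{RemarkOnFiber-PreservingAdmTransOf2ndOrder1stArnoldForm} again consists of transformations of the form~\eqref{1stArnoldAdmTrans}, and the same argument as above (the constant is the only solution of $\psi''+a_1\psi'=0$ common to all~$a_1$) shows that~$X_1$ is a nonzero constant.

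Conversely, for any~$T$ with $T_t\ne0$, any nonzero constant~$C$ and any smooth~$X_0$ I take $\psi_1:=1/C$: since the arbitrary element~$a_0$ is absent in~$\mathcal A_1$, this constant solves the homogeneous equation associated with every equation of~$\mathcal A_1$, so by Proposition~\ref{proposition6} (for $r\geqslant3$) or Remark~\ref{RemarkOnFiber-PreservingAdmTransOf2ndOrder1stArnoldForm} (for $r=2$) the transformation~\eqref{1stArnoldEquivTrans} is an admissible transformation with any prescribed equation from~$\mathcal A_1$ as source; because its variable-space component is source-independent, its prolongation to the arbitrary elements is a point transformation of the joint space and hence it belongs to~$G^\sim_{\mathcal A_1}$. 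The main obstacle is the $r=2$ case, where one must separately establish fiber-preservation before invoking Remark~\ref{RemarkOnFiber-PreservingAdmTransOf2ndOrder1stArnoldForm}; for $r\geqslant3$ the statement is essentially a corollary of Proposition~\ref{proposition6}.
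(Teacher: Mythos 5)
Your proof is correct and takes essentially the same route as the paper: both reduce the projection of an equivalence transformation to the admissible-transformation form~\eqref{1stArnoldAdmTrans} (via Proposition~\ref{proposition6} for $r\geqslant3$, and via fiber-preservation together with Remark~\ref{RemarkOnFiber-PreservingAdmTransOf2ndOrder1stArnoldForm} for $r=2$) and then observe that a source-independent $\psi_1$ must solve the homogeneous equation associated with every source, which forces $\psi_1$ to be a nonzero constant. The paper phrases this last step as the existence of a homogeneous equation in~$\mathcal A_1$ not satisfied by a nonconstant~$\psi_1$, whereas you split with respect to~$a_1$ to get $\psi_1'\equiv0$; this is only a cosmetic difference.
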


\begin{proof}
%\looseness=-1
The projection of any transformation from the group~$G^\sim_{\mathcal A_1}$ to the variable space is of the form~\eqref{1stArnoldAdmTrans}.
For $r\geqslant3$, this is an obvious consequence of Proposition~\ref{proposition6}.
In the case $r=2$, similarly to the proof of Proposition~\ref{proposition1},
we can first show that projections of equivalence transformations to the variable space are fiber-preserving
and then take into account Remark~\ref{RemarkOnFiber-PreservingAdmTransOf2ndOrder1stArnoldForm}.

Note that constant functions are solutions of any homogeneous equation from the class~$\mathcal A_1$.
Therefore, the group~$G^\sim_{\mathcal A_1}$ contains the transformations
whose restrictions on the space of $(t,x)$ have the form~\eqref{1stArnoldEquivTrans}.
Moreover, only these transformations are in the group~$G^\sim_{\mathcal A_1}$.
Indeed, consider a transformation~$\mathcal T$ of the form~\eqref{1stArnoldAdmTrans} with $\psi_1\ne{\rm const}$.
Then there exists a homogeneous equation from~$\mathcal A_1$ that is not satisfied by the function $\psi_1$.
Hence the coefficient $\tilde a_0$ of the corresponding transformed equation is nonzero.
This means that the transformation~$\mathcal T$ is not a projection of an element of the group~$G^\sim_{\mathcal A_1}$.
\end{proof}

By \smash{$\widehat{\mathcal A}_1$} we denote the subclass consisting of homogeneous equations of the form~\eqref{1st_Arnold's_form},
i.e.,\ singled out from the class~$\mathcal A_1$  by the constraint $b=0$.

\begin{corollary}\label{Corollary1On1stArnoldForm}
The equivalence group~$\widehat G^\sim_{\mathcal A_1}$ of the subclass~$\widehat{\mathcal A}_1$ with $r\geqslant2$ is obtained
from the group~$\smash{G^\sim_{\mathcal A_1}}$ by setting $X_0=0$ and neglecting the transformation component for the arbitrary element~$b$.
A~point transformation~$\mathcal T$ relates two equations from this subclass
if and only if it has the form~\eqref{1stArnoldAdmTrans},
where the product~$\psi_1X_0$ is a solution of the corresponding initial equation.
\end{corollary}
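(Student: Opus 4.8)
The plan is to obtain this corollary from Propositions~\ref{proposition6} and~\ref{proposition7} in the same manner as Corollaries~\ref{CorollaryOnEquivGroupOfLinHomogenODEs} and~\ref{CorollaryOnEquivGroupoidOfLinHomogenODEsOfOrderGreaterThan2} were deduced from Proposition~\ref{proposition3}, the only extra ingredient being the elementary observation that an equation of the form~\eqref{1st_Arnold's_form} is homogeneous (i.e.\ lies in~$\widehat{\mathcal A}_1$) if and only if the zero function is among its solutions.

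For the equivalence group I would start from Proposition~\ref{proposition7}: every element of~$G^\sim_{\mathcal A_1}$ projects to a transformation~\eqref{1stArnoldEquivTrans}, so it remains to decide when its prolongation to the arbitrary elements preserves the defining constraint $b=0$. Prolonging~\eqref{1stArnoldEquivTrans} to~$b$ (via Fa\`a di Bruno's formula and the general Leibniz rule, exactly as in the proofs of Propositions~\ref{proposition3} and~\ref{proposition4}) shows that the transformed free term is a nonvanishing factor times $Cb+L[X_0]$, where $L$ denotes the differential operator of the source equation; hence the transformed equation stays homogeneous for every equation of~$\widehat{\mathcal A}_1$ precisely when $X_0$ is annihilated by~$L$ for all admissible coefficients $a_{r-1},\dots,a_1$. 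Since such an~$X_0$ must have vanishing derivatives of orders $1,\dots,r$, this pins~$X_0$ down to~$X_0=0$ (up to the constant shifts in~$x$, which are point symmetries common to all equations of~$\widehat{\mathcal A}_1$); neglecting the then-trivial transformation component for~$b$ yields the asserted form of~$\widehat G^\sim_{\mathcal A_1}$.

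For the characterization of admissible transformations I would invoke Proposition~\ref{proposition6} (and Remark~\ref{RemarkOnFiber-PreservingAdmTransOf2ndOrder1stArnoldForm} in the case $r=2$): since $\widehat{\mathcal A}_1\subset\mathcal A_1$, any point transformation~$\mathcal T$ linking two equations of~$\widehat{\mathcal A}_1$ is already of the form~\eqref{1stArnoldAdmTrans} with $\psi_1$ a nonzero solution of the (now homogeneous) source equation~$\mathcal E$, and its target lies in~$\mathcal A_1$. It remains to impose homogeneity of the target. By the observation above this is equivalent to the zero function belonging to the $\mathcal T$-image of the solution set of~$\mathcal E$; writing $\tilde x=x/\psi_1+X_0$, the zero function is the image of $x=-\psi_1X_0$, so $\tilde{\mathcal E}$ is homogeneous exactly when $\psi_1X_0$ is a solution of~$\mathcal E$. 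Conversely, if $\psi_1X_0$ solves~$\mathcal E$ then~$\mathcal T$ carries~$\mathcal E$ to an equation that is both in the first Arnold form and homogeneous, hence to an equation of~$\widehat{\mathcal A}_1$, which closes the loop.

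No step here is genuinely hard; the one I would set up with care is the explicit prolongation of~\eqref{1stArnoldEquivTrans} (equivalently~\eqref{1stArnoldAdmTrans}) to the arbitrary element~$b$, which is what makes the phrase ``$X_0$ is annihilated by the source operator~$L$'' precise. Everything else follows from Propositions~\ref{proposition6} and~\ref{proposition7} and the linear-superposition argument already used above for~$\widehat{\mathcal L}$, $\widehat{\mathcal L}_1$ and~$\widehat{\mathcal L}_2$.
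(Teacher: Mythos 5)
Your proposal is correct and follows the route the paper intends: the corollary is stated without proof precisely because it is meant to be read off from Propositions~\ref{proposition6} and~\ref{proposition7} together with the observation that an equation of the form~\eqref{1st_Arnold's_form} is homogeneous if and only if the zero function solves it, which is exactly your argument, and your derivation of the solution condition on $\psi_1X_0$ via the preimage $x=-\psi_1X_0$ of the zero function is the clean way to do it (for $r=2$ this of course covers only the fiber-preserving part of the groupoid, as in Remark~\ref{RemarkOnFiber-PreservingAdmTransOf2ndOrder1stArnoldForm}, so the ``only if'' half of the second sentence should be read with that restriction). One remark on the equivalence-group half: your computation correctly forces $X_0$ to be a common solution of all homogeneous equations of the form~\eqref{1st_Arnold's_form}, i.e.\ a constant, and the constant shifts $\tilde x=Cx+{\rm const}$ genuinely do belong to $\widehat G^\sim_{\mathcal A_1}$ (they form part of its kernel, since constants solve every equation of~$\widehat{\mathcal A}_1$), so the honest conclusion of your argument is ``$X_0={\rm const}$'' rather than ``$X_0=0$''; your parenthetical correctly identifies this, and it is a slight over-normalization in the statement itself rather than a gap you need to close.
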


\begin{corollary}\label{Corollary2On1stArnoldForm}
The classes~$\mathcal A_1$ and~$\widehat{\mathcal A}_1$ with $r\geqslant3$ are not semi-normalized.
\end{corollary}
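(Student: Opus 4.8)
The plan is to exhibit, for every $r\geqslant3$, a single admissible transformation in $\widehat{\mathcal A}_1$ — and hence in $\mathcal A_1$, since $\widehat{\mathcal A}_1\subset\mathcal A_1$ — that cannot be written as the composition of (the projection of) an equivalence transformation with a point symmetry of its source equation; the existence of such a transformation contradicts semi-normalization. First I would take as source the elementary equation $\mathcal E_0\colon x^{(r)}=0$, which lies both in $\widehat{\mathcal A}_1$ and in the subclass $\widehat{\mathcal L}_2$, and as transformational part the transformation $\mathcal T\colon\tilde t=t$, $\tilde x=x/(1+t)$ (defined locally on a domain not containing $t=-1$). By Corollary~\ref{Corollary1On1stArnoldForm}, $\mathcal T$ is the transformational part of an admissible transformation of $\widehat{\mathcal A}_1$ with source $\mathcal E_0$: indeed $\psi_1:=1+t$ is a nonzero solution of $\mathcal E_0$, and the product $\psi_1X_0$ with $X_0\equiv0$ is also a solution of $\mathcal E_0$. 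The very same triple is simultaneously an admissible transformation of $\mathcal A_1$.

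The second step is to assume, towards a contradiction, that $\mathcal A_1$ (or $\widehat{\mathcal A}_1$) with $r\geqslant3$ is semi-normalized, so that $\mathcal T$ is the composition $\check{\mathcal T}\hat{\mathcal T}$ of the projection $\check{\mathcal T}$ to the space of the variables of an equivalence transformation of the class with a point symmetry $\hat{\mathcal T}$ of the source equation $\mathcal E_0$ (the semi-normalization condition can be phrased with a point symmetry of either the source or the target; these two forms are equivalent and I would use the source version). By Proposition~\ref{proposition7} together with Corollary~\ref{Corollary1On1stArnoldForm}, the coefficient of $x$ in $\check{\mathcal T}$ — see~\eqref{1stArnoldEquivTrans} — is a nonzero constant $C$, so the coefficient of $x$ in $\check{\mathcal T}^{-1}$ is the constant $1/C$. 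Tracking the coefficient of $x$ through the composition $\hat{\mathcal T}=\check{\mathcal T}^{-1}\mathcal T$ (this coefficient multiplies under composition of transformations of the form~\eqref{trans1}, after suitable re-evaluation of the argument), one finds that the coefficient of $x$ in $\hat{\mathcal T}$ is the non-constant function $1/\bigl(C(1+t)\bigr)$ of $t$.

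The third step closes the loop: since $\mathcal E_0\in\widehat{\mathcal L}_2$, a point symmetry of $\mathcal E_0$ is in particular an admissible transformation of $\widehat{\mathcal L}_2$ having $\mathcal E_0$ as both source and target, so by Corollary~\ref{CorollaryOnEquivGroupAndEquivGroupoidOfLinHomogenODEsInLaguerreForsythFormOfOrderGreaterThan2} its coefficient of $x$ is of the form $C'(\gamma t+\delta)^{-(r-1)}$ with $C'\ne0$ and $(\gamma,\delta)\ne(0,0)$, cf.~\eqref{trans3}. Comparing with the previous step forces $(\gamma t+\delta)^{r-1}=CC'(1+t)$, which is impossible for $r\geqslant3$: a nonzero constant cannot equal $CC'(1+t)$, and a polynomial of degree $r-1\geqslant2$ cannot equal one of degree~$1$. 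This contradiction establishes the claim for both $\widehat{\mathcal A}_1$ and $\mathcal A_1$. I expect the only genuinely delicate point to be the choice of $\psi_1$: it must be selected so that $1/\psi_1$, up to a nonzero constant factor, is not the coefficient of $x$ of any point symmetry of the source; for the exceptionally symmetric equation $\mathcal E_0$ this is exactly where the exponent mismatch between $(1+t)^{-1}$ and $(\gamma t+\delta)^{-(r-1)}$ does the work, and a ``generic'' equation of $\widehat{\mathcal A}_1$, all of whose point symmetries have constant coefficient of $x$, would serve equally well.
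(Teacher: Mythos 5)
Your proof is correct, but it reaches the conclusion by a genuinely different witness than the paper. The paper's proof picks a ``generic'' equation $\mathcal E\in\widehat{\mathcal A}_1$ whose point symmetry group is exhausted by the transformations $\tilde t=t$, $\tilde x=\widehat Cx+\widehat X_0(t)$ coming from linearity and homogeneity; then every composition of such a symmetry with a projection of an element of $G^\sim_{\mathcal A_1}$ has \emph{constant} coefficient of $x$, whereas $\mathcal E$ has a nonconstant solution $\psi_1$ and hence admits an admissible transformation of the form~\eqref{1stArnoldAdmTrans} with nonconstant coefficient $1/\psi_1$. The cost of that route is the existence claim for such a generic equation, which the paper only justifies in a footnote via the classes $\widehat{\mathcal L}$ and $\widehat{\mathcal L}_2$. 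You instead take the maximally symmetric equation $x^{(r)}=0$ and the explicit transformation $\tilde x=x/(1+t)$, and close the argument by a degree count: the coefficient of $x$ of any point symmetry of $x^{(r)}=0$ is $C'(\gamma t+\delta)^{-(r-1)}$ by Corollary~\ref{CorollaryOnEquivGroupAndEquivGroupoidOfLinHomogenODEsInLaguerreForsythFormOfOrderGreaterThan2}, the coefficient contributed by an equivalence transformation~\eqref{1stArnoldEquivTrans} is a nonzero constant, and $CC'(1+t)=(\gamma t+\delta)^{r-1}$ is impossible for $r\geqslant3$. The underlying mechanism --- tracking the multiplicative coefficient of $x$ through the decomposition $\mathcal T=\check{\mathcal T}\hat{\mathcal T}$ and noting that the equivalence part can only contribute a constant --- is the same as the paper's, but your choice of witness makes the argument fully explicit and self-contained given the corollaries already established, at the price of having to control the large symmetry group of $x^{(r)}=0$ rather than a trivial one; your closing remark that a generic equation with only constant-coefficient symmetries would serve equally well is precisely the paper's proof. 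All the steps check out: $\psi_1=1+t$ is a nonzero solution of $x^{(r)}=0$, the target $\tilde x^{(r)}+\tfrac{r}{1+\tilde t}\tilde x^{(r-1)}=0$ indeed lies in $\widehat{\mathcal A}_1$, the coefficient of $x$ multiplies under composition of transformations of the form~\eqref{trans1}, and the source/target forms of semi-normalization are interchangeable as you note.
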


\begin{proof}
There exists an equation~$\mathcal E$ in~$\widehat{\mathcal A}_1\subset\mathcal A_1$
whose point symmetry group consists only of transformations related to the linearity and the homogeneity of~$\mathcal E$.%
\footnote{%
A common fact is that similar equations have similar point symmetry groups. 
Moreover, any admissible transformation in the class~$\widehat{\mathcal L}$ maps 
point symmetries associated to the linearity and homogeneity of the corresponding initial equation  
to symmetries of the same kind that are admitted by the target equation.
Hence it suffices to find an equation~$\widetilde{\mathcal E}$ with trivial point symmetries in the class~$\widehat{\mathcal L}_2$.
In view of Corollary~\ref{CorollaryOnEquivGroupAndEquivGroupoidOfLinHomogenODEsInLaguerreForsythFormOfOrderGreaterThan2},
possible point transformations of~$\widetilde{\mathcal E}$ within the class~$\widehat{\mathcal L}_2$ are exhausted,
up to point symmetries associated to the linearity and the homogeneity of~$\widetilde{\mathcal E}$, 
by the transformations of the form~\eqref{trans3}, where $C=1$ and $X^0=0$. 
Equations that are not invariant with respect to any of such transformations exist even among
equations from~\smash{$\widehat{\mathcal L}_2$} with polynomial coefficients.
}
These transformations are of the form $\tilde t=t$, $\tilde x=\widehat Cx+\widehat X^0(t)$,
where $\widehat C$ is an arbitrary nonzero constant and $\widehat X_0$ is an arbitrary solution of~$\mathcal E$.
The compositions of symmetry transformations of~$\mathcal E$ and projections of transformations
from~$G^\sim_{\mathcal A_1}$ (resp.\ \smash{$\widehat G^\sim_{\mathcal A_1}$}) to the variable space
are at most of the form~\eqref{1stArnoldEquivTrans}.
At the same time, the equation~$\mathcal E$ has nonconstant solution.
This implies that some admissible transformations of~$\mathcal E$, which are of the form~\eqref{1stArnoldAdmTrans},
are not generated by the above compositions.
\looseness=-1
\end{proof}

\begin{remark}
Analogously to Proposition~\ref{proposition2},
the classes~$\mathcal A_1$ and~$\widehat{\mathcal A}_1$ with $r=2$ are semi-normalized but not normalized 
since they are orbits of the free particle equation $x''=0$
with respect to the equivalence groups~$G^\sim_{\mathcal A_1}$ and~$\widehat G^\sim_{\mathcal A_1}$, respectively.
\end{remark}

\subsection{Second Arnold form}

Using a transformation of the form~\eqref{EqArnoldTrans} with $\varphi_0=0$,
we can set additionally $a_1=0$ in any equation from the class~$\mathcal A_1$.
In this way the class~$\mathcal A_1$ (and thus the entire class~$\mathcal L$)
is mapped onto its subclass~$\mathcal A_2$ that consists of the equations of the form
\begin{equation}\label{2st_Arnold's_form}
x^{(r)}+a_{r-1}(t)x^{(r-1)}+\dots+a_2(t)x^{(2)}=b(t),
\end{equation}
called the \emph{second Arnold form}.

\begin{proposition}\label{proposition8}
The  equivalence groupoid of the subclass~$\mathcal A_2$, where $r\geqslant3$, is constituted by
the admissible transformations  whose equations-sources exhaust the whole class~$\mathcal A_2$ and 
whose transformational parts are of the form
\begin{gather}\label{2ndArnoldAdmTrans}
\tilde t=\frac{\psi_2(t)}{\psi_1(t)}, \quad \tilde x=\frac{x}{\psi_1(t)}+X_0(t),
\end{gather}
where $\psi_1=\psi_1(t)$ and $\psi_2=\psi_2(t)$ are arbitrary linearly independent solutions of the homogeneous equation 
associated with the corresponding equation-source, and $X_0$ is an arbitrary smooth function of~$t$.
\end{proposition}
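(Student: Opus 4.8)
The plan is to mimic the structure of the proof of Proposition~\ref{proposition6}, adapting it to account for the additional gauge $a_1=0$ on top of $a_0=0$. First I would take a point transformation~$\mathcal T$ relating two equations~$\mathcal E$ and~$\tilde{\mathcal E}$ of the form~\eqref{2st_Arnold's_form}. Since $\mathcal A_2\subset\mathcal A_1\subset\mathcal L$, Proposition~\ref{proposition3} already forces $\mathcal T$ to have the form~\eqref{trans1}, i.e.\ $\tilde t=T(t)$, $\tilde x=X_1(t)x+X_0(t)$ with $T_tX_1\ne0$; and by Proposition~\ref{proposition6} the source equation~$\mathcal E$, being in $\mathcal A_1$, gives that $\psi_1:=1/X_1$ solves the homogeneous equation associated with~$\mathcal E$. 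So the whole task reduces to identifying the extra restriction on~$T$ imposed by the vanishing of both~$\tilde a_1$ and~$\tilde a_0$.

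The key step is then a coefficient-comparison computation. I would substitute the expressions for~$\tilde x$ and its~$\tilde t$-derivatives (via the operator $(DT)^{-1}D$ as in the proof of Proposition~\ref{proposition3}) into~$\tilde{\mathcal E}$, use~$\mathcal E$ to eliminate~$x^{(r)}$, and then split with respect to~$x$ and its derivatives. The condition $\tilde a_0=0$ is automatic from the fact that~$\mathcal E\in\mathcal A_1$ forces $\psi_1=1/X_1$ to be a homogeneous solution (exactly the $\mathcal A_1$ argument). The new information comes from the coefficient of~$x$ after that elimination, whose vanishing — using that $1/X_1$ is already a homogeneous solution of~$\mathcal E$ — should reduce to a condition on~$X_1$, $X_0$ and~$T$ alone. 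The natural guess, dictated by the form~\eqref{2ndArnoldAdmTrans} one wants to arrive at, is that $T$ must be the ratio $\psi_2/\psi_1$ of two linearly independent homogeneous solutions of~$\mathcal E$: indeed, writing $\psi_2:=T\psi_1$, linear independence of $\psi_1$ and $\psi_2$ is equivalent to $T_t\ne0$ (their Wronskian is $W(\psi_1,\psi_2)=\psi_1^2T_t$), so the claim is precisely that $\psi_2=T/X_1$ is a second homogeneous solution. I would verify this by noting that the Arnold transformation~\eqref{EqArnoldTrans} with $\varphi_0=0$, $\varphi_1=\psi_1$, $\varphi_2=\psi_2$ maps~$\mathcal E$ into the class~$\mathcal A_2$ (it kills $a_0$ via dividing by a solution and kills $a_1$ via the change of independent variable by a ratio of solutions — this is precisely how $\mathcal A_2$ was defined from $\mathcal A_1$), and that such transformations are of the form~\eqref{2ndArnoldAdmTrans}; conversely any~$\mathcal T$ of the form~\eqref{trans1} landing in~$\mathcal A_2$ must coincide, up to the residual freedom, with one of these. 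Equivalently, one directly checks that the vanishing of the coefficient of~$x$ in the transformed equation, given $\tilde a_0=0$, is the statement that $T/X_1$ satisfies the homogeneous equation of~$\mathcal E$.

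Finally I would record that, conversely, every transformation of the form~\eqref{2ndArnoldAdmTrans} with $\psi_1,\psi_2$ linearly independent homogeneous solutions of a given $\mathcal E\in\mathcal A_2$ does map~$\mathcal E$ into~$\mathcal A_2$: dividing by the solution $\psi_1$ removes $a_0$, and reparameterizing by $\psi_2/\psi_1$ removes $a_1$, exactly as in the construction of the second Arnold form; the remaining arbitrary element~$X_0$ contributes only to the transformation component for~$b$. This establishes that the equations-sources exhaust all of~$\mathcal A_2$ and the transformational parts are exactly those in~\eqref{2ndArnoldAdmTrans}.

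The main obstacle I expect is the bookkeeping in the coefficient-of-$x$ computation: one must carefully carry the terms produced by $X_0$ and by the non-leading coefficients $a_2,\dots,a_{r-1}$ through the operator $(DT)^{-1}D$, and then recognize the resulting identity as ``$T/X_1$ is a homogeneous solution'' rather than something weaker. The cleanest way around this is probably to avoid the brute-force split entirely and argue structurally: compose~$\mathcal T$ with the known Arnold-type transformation that takes~$\tilde{\mathcal E}$ back to~$\mathcal A_1$-normal form, reduce to Proposition~\ref{proposition6}, and then read off the constraint on~$T$ from the extra condition $\tilde a_1=0$ — which, since $\mathcal A_2$ is obtained from $\mathcal A_1$ precisely by an $a_1$-gauge of the type~\eqref{EqArnoldTrans} with $\varphi_0=0$, pins $T$ down to a ratio of homogeneous solutions.
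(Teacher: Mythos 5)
Your proposal is correct and, in its ``structural'' form, coincides with the paper's proof: the paper reduces to Proposition~\ref{proposition6} to get the form~\eqref{1stArnoldAdmTrans}, then simply observes that $\tilde\psi_2=\tilde t$ is a solution of the homogeneous equation associated with~$\tilde{\mathcal E}$ (since $\tilde a_0=\tilde a_1=0$), so that $\psi_2=\psi_1T$ is a solution of the homogeneous equation of~$\mathcal E$ and hence $T=\psi_2/\psi_1$. The coefficient-comparison computation you worry about is entirely unnecessary --- pulling back the solution $\tilde t$ does all the work --- so the ``cleanest way around'' you describe at the end is exactly the intended argument.
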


\begin{proof}
Suppose that a point transformation~$\mathcal T$ connects two equations~$\mathcal E$ and~$\tilde{\mathcal E}$ from the class~$\mathcal A_2$.
Then the transformation~$\mathcal T$ has the form~\eqref{1stArnoldAdmTrans}.
Note that the function $\tilde\psi_2=\tilde t$ is a solution of the homogeneous equation associated with~$\tilde{\mathcal E}$.
Hence the function $\psi_2=\psi_1T$ is a solution of the homogeneous equation corresponding to~$\mathcal E$, i.e., $T=\psi_2/\psi_1$.
As a result, the transformation~$\mathcal T$ is of the form~\eqref{2ndArnoldAdmTrans}.
\end{proof}

\begin{proposition}\label{proposition9}
The equivalence group~$G^\sim_{\mathcal A_2}$ of the subclass~$\mathcal A_2$, where $r\geqslant3$,
consists of the transformations whose projections to the variable space have the form
\begin{gather}\label{2ndArnoldEquivTrans}
\tilde t=\frac{\alpha t+\beta}{\gamma t+\delta}, \quad \tilde x=\frac{x}{\gamma t+\delta}+X_0(t),
\end{gather}
where $\alpha$, $\beta$, $\gamma$ and $\delta$ are arbitrary constants with $\alpha\delta-\beta\gamma\ne0$ 
and $X_0$ is arbitrary smooth functions of~$t$.
\end{proposition}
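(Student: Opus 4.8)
The plan is to deduce this from Proposition~\ref{proposition8}, which already gives the complete equivalence groupoid of~$\mathcal A_2$. By definition, the projection to the $(t,x)$-space of an element of~$G^\sim_{\mathcal A_2}$ is a point transformation~$\mathcal T$ that is the transformational part of an admissible transformation for \emph{every} equation of~$\mathcal A_2$ chosen as the source. First I would invoke Proposition~\ref{proposition8} to put~$\mathcal T$ into the form~\eqref{2ndArnoldAdmTrans}; here the functions $\psi_1$ and $\psi_2=\psi_1T$ are determined by~$\mathcal T$ alone and do not depend on the arbitrary elements of the source. Hence, as the source runs over the whole class, $\psi_1$ and $\psi_2$ must be (linearly independent) solutions of the homogeneous equation $\psi^{(r)}+a_{r-1}\psi^{(r-1)}+\dots+a_2\psi^{(2)}=0$ simultaneously for all admissible values of $a_{r-1},\dots,a_2$.

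Next I would split this requirement with respect to the arbitrary elements: varying $a_{r-1},\dots,a_2$ independently forces $\psi^{(k)}\equiv0$ for every $k$ from $2$ to $r$, which reduces to the single condition $\psi_{tt}=0$. Thus $\psi_1$ and $\psi_2$ are affine functions of~$t$, say $\psi_1=\gamma t+\delta$ and $\psi_2=\alpha t+\beta$ with constants, and their linear independence is equivalent to $\alpha\delta-\beta\gamma\ne0$. Substituting into~\eqref{2ndArnoldAdmTrans} produces exactly the form~\eqref{2ndArnoldEquivTrans}. For the converse inclusion I would note that the affine functions $\gamma t+\delta$ and $\alpha t+\beta$ have all derivatives of order~$\geqslant2$ equal to zero, hence solve the homogeneous equation associated with \emph{any} equation of~$\mathcal A_2$, and are linearly independent precisely when $\alpha\delta-\beta\gamma\ne0$; Proposition~\ref{proposition8} then makes every transformation of the form~\eqref{2ndArnoldEquivTrans} admissible for an arbitrary source in~$\mathcal A_2$. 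Its prolongation to the arbitrary elements (a cumbersome but routine computation via Fa\`a di Bruno's formula) is a point transformation in the joint space, so these prolongations form~$G^\sim_{\mathcal A_2}$.

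Given Proposition~\ref{proposition8}, the calculation is essentially bookkeeping and I do not expect a genuine obstacle; this is the natural second-Arnold-form analogue of Proposition~\ref{proposition7}, with the one-dimensional space of constant common solutions there replaced by the two-dimensional space of affine common solutions here. The one step to handle with some care is the splitting with respect to the arbitrary elements: it is precisely the passage from ``$\psi$ solves the homogeneous equation for one fixed source'' to ``$\psi$ solves it for all coefficient functions $a_{r-1},\dots,a_2$'' that forces $\psi_{tt}=0$ and thereby cuts the infinite-dimensional freedom of~$T$ in~\eqref{2ndArnoldAdmTrans} down to the four-parameter fractional-linear family in~\eqref{2ndArnoldEquivTrans}.
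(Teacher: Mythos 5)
Your proof is correct, and its overall architecture coincides with the paper's: reduce to the form~\eqref{2ndArnoldAdmTrans} via Proposition~\ref{proposition8}, observe that affine $\psi_1,\psi_2$ work for every source, and then exclude everything else. The one place where you genuinely diverge is the exclusion step. The paper argues by contradiction with a witness equation, in two cases: if $\psi_1$ is non-affine it picks a homogeneous equation from~$\mathcal A_2$ not solved by~$\psi_1$ and checks that the image has $\tilde a_0\ne0$; if $\psi_1$ is affine but $\psi_2$ is not, it picks an equation not solved by~$\psi_2$ and checks $\tilde a_0=0$ but $\tilde a_1\ne0$. You instead use Proposition~\ref{proposition8} as a complete (``only if'') characterization of the groupoid, note that $\psi_1$ and $\psi_2=\psi_1T$ are determined by the transformation alone, and split the resulting condition ``$\psi_i$ solves $\psi^{(r)}+a_{r-1}\psi^{(r-1)}+\dots+a_2\psi''=0$ for all coefficient tuples'' with respect to the arbitrary elements to get $\psi_i''=0$ directly. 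This treats $\psi_1$ and $\psi_2$ symmetrically, avoids the case distinction, and makes explicit the fact the paper uses implicitly (that a function solving every homogeneous equation of the class must be affine); the paper's version has the minor advantage of exhibiting concretely which coefficient of the transformed equation fails to vanish. Your closing remark about the parallel with Proposition~\ref{proposition7} (constants there, affine functions here) is exactly the right way to see the statement.
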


\begin{proof}
%\looseness=1
Any transformation from the group~$G^\sim_{\mathcal A_2}$ generates a family of elements from the equivalence groupoid of the subclass~$\mathcal A_2$,
and hence its projection to the variable space has the form~\eqref{2ndArnoldAdmTrans}.
Since all affine functions of~$t$ are solutions of any homogeneous equation from the subclass~$\mathcal A_2$,
then the group~$G^\sim_{\mathcal A_2}$ contains
the transformations whose restrictions on the variable space are of the form~\eqref{2ndArnoldEquivTrans}.
We prove that there are no other transformations in the group~$G^\sim_{\mathcal A_2}$, i.e.,\
a~transformation~$\mathcal T$ of the form~\eqref{2ndArnoldAdmTrans} does not correspond to an equivalence transformation of the class~$\mathcal A_2$
if at least one of the parameter-functions $\psi_1$ or $\psi_2$ is a nonlinear function of~$t$.

First consider the case where $\psi_1$ is nonlinear.
We take a homogeneous equation $\mathcal E$ from the subclass~$\mathcal A_2$ that is not satisfied by~$\psi_1$.
Then the corresponding transformed equation $\tilde{\mathcal E}$ possesses no nonvanishing constant solutions and hence its coefficient $\tilde a_0$ is nonzero.
This means that the equation~$\tilde{\mathcal E}$ does not belong to the subclass~$\mathcal A_2$, which gives the necessary statement.

Now consider the complementary case, namely, where the function $\psi_1$ is affine and the function $\psi_2$ is nonlinear.
Then there is a homogeneous equation~$\mathcal E$ of the form~\eqref{2st_Arnold's_form} that is not satisfied by~$\psi_2$.
As~$\psi_1$ is a solution of the equation~$\mathcal E$, 
constant functions satisfy the corresponding transformed equation~$\tilde{\mathcal E}$
and hence its coefficient~$\tilde a_0$ is equal to zero.
Moreover, the function~$\tilde\psi_2\equiv\tilde t$ is not a solution of the equation~$\tilde{\mathcal E}$,
so its coefficient $\tilde a_1$ is nonzero.
Therefore, the equation~$\tilde{\mathcal E}$ is not contained in the subclass~$\mathcal A_2$, which completes the proof.
\end{proof}

Transformational properties of the subclass~$\widehat{\mathcal A}_2$ of homogeneous equations from~$\mathcal A_2$ with $r\geqslant3$
are similar to ones of the class~$\widehat{\mathcal A}_1$.

\begin{corollary}\label{Corollary1On2ndArnoldForm}
The equivalence group~$\widehat G^\sim_{\mathcal A_2}$ of the subclass~$\widehat{\mathcal A}_2$ with $r\geqslant3$ is obtained
from the group~$\smash{G^\sim_{\mathcal A_2}}$ by setting $X_0=0$ and neglecting the transformation component for the arbitrary element~$b$.
A~point transformation relates two equations from this subclass
if and only if it has the form~\eqref{2ndArnoldAdmTrans},
where the product~$\psi_1X_0$ is a solution of the corresponding initial equation.
\end{corollary}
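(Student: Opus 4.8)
The plan is to derive both assertions from Propositions~\ref{proposition8} and~\ref{proposition9}, in complete parallel with the way the first Arnold form statements (Proposition~\ref{proposition7} and Corollary~\ref{Corollary1On1stArnoldForm}) were obtained: I would establish the groupoid statement (the second sentence) first and then read off the description of the equivalence group from it together with Proposition~\ref{proposition9}.

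For the groupoid, let $\mathcal T$ be a point transformation relating two equations $\mathcal E$ and $\tilde{\mathcal E}$ lying in $\widehat{\mathcal A}_2$. Since $\widehat{\mathcal A}_2\subset\mathcal A_2$, Proposition~\ref{proposition8} applies and shows that $\mathcal T$ has the form~\eqref{2ndArnoldAdmTrans} with $\psi_1,\psi_2$ linearly independent solutions of the homogeneous equation associated with $\mathcal E$; as $\mathcal E$ is itself homogeneous, this simply means that $\psi_1,\psi_2$ are linearly independent solutions of $\mathcal E$. The one remaining point is to turn ``$\tilde{\mathcal E}$ is homogeneous'' into a condition on $X_0$. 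An equation from $\mathcal A_2$ is homogeneous exactly when the function $\tilde x\equiv0$ is one of its solutions, and pulling this solution back through the change of variables~\eqref{2ndArnoldAdmTrans} (i.e.\ solving $x/\psi_1+X_0=0$) yields the function $x=-\psi_1X_0$, which must therefore be a solution of $\mathcal E$; by linearity of $\mathcal E$ this is equivalent to $\psi_1X_0$ being a solution of $\mathcal E$. This gives the ``only if'' direction. For the converse, if $\mathcal T$ has the form~\eqref{2ndArnoldAdmTrans} with $\psi_1,\psi_2$ linearly independent solutions of $\mathcal E\in\widehat{\mathcal A}_2$ and with $\psi_1X_0$ a solution of $\mathcal E$, then Proposition~\ref{proposition8} already yields $\mathcal T(\mathcal E)\in\mathcal A_2$, and the same computation shows that the $\mathcal T$-image of the solution $-\psi_1X_0$ of $\mathcal E$ is $\tilde x\equiv0$, so that $\mathcal T(\mathcal E)$ is homogeneous, i.e.\ $\mathcal T(\mathcal E)\in\widehat{\mathcal A}_2$.

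For the first sentence I would use that a point transformation corresponds to an element of $\widehat G^\sim_{\mathcal A_2}$ precisely when, by the groupoid description just obtained, it is admissible for \emph{every} equation of $\widehat{\mathcal A}_2$ taken as source. Since the functions $\psi_1$, $\psi_2$ and $\psi_1X_0$ attached to a fixed transformation do not vary with the equation, this forces each of them to be a solution of every homogeneous equation $x^{(r)}+a_{r-1}x^{(r-1)}+\dots+a_2x^{(2)}=0$ of the form~\eqref{2st_Arnold's_form}. Splitting such an equation with respect to the arbitrary elements $a_2,\dots,a_{r-1}$, whose values at a point may be prescribed independently, immediately gives $f^{(2)}=\dots=f^{(r)}=0$ for any common solution $f$, i.e.\ $f$ is affine; and conversely every affine function is a common solution. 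Hence $\psi_1=\gamma t+\delta$ and $\psi_2=\alpha t+\beta$ with $\alpha\delta-\beta\gamma\ne0$, which reproduces the $t$-component of~\eqref{2ndArnoldEquivTrans} and the factor $(\gamma t+\delta)^{-1}$ in front of $x$, while $X_0$ is constrained only by $(\gamma t+\delta)X_0$ being affine. A transformation with $(\gamma t+\delta)X_0$ a nonzero affine function differs from the one obtained by putting $X_0=0$ by a shift $x\mapsto x+(\text{affine function of }t)$ at $\tilde t=t$, which preserves every equation of $\widehat{\mathcal A}_2$ (no term $x$ or $x^{(1)}$ occurs in~\eqref{2st_Arnold's_form}) and so acts trivially on the coefficients $a_2,\dots,a_{r-1}$; consequently, on the arbitrary elements, $\widehat G^\sim_{\mathcal A_2}$ is exactly~\eqref{2ndArnoldEquivTrans} with $X_0=0$. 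Finally $b\equiv0$ on $\widehat{\mathcal A}_2$, so the transformation component for $b$ inherited from $G^\sim_{\mathcal A_2}$ is simply discarded, which is the asserted relation between $\widehat G^\sim_{\mathcal A_2}$ and $G^\sim_{\mathcal A_2}$.

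There is no serious obstacle here: the corollary is genuinely a consequence of Propositions~\ref{proposition8} and~\ref{proposition9}, the only inputs beyond routine bookkeeping being the homogeneity criterion ``$\tilde x\equiv0$ is a solution'' together with the explicit pull-back through~\eqref{2ndArnoldAdmTrans}, and the elementary splitting argument that the affine functions are the only universal solutions of the second Arnold form homogeneous equations. The one step deserving a moment's attention is the reduction to $X_0=0$ in the equivalence group, namely the remark that the admissible shifts $x\mapsto x+(\text{affine})$ act trivially on the coefficients; modulo these replacements the argument is word for word the one behind Corollary~\ref{Corollary1On1stArnoldForm}, with ``affine'' replaced by ``constant''.
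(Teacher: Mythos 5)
Your argument is correct and is exactly the route the paper intends: the corollary is stated without proof and is meant to be read off from Propositions~\ref{proposition8} and~\ref{proposition9} in the same way that Corollary~\ref{Corollary1On1stArnoldForm} follows from Propositions~\ref{proposition6} and~\ref{proposition7}, and your two key observations (homogeneity of the target is equivalent to $\tilde x\equiv0$ being a solution, which pulls back to $-\psi_1X_0$ solving the source; the universal solutions of all homogeneous second-Arnold-form equations are exactly the affine functions) are the right ones. Your extra care about the first sentence is in fact warranted: as you note, every transformation of the form~\eqref{2ndArnoldEquivTrans} with $(\gamma t+\delta)X_0$ a nonzero affine function preserves each equation of $\widehat{\mathcal A}_2$ individually and hence belongs, strictly speaking, to $\widehat G^\sim_{\mathcal A_2}$, so the literal claim ``$X_0=0$'' holds only modulo these shifts, which act trivially on the arbitrary elements; the same caveat applies to the paper's Corollary~\ref{Corollary1On1stArnoldForm} with ``affine'' replaced by ``constant''. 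This is a minor imprecision of the paper rather than a gap in your argument, and your resolution of it is sound.
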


\begin{corollary}\label{Corollary2On2ndArnoldForm}
The classes~$\mathcal A_2$ and~$\widehat{\mathcal A}_2$ with $r\geqslant3$ are not semi-normalized.
\end{corollary}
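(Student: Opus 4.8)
The plan is to reuse the strategy of the proof of Corollary~\ref{Corollary2On1stArnoldForm}, this time exploiting the extra rigidity of the $x$-component of the equivalence transformations~\eqref{2ndArnoldEquivTrans} as compared with that of the admissible transformations~\eqref{2ndArnoldAdmTrans}.

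First I would fix an equation~$\mathcal E$ in $\widehat{\mathcal A}_2\subset\mathcal A_2$ whose point symmetry group consists only of the transformations $\tilde t=t$, $\tilde x=\widehat Cx+\widehat X_0(t)$ associated with the linearity and the homogeneity of~$\mathcal E$, where $\widehat C$ is a nonzero constant and $\widehat X_0$ is a solution of~$\mathcal E$. The existence of such an~$\mathcal E$ is justified exactly as in the footnote to the proof of Corollary~\ref{Corollary2On1stArnoldForm}: since similar homogeneous equations have similar point symmetry groups and admissible transformations within~$\widehat{\mathcal L}$ map symmetries of linearity/homogeneity type to symmetries of the same type, it suffices to produce an equation with trivial point symmetries in~$\widehat{\mathcal L}_2$ (such equations exist even among those with polynomial coefficients, by Corollary~\ref{CorollaryOnEquivGroupAndEquivGroupoidOfLinHomogenODEsInLaguerreForsythFormOfOrderGreaterThan2}) and then to push it forward into~$\widehat{\mathcal A}_2$ by an Arnold transformation $\tilde t=\varphi_2/\varphi_1$, $\tilde x=x/\varphi_1$ with $\varphi_1,\varphi_2$ linearly independent solutions, which indeed lands in~$\widehat{\mathcal A}_2$ and does not create new point symmetries.

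Next I would argue by contradiction. Suppose $\mathcal A_2$ (resp.~$\widehat{\mathcal A}_2$) were semi-normalized, and consider an admissible transformation with source~$\mathcal E$. By Proposition~\ref{proposition8} its transformational part is of the form~\eqref{2ndArnoldAdmTrans} with linearly independent solutions $\psi_1,\psi_2$ of~$\mathcal E$; here I would take $\psi_1$ to be \emph{non-affine}, which is possible since for $r\geqslant3$ the solution space of~$\mathcal E$ is at least three-dimensional and, containing $1$ and~$t$, also contains a solution with non-vanishing second derivative (for~$\widehat{\mathcal A}_2$ one may in addition take $X_0=0$, as permitted by Corollary~\ref{Corollary1On2ndArnoldForm}). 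On the other hand, semi-normalization would express this transformation as a composition $\check{\mathcal T}\hat{\mathcal T}$, where $\check{\mathcal T}$ is the projection of an element of~$G^\sim_{\mathcal A_2}$ (resp.~$\widehat G^\sim_{\mathcal A_2}$) and hence has the form~\eqref{2ndArnoldEquivTrans} by Proposition~\ref{proposition9} (resp.~Corollary~\ref{Corollary1On2ndArnoldForm}), while $\hat{\mathcal T}$ is a point symmetry of~$\mathcal E$ and therefore, by the choice of~$\mathcal E$, has the form $\tilde t=t$, $\tilde x=\widehat Cx+\widehat X_0(t)$. A direct substitution shows that the $x$-component of any such composition equals $\widehat C(\gamma t+\delta)^{-1}x$ plus a function of~$t$ alone; in particular its coefficient of~$x$ is the reciprocal of an affine function of~$t$ (the same conclusion holds if instead a point symmetry of the target is composed on the left). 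Since $1/\psi_1$ is not the reciprocal of an affine function, this is impossible, so neither $\mathcal A_2$ nor $\widehat{\mathcal A}_2$ is semi-normalized for $r\geqslant3$.

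I expect the only point requiring care to be the existence of the ``generic'' equation~$\mathcal E$ with no point symmetries beyond those forced by linearity and homogeneity, but this is inherited from the proof of Corollary~\ref{Corollary2On1stArnoldForm} with no extra work. Everything else reduces to the elementary observation that composing~\eqref{2ndArnoldEquivTrans} with a fiber-preserving affine symmetry can only produce transformations whose coefficient of~$x$ is the reciprocal of an affine function, whereas~\eqref{2ndArnoldAdmTrans} allows the reciprocal of an arbitrary solution of the source equation, and for $r\geqslant3$ such solutions need not be affine.
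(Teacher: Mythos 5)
Your argument is correct and follows essentially the same route as the paper's proof: pick an equation in $\widehat{\mathcal A}_2$ whose point symmetries are only those forced by linearity and homogeneity, take an admissible transformation~\eqref{2ndArnoldAdmTrans} with $\psi_1$ non-affine, and observe that any composition of an equivalence transformation~\eqref{2ndArnoldEquivTrans} with such a symmetry has $x$-coefficient equal to the reciprocal of an affine function, which $1/\psi_1$ is not. Your added justifications (existence of the generic equation via push-forward from $\widehat{\mathcal L}_2$, and of a non-affine solution since the $r$-dimensional solution space already contains $1$ and $t$) are sound and merely make explicit what the paper leaves implicit.
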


\begin{proof}
Following the proof of Corollary~\ref{Corollary2On1stArnoldForm},
consider an admissible transformation $(\mathcal E_1,\mathcal E_2,\mathcal T)$ in the class~\smash{$\widehat{\mathcal A}_2\subset\mathcal A_2$},
where the point symmetry group of~$\mathcal E_1$  consists only of the transformations related to the linearity and the homogeneity of~$\mathcal E_1$,
in the representation~\eqref{2ndArnoldAdmTrans} for~$\mathcal T$ the solution~$\psi_1$ of~$\mathcal E_1$ is not affine in~$t$, $\psi_1''\ne0$,
and $\mathcal E_2=\mathcal T(\mathcal E_1)$.
The compositions of symmetry transformations of~$\mathcal E_1$ with projections of transformations
from~$G^\sim_{\mathcal A_2}$ (resp.\ \smash{$\widehat G^\sim_{\mathcal A_2}$}) to the variable space
are at most of the form~\eqref{2ndArnoldEquivTrans}.
Therefore, the admissible transformation $(\mathcal E_1,\mathcal E_2,\mathcal T)$ is not generated by one of the above compositions.
\end{proof}

\begin{remark}
For $r=2$, the classes~$\mathcal A_2$ and~$\widehat{\mathcal A}_2$ coincide
with the classes~$\mathcal L_2$ and~$\widehat{\mathcal L}_2$, respectively.
\end{remark}

\section{Group classification}\label{Group_classification}

As remarked earlier, any second-order linear ODE can be reduced to the free particle equation $x''=0$ (see, e.g.,~\cite{Lie1888}),
which admits the eight-dimensional Lie invariance algebra
\begin{gather*}
\langle \p_t, \ \p_x, \ t\p_t, \ x\p_t, \ t\p_x, \ x\p_x, \ tx\p_t+x^2\p_x, \ t^2\p_t+tx\p_x\rangle.
\end{gather*}
This gives the exhaustive group classification of second-order linear ODEs.

Let $r\geqslant3$. It is also a classical result by Sophus Lie~\cite[pp.~296--298]{Lie1893} that the dimension of
Lie invariance algebras for $r$th order ODEs with $r\geqslant3$ is not greater than $r+4$.
Much later this result was partially reproved in \cite{Krause&Michel,Mahomed&Leach1990} only for linear ODEs.

Any inhomogeneous linear ODE can be reduced to the corresponding homogeneous equation
by a transformation from the equivalence group $G^\sim$.
The class~$\mathcal L$ with $r\geqslant3$ is normalized 
and its subclass~$\widehat{\mathcal L}$ of homogeneous equations is uniformly semi-normalized with respect to linear superposition of solutions.
Therefore, for solving the group classification problem for the class~$\mathcal L$
it suffices to solve the similar problem for the subclass~$\widehat{\mathcal L}$.

Consider an $r$th order linear homogeneous ODE~$\mathcal E$.
Due to linearity leading to the linear superposition principle,
this equation admits the $r$-dimensional abelian Lie invariance
algebra~$\mathfrak g_{\rm a}^{\mathcal E}$ spanned by the vector fields
\begin{gather}\label{fund set of solution}
\varphi_1(t)\p_x, \ \varphi_2(t)\p_x, \ \dots, \ \varphi_r(t)\p_x ,
\end{gather}
where the functions $\varphi_i=\varphi_i(t)$, $i=1,\dots,r$, form a fundamental set of solutions of the equation~$\mathcal E$.
By virtue of homogeneity, the equation~$\mathcal E$ also admits
the one-parameter symmetry group of scale transformations generated by the vector field $x\p_x$.
Therefore, each equation~$\mathcal E$ from the subclass~$\widehat{\mathcal L}$ admits the $(r{+}1)$-dimensional Lie invariance algebra
\begin{gather}\label{(r+1)-dim}
\mathfrak g_0^{\mathcal E}= \langle x\p_x, \ \varphi_1(t)\p_x, \ \varphi_2(t)\p_x, \ \dots, \ \varphi_r(t)\p_x \rangle.
\end{gather}
Corollary~\ref{CorollaryOnEquivGroupoidOfLinHomogenODEsOfOrderGreaterThan2} implies that
any Lie symmetry operator $Q$ of the equation~$\mathcal E$ is of the general form $Q=\tau(t)\p_t+(\xi_1(t)x+\xi_0(t))\p_x$,
where $\tau$, $\xi_1$ and $\xi_0$ are smooth functions of~$t$,
and $\xi_0$ is additionally a solution of~$\mathcal E$.
Hence the maximal Lie invariance algebra~$\mathfrak g^{\mathcal E}$ of~$\mathcal E$ contains the subalgebra~$\mathfrak g_0^{\mathcal E}$ as an ideal. 
This is why the group classification of the class~$\mathcal L$ means 
the classification of the quotient algebras $\mathfrak g^{\mathcal E}/\mathfrak g_0^{\mathcal E}$, where~$\mathcal E$ runs through~$\mathcal L$, 
up to  $G^\sim$-equivalence.

Now we carry out the group classification of linear ODEs in three different ways, which respectively involve
the Laguerre--Forsyth form~\eqref{ODE2}, the rational form~\eqref{ODE1} and
Lie's classification of realizations of finite-dimensional Lie algebras by vector fields in the space of two variables.

\subsection{The first way: Laguerre--Forsyth form}

The group classification of the class~$\widehat{\mathcal L}$ can be further reduced
to the group classification of its subclass~$\widehat{\mathcal L}_2$
of $r$th order homogeneous linear ODEs in the Laguerre--Forsyth form,
which is singled out from~$\widehat{\mathcal L}$ by the constraint $a_{r-1}=a_{r-2}=0$.
Indeed, both the arbitrary elements $a_{r-1}$ and $a_{r-2}$ can be gauged to zero by a family of point transformations,
which are parameterized by these arbitrary elements and are associated with equivalence transformations.
Moreover, equations from the class~$\widehat{\mathcal L}$ are $\widehat G^\sim$-equivalent if and only if
their images in the class~$\widehat{\mathcal L}_2$ are $\widehat G^\sim_2$-equivalent.

In view of Corollary~\ref{CorollaryOnEquivGroupAndEquivGroupoidOfLinHomogenODEsInLaguerreForsythFormOfOrderGreaterThan2},
the class~\smash{$\widehat{\mathcal L}_2$} is not normalized.
At the same time, it is uniformly semi-normalized with respect to linear superposition of solutions
(i.e., with respect to the symmetry groups of its equations related to linear superposition of their solutions).
This suffices for using an advanced version of the algebraic method to group classification of the class~$\widehat{\mathcal L}_2$.
The equivalence group~$\widehat G^\sim_2$ of~$\widehat{\mathcal L}_2$ consists of the transformations
whose projections to the variable space are of the form~\eqref{trans3} with $X_0=0$.
The scalings of~$x$ constitute the kernel group~$\widehat G^\cap_2$ of~$\widehat{\mathcal L}_2$
since they are only common point symmetry transformations for all equations from the class~$\widehat{\mathcal L}_2$.
By trivial prolongation to the arbitrary elements,
the group~$\widehat G^\cap_2$ is embedded into~$\widehat G^\sim_2$ as a normal subgroup.
The factor group $\widehat G^\sim_2/\widehat G^\cap_2$ can by identified with the subgroup~$H$ of~$\widehat G^\sim_2$
singled out by the constraint $C=1$, where additionally the expression $\alpha\delta-\beta\gamma$ is equal to 1 or $\pm1$ 
in the complex ($\mathbb F=\mathbb C$) or the real ($\mathbb F=\mathbb R$) case, respectively.
Up to point symmetry transformations associated with the linear superposition principle and homogeneity, i.e., related to the Lie algebra~$\mathfrak g_0^{\mathcal E}$,
all admissible transformations of the equation~$\mathcal E$ within~$\widehat{\mathcal L}_2$ are exhausted by the projections of elements from~$H$ to the variable space.
The subgroup~$H$ is isomorphic to the projective general linear group~$\mathrm{PGL}(2,\mathbb F)$ of fractional linear transformations of~$t$.

Therefore, all possible Lie symmetry extensions within the class~$\widehat{\mathcal L}_2$
are necessarily associated with subgroups of~$\mathrm{PGL}(2,\mathbb F)$.
In infinitesimal terms, the maximal Lie invariance algebra of the equation~$\mathcal E$ is a semidirect sum of the algebra~$\mathfrak g_0^{\mathcal E}$
and a subalgebra of the realization of~$\mathfrak{sl}(2,\mathbb F)$ spanned by the vector fields
\[
\mathcal P=\p_t,\quad
\mathcal D=t\p_t+\tfrac 12 (r-1)x\p_x,\quad
\mathcal K=t^2\p_t+(r-1)tx\p_x.
\]
Subalgebras of~$\mathfrak{sl}(2,\mathbb F)$ are well known (see, e.g.,~\cite{Patera1977a} or the appendix in the arXiv version of~\cite{Popovych&Boyko&Nesterenko&Lutfullin2003}).
Up to internal automorphisms of~$\mathfrak{sl}(2,\mathbb F)$,
a complete list of subalgebras %of the above realization
of~$\mathfrak{sl}(2,\mathbb F)$ is exhausted by the zero subalgebra $\{0\}$,
the one-dimensional subalgebras $\langle\mathcal P\rangle$, $\langle\mathcal D\rangle$ and, only for $\mathbb F=\mathbb R$, $\langle\mathcal P+\mathcal K\rangle$,%
\footnote{%
The subalgebra $\langle\mathcal P+\mathcal K\rangle$ is equivalent to $\langle\mathcal P\rangle$ if $\mathbb F=\mathbb C$.
This is why the part of the consideration corresponding to the subalgebra $\langle\mathcal P+\mathcal K\rangle$ can be then just neglected in the complex case.
\vspace{.5ex}
}
the two-dimensional subalgebra $\langle \mathcal P, \, \mathcal D\rangle$
and the entire realization $\langle \mathcal P, \, \mathcal D, \, \mathcal K\rangle$.
The equivalence of subalgebras of~$\mathfrak{sl}(2,\mathbb F)$
well agrees with the similarity of equations within the class~$\widehat{\mathcal L}_2$.

The zero subalgebra $\{0\}$ corresponds to the general case with no extension.

For one-dimensional extensions of algebras of the form~\eqref{(r+1)-dim} by the subalgebras $\langle\mathcal P\rangle$,
$\langle\mathcal D\rangle$ and $\langle\mathcal P+\mathcal K\rangle$,
the corresponding equations from the class~$\widehat{\mathcal L}_2$ respectively take the forms
\begin{gather}
x^{(r)}+c_{r-3}x^{(r-3)}+\dots+c_1 x'+c_0 x=0, \label{linear-constant-equation} \\
x^{(r)}+c_{r-3}t^{-3}x^{(r-3)}+\dots+c_1 t^{-r+1} x'+c_0 t^{-r}x=0,\label{linear-Euler-equation}\\
x^{(r)}+q_{r-3}(t)x^{(r-3)}+\dots+q_1(t) x'+q_0 (t) x =0, \label{linear-projective-equation}
\end{gather}
where $c_0, \dots,c_{r-3}$ are arbitrary constants,
\begin{gather*}
q_{r-3}(t)=\frac{c_{r-3}}{(1+t^2)^3}, \\
q_{m}(t)=\frac{c_m}{(1+t^2)^{r-m}}
-\frac{(m+1)(r{-}m{-}1)}{(1+t^2)^{r-m}}\int \big(1+t^2\big)^{r-m-1}q_{m+1}(t)dt, \quad m=r{-}4,\dots,0,
\end{gather*}
and the integral denotes a fixed antiderivative.
However, by the point transformations%
\footnote{%
For the first transformation, the absolute value of~$t$ should be substituted instead of~$t$ in the real case
or branches of the ln and, if $r$ is even, power functions should be fixed in the complex case.
}
\begin{gather*}
\tilde t=\ln t, \ \tilde x =xt^{-(r-1)/2} \quad \text{and} \quad  \tilde t=\arctan t, \ \tilde x =x\big(1+t^2\big)^{-(r-1)/2}
 \end{gather*}
the maximal Lie invariance algebras of equations of the form~\eqref{linear-Euler-equation}
(known as the Euler--Cauchy equation, or just Euler's equation)
and the form~\eqref{linear-projective-equation} are reduced to the algebras looking as
\begin{gather}\label{(r+2)-dim}
\langle \p_{\tilde t}, \ \tilde x\p_{\tilde x}, \
\tilde\varphi_1(\tilde t)\p_{\tilde x}, \ \tilde\varphi_2(\tilde t)\p_{\tilde x}, \ \dots, \ \tilde\varphi_r(\tilde t)\p_{\tilde x} \rangle.
\end{gather}
Moreover, the above transformations map these equations to constant-coefficient equations from the class~\eqref{ODE1},
where $a_{r-2}=-\tfrac{1}{24}r(r^2-1)$ for~\eqref{linear-Euler-equation} and $a_{r-2}=\tfrac 16r(r^2-1)$ for~\eqref{linear-projective-equation}.
Additionally scaling~$t$, we can set $a_{r-2}=-1$ and $a_{r-2}=1$, respectively.
Thus, any equation from~$\widehat{\mathcal L}_2$ that admits an $(r{+}2)$-dimensional Lie invariance algebra
is equivalent to a~homogeneous equation with constant coefficients from the class~\eqref{ODE1},
in which $a_{r-2}=0$, $a_{r-2}=-1$ and $a_{r-2}=1$ for~\eqref{linear-constant-equation},
\eqref{linear-Euler-equation} and \eqref{linear-projective-equation}, respectively.

If an equation from~$\widehat{\mathcal L}_2$ possesses the $(r{+}3)$-dimensional Lie invariance algebra
\begin{gather*}%\label{(r+3)-dim}
\langle \p_t, \ t\p_t+\tfrac 12 (r-1)x\p_x, \ x\p_x, \ \varphi_1(t)\p_x, \ \varphi_2(t)\p_x, \ \dots, \ \varphi_r(t)\p_x \rangle,
\end{gather*}
then it has the form $x^{(r)}=0$ and hence its maximal Lie invariance algebra is $(r{+}4)$-dimensional,
\begin{gather}\label{(r+4)-dim}
\langle \p_t, \ t\p_t+\tfrac 12 (r-1)x\p_x, \ t^2\p_t+(r-1)tx\p_x, \ x\p_x, \ \varphi_1(t)\p_x, \ \varphi_2(t)\p_x, \ \dots, \ \varphi_r(t)\p_x \rangle.
\end{gather}
As the functions $\varphi_1$, \dots, $\varphi_r$ form a fundamental set of solutions of the elementary equation $x^{(r)}=0$,
we can choose $\varphi_i=t^{i-1}$, $i=1,\dots,r$.
Thus, there is no $r$th order linear ODE whose maximal Lie invariance algebra is $(r{+}3)$-dimensional.
Moreover, if such an equation admits an $(r{+}4)$-dimensional Lie invariance algebra,
then it is similar to the elementary equation $x^{(r)}=0$ with respect to a point transformation of the form~\eqref{trans1}.

\subsection{The second way: rational form}

Consider now the subclass~$\widehat{\mathcal L}_1$ of homogeneous ODEs of the rational form~\eqref{ODE1}.
Again, the group classification of the class~$\widehat{\mathcal L}$ reduces to that of the subclass~$\widehat{\mathcal L}_1$
since the gauge $a_{r-1}=0$ singling out the subclass~$\widehat{\mathcal L}_1$ from~$\widehat{\mathcal L}$
is realized by a family of equivalence transformations. 
Equations from the class~$\widehat{\mathcal L}$ are $\widehat G^\sim$-equivalent if and only if
their images in the class~$\widehat{\mathcal L}_1$ are $\widehat G^\sim_1$-equivalent. 
Moreover, the subclass~$\widehat{\mathcal L}_1$ is uniformly semi-normalized with respect to linear superposition of solutions.

Each equation~$\mathcal E$ from~$\widehat{\mathcal L}_1$ possesses
the $(r{+}1)$-dimensional Lie invariance algebra~$\mathfrak g_0^{\mathcal E}$,
which is an ideal of the maximal Lie invariance algebra~$\mathfrak g^{\mathcal E}$ of~$\mathcal E$.
At the same time, in contrast to the Laguerre--Forsyth form,
the equivalence group~$\widehat G_1^\sim$ of~$\widehat{\mathcal L}_1$ is parameterized by an arbitrary function~$T=T(t)$.
Corollary~\ref{CorollaryOnEquivGroupoidOfLinHomogenODEsInRationalFormOfOrderGreaterThan2} implies that
the algebra~$\mathfrak g^{\mathcal E}$ is contained in the algebra $\langle \mathcal R(\tau)\rangle+\mathfrak g_0^{\mathcal E}$,
where plus denotes the sum of vector spaces,
\[\mathcal R(\tau)=\tau(t)\p_t+\tfrac 12(r-1)\tau_t(t)x\p_x,\]
and the parameter~$\tau$ runs through the set of smooth functions of~$t$.
It is easy to see that 
\[[\mathcal R(\tau^1),\mathcal R(\tau^2)]=\mathcal R(\tau^1\tau_t^2-\tau^2\tau^1_t).\]
Hence $\mathfrak g_1^{\mathcal E}:=\langle \mathcal R(\tau)\rangle\cap\mathfrak g^{\mathcal E}$ is
a (finite-dimensional) subalgebra of~$\mathfrak g^{\mathcal E}$.
Each vector field $\mathcal R(\tau)$ is completely defined by its projection $\mathrm{pr}_t\mathcal R(\tau)$
to the space of the variable~$t$, $\mathrm{pr}_t\mathcal R(\tau)=\tau(t)\p_t$.
In other words, the algebras~$\mathfrak g_1^{\mathcal E}$ and $\mathrm{pr}_t\mathfrak g_1^{\mathcal E}$ are isomorphic.
Moreover, the corresponding projection of the equivalence group of the subclass~$\widehat{\mathcal L}_1$
coincides with the group of all local diffeomorphisms in the space of the variable~$t$.

As a result, the group classification of the class~$\widehat{\mathcal L}_1$ reduces to the classification
of (local) realizations of finite-dimensional Lie algebras by vector fields in the space of the single variable~$t$.
The latter classification is well known and was done by Sophus Lie himself.
A complete list of inequivalent realizations on the line is exhausted by the algebras
$\{0\}$, $\langle\p_t\rangle$, $\langle\p_t, \, t\p_t\rangle$ and $\langle\p_t, \, t\p_t, \, t^2\p_t\rangle$,
which gives $\{0\}$, $\langle \mathcal P\rangle$, $\langle \mathcal P,\,\mathcal D\rangle$ and $\langle \mathcal P,\,\mathcal D,\,\mathcal K\rangle$
as possible inequivalent Lie symmetry extensions within the class~$\widehat{\mathcal L}_1$.
Here $\mathcal P=\mathcal R(1)$, $\mathcal D=\mathcal R(t)$ and $\mathcal K=\mathcal R(t^2)$ are the same operators as those in the first way.
If the equation~$\mathcal E$ admits the two-dimensional extension $\langle \mathcal P,\,\mathcal D\rangle$,
then it coincides with the elementary equation $x^{(r)}=0$,
which possesses the three-dimensional extension $\langle \mathcal P,\,\mathcal D,\,\mathcal K\rangle$.
This is why the two-dimensional extension is improper.
Finally, we have three inequivalent cases of Lie symmetry extensions in the class~$\mathcal L$, which are
\begin{itemize}\itemsep=0.5ex
\item
the general case with no extension,
\item
general constant-coefficient equations admitting the one-dimensional extension $\langle \mathcal P\rangle$, and
\item
the elementary equation $x^{(r)}=0$ possessing the three-dimensional extension $\langle \mathcal P,\,\mathcal D,\,\mathcal K\rangle$.
\end{itemize}

\subsection{The third way: general form}

The group classification of the entire class~$\widehat{\mathcal L}$ can also be obtained directly from Lie's classification of
realizations of finite-dimensional Lie algebras  by vector fields in the spaces of two real or complex variables~\cite{Lie1888,LieTransformationsgruppen}.
A modern treatment of these results on realizations was presented, e.g., in~\cite{Gonzalez-Lopez1992,Nesterenko2006,Olver1994,Olver1995}.
In order to solve the group classification problem for the class~$\widehat{\mathcal L}$,
from Lie's list of realizations we select candidates for the maximal invariance algebras of equations from~$\widehat{\mathcal L}$.
All candidates should satisfy the following obvious properties, which are preserved by point transformations:
\begin{itemize}\itemsep=0.5ex
\item
The maximal Lie invariance algebra~$\mathfrak g^{\mathcal E}$ of each $r$th order linear ODE~$\mathcal E$ $(r\geqslant3)$
contains the $(r{+}1)$-dimensional almost abelian ideal~$\mathfrak g_0^{\mathcal E}$.
More precisely, the ideal~$\mathfrak g_0^{\mathcal E}$ is the semidirect sum
of an $r$-dimensional abelian ideal~$\mathfrak g_{\rm a}^{\mathcal E}$ of the whole algebra~$\mathfrak g^{\mathcal E}$
and the linear span of one more vector field whose adjoint action on the ideal~$\mathfrak g_{\rm a}^{\mathcal E}$ is the identity operator.
\item
Moreover, the ideal~$\mathfrak g_0^{\mathcal E}$ is an intransitive Lie algebra of vector fields, $\mathop{\rm rank}\mathfrak g_0^{\mathcal E}=1$.
\end{itemize}
The above properties are satisfied by
realization families 21, 23, 26 and 28 from \protect{\cite[Table~1]{Gonzalez-Lopez1992}} (or
realization families 3.2, 1.6, 1.9 and 1.11 from \protect{\cite[pp 472--473]{Olver1995}}, or
realization families 49, 51, 54 and 56 from \protect{\cite[Table~1.1]{Nesterenko2006}},
respectively).
As in the previous two ways, among $r$th order linear ODEs, only the elementary equation~$x^{(r)}=0$ admits the third realization.
At the same time, this equation also possesses the fourth realization, which is of greater dimension than the third one.
This is why the third realization should be neglected.

The equivalence within the chosen families of realizations well conforms with the point equivalence of linear ODEs.
Indeed, given two such realizations that are equivalent with respect to a point transformation~$\mathcal T$
and whose $(r{+}1)$-dimensional almost abelian ideals are of the form~\eqref{(r+1)-dim},
the transformation~$\mathcal T$ should have the form~\eqref{trans1},
where $X_0/X_1$ is a linear combinations of the parameters-functions $\varphi_1$, \dots, $\varphi_r$ of the initial realization.
The proof of this claim is based on two facts.
The first fact is trivial:
The mapping generated by a point transformation between realizations of a Lie algebra by vector fields
is a Lie algebra isomorphism and, in particular, establishes a bijection between the corresponding nilradicals.
The second fact is that nilradicals of appropriate realizations
are spanned by vector fields $\varphi_1(t)\p_x$, \dots, $\varphi_r(t)\p_x$,
where $\varphi_i=\varphi_i(t)$, $i=1,\dots,r$, are linearly independent functions.
This fact is obvious for realizations reducing to the forms~\eqref{(r+1)-dim} and~\eqref{(r+4)-dim}.
Suppose that it is not the case for a realization reducing to the form~\eqref{(r+2)-dim},
where tildes over all values are omitted.
Then for some constant~$\nu$ the corresponding nilradical includes the vector field $\p_t+\nu x\p_x$,
and commutation relations within the nilradical imply
the existence of a constant nilpotent matrix~$(\mu_{ij})$
such that \[\varphi_i'-\nu\varphi_i=\mu_{i1}\varphi_1+\dots+\mu_{ir}\varphi_r.\]
The constant~$\nu$ can be set to zero by a point transformation $\bar t=t$, $\bar x=e^{\nu t}x$.
As the functions~$\varphi_i$ are linearly independent,
up to their linear combining the matrix~$(\mu_{ij})$ can be assumed to coincide with
the $r\times r$ nilpotent Jordan block, and hence we can set~$\varphi_i=t^{i-1}$.
At the same time, the only equation that belongs to the class~$\widehat{\mathcal L}$
and is invariant with respect to the algebra $\langle\p_x,\,t\p_x,\dots,t^{r-1}\p_x\rangle$
is the free particle equation~$x''=0$,
but the maximal Lie invariance algebra of this equation is of higher dimension.

\medskip\par
As a result, using the algebraic method of group classification 
we have reproved the following assertion in three different ways 
(see, e.g.,~\cite{Krause&Michel,Mahomed&Leach1990,Olver1995,Schwarz2008,Yumaguzhin2000c}):

\begin{proposition}\label{proposition10}
The dimension of the maximal Lie invariance algebra~$\mathfrak g^{\mathcal E}$
of an $r$th order ($r\geqslant3$) linear ODE~$\mathcal E$ takes a value from $\{r+1,r+2,r+4\}$. 
In the general case $\dim\mathfrak g^{\mathcal E}=r+1$ 
the algebra~$\mathfrak g^{\mathcal E}$ is exhausted by the Lie symmetries 
that are associated with the linearity of~$\mathcal E$. 
If~$\dim\mathfrak g^{\mathcal E}\geqslant r+2$, then the equation~$\mathcal E$
is similar to a linear ODE with constant coefficients.
In~the case $\dim\mathfrak g^{\mathcal E}=r+4$ the equation~$\mathcal E$
is reduced by a point transformation of the form~\eqref{trans1}
to the elementary equation $x^{(r)}=0$.
\end{proposition}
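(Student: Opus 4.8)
The plan is to extract Proposition~\ref{proposition10} as a summary of the three independent classifications carried out above, so the proof is essentially a matter of collecting and uniformizing what has already been established. First I would recall the reduction: since the class~$\mathcal L$ with $r\geqslant3$ is normalized (Proposition~\ref{proposition3}) and its homogeneous subclass~$\widehat{\mathcal L}$ is uniformly semi-normalized with respect to linear superposition of solutions, it suffices to classify the quotient algebras $\mathfrak g^{\mathcal E}/\mathfrak g_0^{\mathcal E}$ for $\mathcal E$ running through~$\widehat{\mathcal L}$, where $\mathfrak g_0^{\mathcal E}$ is the $(r{+}1)$-dimensional ideal~\eqref{(r+1)-dim}. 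Any one of the three ways then shows that $\mathfrak g^{\mathcal E}/\mathfrak g_0^{\mathcal E}$ is $G^\sim$-equivalent either to~$\{0\}$, or to the one-dimensional algebra $\langle\mathcal P\rangle$, or to the three-dimensional algebra $\langle\mathcal P,\mathcal D,\mathcal K\rangle\cong\mathfrak{sl}(2,\mathbb F)$; in particular the two-dimensional extension $\langle\mathcal P,\mathcal D\rangle$ is improper because an equation admitting it is forced to be $x^{(r)}=0$, which admits the full three-dimensional extension. This yields $\dim\mathfrak g^{\mathcal E}\in\{r+1,\,r+2,\,r+4\}$.

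Next I would read off the three clauses of the statement from the same case analysis. In the generic case $\mathfrak g^{\mathcal E}/\mathfrak g_0^{\mathcal E}=\{0\}$, so $\mathfrak g^{\mathcal E}=\mathfrak g_0^{\mathcal E}$, which is exactly the algebra of symmetries coming from linearity (linear superposition of solutions together with the scaling $x\p_x$). For $\dim\mathfrak g^{\mathcal E}\geqslant r+2$ the quotient contains~$\langle\mathcal P\rangle$; in the rational-form picture $\mathcal P=\mathcal R(1)=\p_t$, and invariance under $\p_t$ forces the coefficients of~\eqref{ODE1} to be constant, so after the equivalence reduction to~$\widehat{\mathcal L}_1$ the equation~$\mathcal E$ is similar to a constant-coefficient linear ODE. (Equivalently, in the Laguerre--Forsyth picture the $(r{+}2)$-dimensional cases~\eqref{linear-constant-equation}, \eqref{linear-Euler-equation}, \eqref{linear-projective-equation} were all shown above to be transformable to constant-coefficient equations with $a_{r-2}\in\{0,-1,1\}$.) Finally, for $\dim\mathfrak g^{\mathcal E}=r+4$ the quotient is the whole $\mathfrak{sl}(2,\mathbb F)$, the equation is $x^{(r)}=0$ in the chosen gauge, and undoing the gauge transformation (which is of the form~\eqref{trans1}) shows that $\mathcal E$ is reduced to the elementary equation $x^{(r)}=0$ by a transformation of the form~\eqref{trans1}.

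There is essentially no new obstacle here: the content was already produced by each of the three ways, and the only thing to be careful about is consistency of the equivalence relations used in the reductions, i.e., that $\widehat G^\sim$-equivalence in~$\widehat{\mathcal L}$ corresponds correctly to $\widehat G^\sim_1$- and $\widehat G^\sim_2$-equivalence in the gauged subclasses, and that reductions to constant-coefficient form and to $x^{(r)}=0$ are realized by transformations of the claimed type~\eqref{trans1}. All of these points have been checked in the preceding subsections (via Corollaries~\ref{CorollaryOnEquivGroupoidOfLinHomogenODEsInRationalFormOfOrderGreaterThan2} and~\ref{CorollaryOnEquivGroupAndEquivGroupoidOfLinHomogenODEsInLaguerreForsythFormOfOrderGreaterThan2} and the explicit point transformations listed after~\eqref{linear-projective-equation}), so the proof is a short assembly rather than a computation; the mildly delicate step, if any, is verifying that the two-dimensional extension cannot occur, which follows because $\langle\mathcal P,\mathcal D\rangle$-invariance pins the equation down to $x^{(r)}=0$.
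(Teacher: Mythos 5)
Your proposal is correct and matches the paper's treatment: Proposition~\ref{proposition10} is stated there precisely as the summary of the three classifications of Section~\ref{Group_classification}, and your assembly of the reduction to~$\widehat{\mathcal L}$, the case analysis of the quotient algebras $\mathfrak g^{\mathcal E}/\mathfrak g_0^{\mathcal E}$ (with the two-dimensional extension ruled out as improper), and the identification of the $r+2$ and $r+4$ cases with constant-coefficient equations and with $x^{(r)}=0$ is exactly the argument the paper intends.
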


\section{Generalized extended equivalence groups}\label{SectionOnGenExtEquivGroupsOfClassesOfLinearODEs}

For each normalized class like~$\mathcal L$, $\mathcal L_1$ and $\mathcal L_2$ with $r\geqslant 3$, 
its equivalence groupoid is generated by its (usual point) equivalence group. 
The question is whether it is possible to generalize the notion of equivalence groups 
in such a way that other classes of linear ODEs will become normalized in this generalized sense. 

Given a class of (systems of) differential equations, 
every element in its usual equivalence group is 
\begin{itemize}\itemsep=0ex
\item
a point transformation in the associated extended space of independent and dependent variables, 
involved derivatives (more precisely, the corresponding jet variables) and arbitrary elements,~and 
\item
projectable to the underlying space of variables, 
i.e., its components associated with independent and dependent variables do not depend on arbitrary elements. 
\end{itemize}
These conditions of locality and projectability 
can be weakened in the course of generalizing the definition of usual equivalence group
either singly or jointly~\cite{Meleshko1994,Popovych&Kuzinger&Eshraghi2010}.
At the same time, we need to preserve the principal features 
allowing one to refer to a transformation as equivalence transformation. 
These features should definitely include the consistency with the contact structure of the underlying jet space 
and the preservation of the class under consideration. 
Moreover, in order to be treatable within the framework of the local approach, 
the transformation should at least be point with respect to the independent and the dependent variables 
while the arbitrary elements are fixed.
This is why the procedure of weakening the locality property is much more delicate 
than the straightforward neglect of the independence of certain transformation components on arbitrary elements.
Introducing nonlocalities with respect to arbitrary elements is realized 
via defining a covering for the auxiliary system of constraints for the arbitrary elements.

Considering classes of linear ODEs, 
we should simultaneously weaken the conditions of locality and projectability, 
which leads to the notion of \emph{generalized extended equivalence group}. 
The attributes ``extended'' and ``generalized'' are related to weakening locality and projectability, respectively.

As coefficients of linear ODEs depend only on~$t$, 
the formal definition of the class~$\mathcal L$ includes the auxiliary system
\begin{gather}\label{EqAuxiliarySystemForLinearODEs}
\frac{\p a_{i-1}}{\p x^{(m)}}=0,\quad \frac{\p b}{\p x^{(m)}}=0
\end{gather}
for the arbitrary elements $a_0$,~\dots, $a_{r-1}$, $b$.  

\medskip\par\noindent{\bf Notation.}
Here and in what follows 
the index~$m$ runs from 0 to~$r$, 
the indices~$i$, $j$, $k$ and~$l$ run from 1 to~$r$,  
i.e., $m=0,\dots,r$ and $i,j,k,l=1,\dots,r$, 
and we assume summation with respect to the repeated indices~$j$, $k$ and~$l$. 
\par\medskip

We extend the set of arbitrary elements by $r$ more functions~$\chi_1$,~\dots, $\chi_r$ of~$t$ 
and impose the condition that, for each equation~$\mathcal E$ from the class~$\mathcal L$, 
the associated values of the additional arbitrary elements constitute 
a fundamental set of solutions of the homogeneous equation corresponding to~$\mathcal E$. 
In other words, we construct the following covering of the auxiliary system~\eqref{EqAuxiliarySystemForLinearODEs}:
\begin{subequations}\label{EqAuxiliarySystemForReparamLinearODEs}
\begin{gather}
\frac{\p a_{i-1}}{\p x^{(m)}}=0,
\label{EqAuxiliarySystemForReparamLinearODEsA}\\[.3ex]
\frac{\p b}{\p x^{(m)}}=0,\quad \frac{\p\chi_i}{\p x^{(m)}}=0,\quad
\label{EqAuxiliarySystemForReparamLinearODEsB}\\[.3ex]
\frac{\p^r\chi_i}{\p t^r}+a_{r-1}\frac{\p^{r-1}\chi_i}{\p t^{r-1}}+\dots+a_1\frac{\p\chi_i}{\p t}+a_0\chi_i=0,
\label{EqAuxiliarySystemForReparamLinearODEsC}\\[.3ex]
W(\chi_1,\dots,\chi_r):=\det\left(\frac{\p^{i-1}\chi_{j}}{\p t^{i-1}}\right)\ne0,
\label{EqAuxiliarySystemForReparamLinearODEsD}
\end{gather}
\end{subequations}
i.e., $W(\chi_1,\dots,\chi_r)$ denotes the Wronskian of~$\chi_1$,~\dots, $\chi_r$ with respect to the variable~$t$.
In view of the fact that each homogeneous linear ODE defines its fundamental set of solutions up to 
nonsingular linear combining, 
the penalty for extending the set of arbitrary elements is 
the appearance of gauge-equivalent tuples of arbitrary elements, 
i.e., the one-to-one correspondence between equations and tuples of arbitrary elements is lost.
The associated gauge equivalence group consists of the transformations of the form 
\begin{gather*}
\tilde t=t,\quad 
\tilde x^{(m)}=x^{(m)},\quad 
\tilde a_{i-1}=a_{i-1},\quad  
\tilde b=b,\quad  
\tilde \chi_i=\mu_{ij}\chi_j,  
\end{gather*}
where $\mu_{ij}$ are arbitrary constants with $\det(\mu_{ij})\ne0$.
See~\cite[Section~3.3.5]{LisleDissertation}%
\footnote{
Gauge equivalence transformations are called trivial in~\cite{LisleDissertation}.
}
and~\cite[Section~2.5]{Popovych&Kuzinger&Eshraghi2010} for a discussion of gauge equivalence. 

It is more convenient to interpret the introduction of the arbitrary elements~$\chi_1$,~\dots, $\chi_r$ 
as reparameterization of the class~$\mathcal L$, rather than the extension of its set of arbitrary elements. 
An equation of the form~\eqref{ODE} with a fundamental solution set $\{\chi_1,\dots,\chi_r\}$ 
can be represented as 
\[
\frac{W(\chi_1,\dots,\chi_r,x)}{W(\chi_1,\dots,\chi_r)}=b.
\]
This means that the arbitrary elements~$a_0$,~\dots, $a_{r-1}$ are completely defined by~$\chi_1$,~\dots, $\chi_r$, 
\[
a_{i-1}=\frac{(-1)^{r+i-1}}{W(\chi_1,\dots,\chi_r)}
\det\left(\frac{\p^m\chi_{j}}{\p t^m}\right)_{m\ne i-1},
\]
and hence the tuple of arbitrary elements reduces to $(\chi_1,\dots,\chi_r,b)$. 
The auxiliary system of constraints for the reduced tuple of arbitrary elements 
consists of the equations~\eqref{EqAuxiliarySystemForReparamLinearODEsB} 
and the inequality \eqref{EqAuxiliarySystemForReparamLinearODEsD}.

The reparameterized counterparts of subclasses of the class~$\mathcal L$ with $r\geqslant2$ 
are singled out from the reparameterization of~$\mathcal L$ 
by more constraining the arbitrary elements~$\chi_1$,~\dots, $\chi_r$, $b$.
For each subclass studied in Section~\ref{Equivalence_groupoids} we present 
the additional constraints to the auxiliary system~\eqref{EqAuxiliarySystemForReparamLinearODEsB}, \eqref{EqAuxiliarySystemForReparamLinearODEsD}
for the arbitrary elements of its reparameterization. 
Using the knowledge of the corresponding equivalence groupoid in the case $r\geqslant3$ 
and following the proof of Proposition~\ref{proposition1} in the case~$r=2$,  
we simultaneously give the general form of transformations from the generalized equivalence group of the reparameterized subclass.%
\footnote{%
We allow the dependence of transformation components on derivatives of arbitrary elements. 
In order to accurately interpret such transformations as elements of generalized equivalence groups, 
we should in fact extend the tuple of arbitrary elements and the auxiliary system for them 
by assuming all the derivatives of~$\chi_i$ up to order~$r$ as arbitrary elements and 
by considering relations between successive derivatives as constraints for these new arbitrary elements. 
} 
For such transformations we need to give all their components, including those associated with the arbitrary elements.

\medskip\par\noindent
$\mathcal L\colon\quad -\,$;
\begin{gather*}
\tilde t=T(t), \quad 
\tilde x=X_1(t)\big(x+\tilde X_0(t)\big),\quad
\tilde \chi_i=X_1(t)\mu_{ij}\chi_j,
\\[.5ex] 
\tilde b=\frac{X_1(t)}{(T_t(t))^r}\left(b+\frac{W(\chi_1,\dots,\chi_r,\tilde X_0(t))}{W(\chi_1,\dots,\chi_r)}\right),
\end{gather*}
where $T$, $X_1$ and $\tilde X_0$ are arbitrary smooth functions of~$t$ with $T_tX_1\ne0$ and 
$\mu_{ij}$ are arbitrary constants with $\det(\mu_{ij})\ne0$.

\medskip\par\noindent
$\widehat{\mathcal L}\colon\quad b=0$;
\begin{gather*}
\tilde t=T(t), \quad 
\tilde x=X_1(t)\big(x+\nu_j\chi_j\big),\quad
\tilde \chi_i=X_1(t)\mu_{ij}\chi_j,
\end{gather*}
where $T$ and $X_1$ are arbitrary smooth functions of~$t$ with $T_tX_1\ne0$ and 
$\mu_{ij}$ and~$\nu_j$ are arbitrary constants with $\det(\mu_{ij})\ne0$.

\pagebreak

\medskip\par\noindent
$\mathcal L_1\colon\quad \det\left(\dfrac{\p^m\chi_{j}}{\p t^m}\right)_{m\ne r-1}=0$;
\begin{gather*}
\tilde t=T(t), \quad 
\tilde x=C(T_t(t))^{\frac{r-1}2}\big(x+\tilde X_0(t)\big),\quad
\tilde \chi_i=C(T_t(t))^{\frac{r-1}2}\mu_{ij}\chi_j,
\\[.5ex] 
\tilde b=\frac C{(T_t(t))^{\frac{r+1}2}}\left(b+\frac{W(\chi_1,\dots,\chi_r,\tilde X_0(t))}{W(\chi_1,\dots,\chi_r)}\right),
\end{gather*}
where $T$ and $\tilde X_0$ are arbitrary smooth functions of~$t$ with $T_t\ne0$ and 
$C$ and~$\mu_{ij}$ are arbitrary constants with $C\det(\mu_{ij})\ne0$. 
Here and in the next case the powers of~$T_t$ for even~$r$ are interpreted in the same way as in footnote~\ref{FootnoteOnPowersOfTt}.

\medskip\par\noindent
$\widehat{\mathcal L}_1\colon\quad \det\left(\dfrac{\p^m\chi_{j}}{\p t^m}\right)_{m\ne r-1}=0,\quad b=0$;
\begin{gather*}
\tilde t=T(t), \quad 
\tilde x=C(T_t(t))^{\frac{r-1}2}\big(x+\nu_j\chi_j\big),\quad
\tilde \chi_i=C(T_t(t))^{\frac{r-1}2}\mu_{ij}\chi_j,
\end{gather*}
where $T$ is an arbitrary smooth function of~$t$ with $T_t\ne0$ and 
$C$, $\mu_{ij}$ and~$\nu_j$ are arbitrary constants with $C\det(\mu_{ij})\ne0$.

\medskip\par\noindent
$\mathcal L_2\colon\quad \det\left(\dfrac{\p^m\chi_{j}}{\p t^m}\right)_{m\ne r-1}=\det\left(\dfrac{\p^m\chi_{j}}{\p t^m}\right)_{m\ne r-2}=0$;
\begin{gather*}
\tilde t=\frac{\alpha t+\beta}{\gamma t+\delta}, \quad 
\tilde x=\frac C{(\gamma t+\delta)^{r-1}}\big(x+\tilde X_0(t)\big),\quad
\tilde \chi_i=\frac C{(\gamma t+\delta)^{r-1}}\mu_{ij}\chi_j,
\\[.5ex] 
\tilde b=C\frac {(\gamma t+\delta)^{r+1}}{(\alpha\delta-\beta\gamma)^r}
\left(b+\frac{W(\chi_1,\dots,\chi_r,\tilde X_0(t))}{W(\chi_1,\dots,\chi_r)}\right),
\end{gather*}
where $\alpha$, $\beta$, $\gamma$, $\delta$ and~$C$ are arbitrary constants with $\alpha\delta-\beta\gamma\ne0$ and $C\ne0$
that are defined up to obvious rescaling (so, only four constants among them are essential), 
$X_0$ is an arbitrary smooth function of~$t$
and $\mu_{ij}$ are arbitrary constants with $\det(\mu_{ij})\ne0$.

\medskip\par\noindent
$\widehat{\mathcal L}_2,\ r\geqslant3\colon\quad \det\left(\dfrac{\p^m\chi_{j}}{\p t^m}\right)_{m\ne r-1}=\det\left(\dfrac{\p^m\chi_{j}}{\p t^m}\right)_{m\ne r-2}=0,\quad b=0$;
\begin{gather*}
\tilde t=\frac{\alpha t+\beta}{\gamma t+\delta}, \quad 
\tilde x=\frac C{(\gamma t+\delta)^{r-1}}\big(x+\nu_j\chi_j\big),\quad
\tilde \chi_i=\frac C{(\gamma t+\delta)^{r-1}}\mu_{ij}\chi_j,
\end{gather*}
where $\alpha$, $\beta$, $\gamma$, $\delta$ and~$C$ are arbitrary constants with $\alpha\delta-\beta\gamma\ne0$ and $C\ne0$
that are defined up to obvious rescaling (so, only four constants among them are essential)
and $\mu_{ij}$ and~$\nu_j$ are arbitrary constants with $\det(\mu_{ij})\ne0$.

Up to gauge equivalence, the reparameterized subclass~$\widehat{\mathcal L}_2$ with $r=2$ 
consists of the single equation $x''=0$. 
Hence the generalized equivalence group of this subclass coincides with its usual equivalence group 
and is generated by the gauge equivalence group of this subclass and the point symmetry group of the equation $x''=0$. 
Recall that the last group consists of fractional linear transformations in the space of~$(t,x)$.
It is obvious that this subclass is normalized in the usual sense. 
Fixing the values $\chi_1=1$ and $\chi_2=t$ for canonical representatives of sets of gauge-equivalent tuples of arbitrary elements
leads to disappearing gauge equivalence transformations.

\medskip\par\noindent
$\mathcal A_1\colon\quad \chi_1=1$;
\begin{gather*}
\tilde t=T(t), \quad 
\tilde x=\frac{x+\tilde X_0(t)}{\mu_{1k}\chi_k},\quad
\tilde \chi_i=\frac{\mu_{ij}\chi_j}{\mu_{1k}\chi_k},
\quad 
\tilde b=\frac{(T_t(t))^{-r}}{\mu_{1k}\chi_k}\left(b+\frac{W(\chi_1,\dots,\chi_r,\tilde X_0(t))}{W(\chi_1,\dots,\chi_r)}\right),
\end{gather*}
where $T$ and $\tilde X_0$ are arbitrary smooth functions of~$t$ with $T_t\ne0$ and 
$\mu_{ij}$ are arbitrary constants with $\det(\mu_{ij})\ne0$.

\pagebreak 

\medskip\par\noindent
$\widehat{\mathcal A}_1\colon\quad \chi_1=1,\quad b=0$;
\begin{gather*}
\tilde t=T(t), \quad 
\tilde x=\frac{x+\nu_j\chi_j}{\mu_{1k}\chi_k},\quad
\tilde \chi_i=\frac{\mu_{ij}\chi_j}{\mu_{1k}\chi_k},
\end{gather*}
where $T$ is an arbitrary smooth function of~$t$ with $T_t\ne0$ and 
$\mu_{ij}$ and $\nu_j$ are arbitrary constants with $\det(\mu_{ij})\ne0$.

\medskip\par\noindent
$\mathcal A_2\colon\quad \chi_1=1,\quad \chi_2=t$;
\begin{gather*}
\tilde t=\frac{\mu_{2j}\chi_j}{\mu_{1k}\chi_k}, \quad 
\tilde x=\frac{x+\tilde X_0(t)}{\mu_{1k}\chi_k},\quad
\tilde \chi_i=\frac{\mu_{ij}\chi_j}{\mu_{1k}\chi_k},
\\[.5ex] 
\tilde b=\frac{(\mu_{1k}\chi_k)^{2r-1}}{\big(\mu_{2j}\mu_{1l}(\chi_j'\chi_l-\chi_j\chi_l')\big)^r}
\left(b+\frac{W(\chi_1,\dots,\chi_r,\tilde X_0(t))}{W(\chi_1,\dots,\chi_r)}\right),
\end{gather*}
where $\mu_{ij}$ are arbitrary constants with $\det(\mu_{ij})\ne0$
and $\tilde X_0$ is an arbitrary smooth function of~$t$.

\medskip\par\noindent
$\widehat{\mathcal A}_2,\ r\geqslant3\colon\quad \chi_1=1,\quad \chi_2=t,\quad b=0$;
\begin{gather*}
\tilde t=\frac{\mu_{2j}\chi_j}{\mu_{1k}\chi_k}, \quad 
\tilde x=\frac{x+\nu_j\chi_j}{\mu_{1k}\chi_k},\quad
\tilde \chi_i=\frac{\mu_{ij}\chi_j}{\mu_{1k}\chi_k},
\end{gather*}
where $\mu_{ij}$ and $\nu_j$ are arbitrary constants with $\det(\mu_{ij})\ne0$. 

For $r=2$, the reparameterized subclass~$\widehat{\mathcal A}_2$   
coincides with the reparameterized subclass~$\widehat{\mathcal L}_2$ 
since it consists of the single equation $x''=0$; see above.

\medskip

For each of the reparameterized classes~$\mathcal L$, $\mathcal L_1$ and~$\mathcal L_2$, 
its generalized equivalence group is not truly generalized 
since the transformation components for variables~$t$ and~$x$ do in fact not depend on arbitrary elements. 
Thus, this group coincides with the corresponding usual equivalence group, 
which generates the entire equivalence groupoid of the reparameterized class if $r\geqslant3$. 
In other words, the reparameterization preserves 
the normalization of the classes~$\mathcal L$, $\mathcal L_1$ and~$\mathcal L_2$ in the usual sense. 

At the same time, for the others of the above subclasses 
the presented equivalence groups are truly generalized 
and, if $r\geqslant3$, generate the entire equivalence groupoids of these subclasses. 
Therefore, although the classes~$\widehat{\mathcal L}$, $\widehat{\mathcal L}_1$, $\widehat{\mathcal L}_2$, 
$\mathcal A_1$, $\widehat{\mathcal A}_1$, $\mathcal A_2$ and $\widehat{\mathcal A}_2$ 
are not normalized in the usual or the generalized sense 
and, moreover, the classes associated with the Arnold forms are even not semi-normalized for $r\geqslant3$, 
the reparameterized versions of all these classes 
are normalized (resp.\ semi-normalized) in the generalized sense if $r\geqslant3$ (resp.\ $r=2$).  
It can be said that the reparameterization based on fundamental sets of solutions improves 
normalization properties of these classes. 

The arbitrary elements~$\chi_1$,~\dots, $\chi_r$ are nonlocally related to~$a_0$,~\dots, $a_{r-1}$. 
The generalized equivalence groups of the reparameterized counterparts generate the entire corresponding groupoids. 
Hence they are maximal, in certain sense, among generalized equivalence groups of classes 
obtained from the initial classes with replacing the corresponding auxiliary systems by their coverings.
Therefore, these groups can be considered as 
the \emph{generalized extended equivalence groups} of the subclasses of the class~$\mathcal L$. 
Summing up the above consideration, we obtain the following assertion. 

\begin{proposition}\label{ProposionOnGenExtNormalizationOfClassesOfLinearODEs}
The classes~$\widehat{\mathcal L}$, $\widehat{\mathcal L}_1$, $\widehat{\mathcal L}_2$, 
$\mathcal A_1$, $\widehat{\mathcal A}_1$, $\mathcal A_2$ and $\widehat{\mathcal A}_2$ with $r\geqslant3$
are normalized in the generalized extended sense. 
The corresponding generalized extended equivalence groups are related to 
the reparameterization based on fundamental sets of solutions. 
\end{proposition}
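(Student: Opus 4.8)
The plan is to read off the assertion from the complete descriptions of the equivalence groupoids of the classes $\widehat{\mathcal L}$, $\widehat{\mathcal L}_1$, $\widehat{\mathcal L}_2$, $\mathcal A_1$, $\widehat{\mathcal A}_1$, $\mathcal A_2$ and $\widehat{\mathcal A}_2$ obtained in Section~\ref{Equivalence_groupoids}, combined with the explicit reparameterizations and candidate generalized equivalence groups listed just above the statement. For each of these classes and $r\geqslant3$, the transformational part $\mathcal T$ of an arbitrary admissible transformation has a rigidly prescribed shape in which the only source-dependent data are solutions of the homogeneous equation associated with the equation-source: a single solution $X_0/X_1$ for $\widehat{\mathcal L}$ (Corollary~\ref{CorollaryOnEquivGroupoidOfLinHomogenODEsOfOrderGreaterThan2}), the product $T_t^{-(r-1)/2}X_0$ for $\widehat{\mathcal L}_1$ (Corollary~\ref{CorollaryOnEquivGroupoidOfLinHomogenODEsInRationalFormOfOrderGreaterThan2}), the product $(\gamma t+\delta)^{r-1}X_0$ for $\widehat{\mathcal L}_2$ (Corollary~\ref{CorollaryOnEquivGroupAndEquivGroupoidOfLinHomogenODEsInLaguerreForsythFormOfOrderGreaterThan2}), a nonzero solution $\psi_1$ together with a free function $X_0$ for $\mathcal A_1$ and $\widehat{\mathcal A}_1$ (Proposition~\ref{proposition6}, Corollary~\ref{Corollary1On1stArnoldForm}), and two linearly independent solutions $\psi_1,\psi_2$ together with $X_0$ for $\mathcal A_2$ and $\widehat{\mathcal A}_2$ (Proposition~\ref{proposition8}, Corollary~\ref{Corollary1On2ndArnoldForm}). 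In the reparameterized class every such solution is a linear combination of the arbitrary elements $\chi_1,\dots,\chi_r$ with constant coefficients (arbitrary in the inhomogeneous cases, subject to the reproduced $\tilde b$-relation otherwise), so the source-dependent data are encoded by the constants $\mu_{ij}$ and $\nu_j$. Substituting these expressions into the groupoid descriptions reproduces, term by term, the transformation formulas displayed above for each reparameterized class; hence every admissible transformation of the original (non-reparameterized) class is induced by a transformation from the corresponding listed family.

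It then remains to confirm that these families are genuinely generalized (extended) equivalence groups of the reparameterized classes. First I would check closure under composition and inversion: the components for $t$ and $x$ compose exactly as in the usual-equivalence computations of Propositions~\ref{proposition3}--\ref{proposition9}, with the $\chi_i$-dependent coefficients playing the former role of the functional coefficients $X_1$, $\psi_1$; and the components $\tilde\chi_i$, which carry the same multiplier as $\tilde x$ together with the reshuffling $\chi_j\mapsto\mu_{ij}\chi_j$, are consistent precisely because the transformational part sends solutions of the source equation to solutions of the target equation and a fundamental set to a fundamental set, the matrix of the latter map being $(\mu_{ij})$ (which forces $\det(\mu_{ij})\ne0$, and, for $\mathcal A_1,\mathcal A_2$, compatibility with the reparameterization constraints $\chi_1=1$, $\chi_2=t$). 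Second, the component $\tilde b$ is forced by the known transformation law for $b$: using the Wronskian representation $W(\chi_1,\dots,\chi_r,x)/W(\chi_1,\dots,\chi_r)=b$ together with $\tilde\chi_i$ and $\tilde x$ one recovers the displayed formula for $\tilde b$, and in the homogeneous subcases the requirement that $\nu_j\chi_j$ solve the source equation collapses it to $\tilde b=0$, so the subclass is preserved. Each listed family thus consists of point transformations in the extended space of $(t,x)$, the jet variables and the reparameterized arbitrary elements; it is projectable modulo arbitrary elements for the reparameterized $\mathcal L$, $\mathcal L_1$, $\mathcal L_2$ and genuinely non-projectable for the remaining classes, and it generates the entire corresponding equivalence groupoid, which by the maximality observation made just above the statement identifies it with the \emph{generalized extended} equivalence group.

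Combining the two halves, the equivalence groupoid of each reparameterized class is generated by the corresponding generalized extended equivalence group, which is the definition of normalization in the generalized extended sense, and these groups arise from the reparameterization based on fundamental sets of solutions, as claimed. I expect the only real obstacle to be the careful treatment of transformation components that depend on the derivatives $\p^m\chi_i/\p t^m$: to interpret such transformations honestly as elements of a generalized equivalence group one adjoins all these derivatives, $m=1,\dots,r$, to the tuple of arbitrary elements and adds the successive-derivative relations together with~\eqref{EqAuxiliarySystemForReparamLinearODEsC} to the auxiliary system, after which the verification reduces to the bookkeeping indicated above. The conceptual content — that the linear superposition of solutions and the reshuffling of fundamental sets, which spoiled normalization in the usual (or purely extended) sense, are exactly absorbed into the equivalence group once the class is reparameterized — is already contained in the groupoid descriptions of Section~\ref{Equivalence_groupoids}.
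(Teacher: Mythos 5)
Your proposal is correct and follows essentially the same route as the paper: the paper's own argument is precisely the preceding discussion of Section~\ref{SectionOnGenExtEquivGroupsOfClassesOfLinearODEs}, which derives the listed generalized equivalence transformations from the groupoid descriptions of Section~\ref{Equivalence_groupoids} by observing that the source-dependent solution data in the transformational parts become constant linear combinations of the new arbitrary elements $\chi_1,\dots,\chi_r$, exactly as you do. Your additional remarks on closure, the Wronskian derivation of the $\tilde b$-component, and the adjunction of the derivatives of the $\chi_i$ to the tuple of arbitrary elements only make explicit what the paper relegates to footnotes and summary statements.
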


\pagebreak

\section{Conclusion}

In this paper we exhaustively describe the equivalence groupoid of the class~$\mathcal L$ of $r$th order linear ODEs
as well as equivalence groupoids of its subclasses~$\mathcal L_1$, $\mathcal L_2$, $\mathcal A_1$ and~$\mathcal A_2$
associated with the rational, the Laguerre--Forsyth, the first and second Arnold forms,
i.e., the classes of equations
of the form~\eqref{ODE}, \eqref{ODE1}, \eqref{ODE2}, \eqref{1st_Arnold's_form} and \eqref{2st_Arnold's_form}, respectively.
The corresponding classes~\smash{$\widehat{\mathcal L}$}, \smash{$\widehat{\mathcal L}_1$}, 
\smash{$\widehat{\mathcal L}_2$}, \smash{$\widehat{\mathcal A}_1$} and~\smash{$\widehat{\mathcal A}_2$}
of homogeneous equations are also studied from the point of view of admissible transformations.

The case $r=2$ is singular for all the above classes.
Each of the classes~$\mathcal L$, \smash{$\widehat{\mathcal L}$, $\mathcal L_1$, $\widehat{\mathcal L}_1$,}
$\mathcal L_2=\mathcal A_2$, $\mathcal A_1$ and~\smash{$\widehat{\mathcal A}_1$} with $r=2$
is an orbit of the free particle equation $x''=0$ with respect to the equivalence group of this class.
This is why its equivalence groupoid is generated by the compositions of transformations from its equivalence group with
transformations from the point symmetry group of the equation $x''=0$.
Hence the class is semi-normalized but not normalized, cf.\ the proof of Proposition~\ref{proposition2}.
The class \smash{$\widehat{\mathcal L}_2=\widehat{\mathcal A}_2$} is constituted by the single equation $x''=0$
and, thus, is normalized.

%\looseness=-1
For $r\geqslant3$, the equivalence groupoids of the classes~$\mathcal L$, $\mathcal L_1$ and~$\mathcal L_2$
are generated by the corresponding (usual) equivalence groups.
In other words, each of these classes is normalized, see Propositions~\ref{proposition3}, \ref{proposition4} and \ref{proposition5}.
The associated subclasses of homogeneous equations are uniformly semi-normalized
with respect to the linear superposition symmetry groups of their equations.
%with respect to the symmetry groups constituted by the linear superposition transformations of solutions of their equations.
This allows us to classify Lie symmetries of linear ODEs using the algebraic tools in three different ways.
The purpose of the presentation of various ways for carrying out the known classification
is to demonstrate advantages and disadvantages of each of them,
which is important, e.g., to effectively apply the algebraic approach to group classification of systems of linear ODEs.
Thus, the classification based on the Laguerre--Forsyth form, which is associated with a maximal gauge of arbitrary elements,
is just reduced to the classification of subalgebras of the algebra~$\mathfrak{sl}(2,\mathbb F)$,
which is finite dimensional (more precisely, three-dimensional).
The use of the rational form leads to involving the classification of all possible realizations
of finite-dimensional Lie algebras on the line.
At the same time, the single classification case of constant-coefficient equations in the rational form
is split in the Laguerre--Forsyth form into three (resp.\ two) cases over the real (resp.\ complex) field,
and two (resp.\ one) of them are related to variable-coefficient equations.
If we neglect the possibility of gauging arbitrary elements and consider general linear ODEs,
we need to classify specific realizations of specific Lie algebras in the space of two variables.

The structure of the equivalence groupoids of the classes~$\mathcal A_1$, \smash{$\widehat{\mathcal A}_1$,} $\mathcal A_2$ and~\smash{$\widehat{\mathcal A}_2$}
associated with the Arnold forms, where $r\geqslant3$, is more complicated
since these classes are even not semi-normalized.
This is why they are not usable for the group classification of the class~$\mathcal L$, 
although these are the forms that are involved in reduction of order of linear ODEs. 

Normalization properties of those of the above classes that are not normalized can be improved 
by reparameterizing these classes,  
cf.\ Proposition~\ref{ProposionOnGenExtNormalizationOfClassesOfLinearODEs}. 
At the same time, the reparameterization is not applicable to group classification of linear ODEs due to 
the complicated relation between old and new arbitrary elements,
the complex involvement of arbitrary elements in the new representation of equations 
and the appearance of gauge equivalence transformations. 

In contrast to single linear ODEs,
results concerning group properties of normal systems of second-order linear ODEs are very far from to be completed, not to mention general systems of linear ODEs;
see a more detailed discussion in~\cite{Boyko2013}.
Only recently the group classification of systems of second-order linear ODEs with commuting constant-coefficient matrices
was considered for various particular cases of the number of equations (two, three or four) and of the structure of the coefficient matrices
in a series of papers \cite{Campoamor2011,Campoamor2012,Meleshko2011,WafoSoh2010}
and was then exhaustively solved in~\cite{Boyko2013}.
In spite of a number of publications on the subject,
the group classification of systems of linear second-order ODEs with noncommuting constant-coefficient matrices or with general nonconstant coefficients
was carried out only for the cases of two and three equations~\cite{Moyo&Meleshko&Oguis2013,Suksern&Moyo&Meleshko2013,WafoSoh2000}.
The consideration of a greater number of equations or equations of higher and different orders
within the framework of  the standard ``compatibility'' approach requires cumbersome computations.

Even the least upper bound for the dimensions of the Lie symmetry algebras 
of normal systems of $r$th order ODEs in $n$ dependent variables is still not found for general values of $n$ and~$r>3$;
see the discussion of related results in \cite[p.~206]{Olver1995}. 
The best from known upper bounds when $r>3$ is $n^2+(r+1)n+2$~\cite{Gonzalez-Gascon&Gonzalez-Lopez1985}, 
but this is greater than the dimension of the Lie symmetry algebra of the elementary system $x_i{}^{(r)}=0$, $i=1,\dots,n$,  
which is equal to $n^2+rn+3$~\cite{Gonzalez-Gascon&Gonzalez-Lopez1983} 
and which may hypothetically coincide with the least upper bound. 
(The Lie symmetry algebras of all the elementary systems were computed in~\cite{Gonzalez-Gascon&Gonzalez-Lopez1983}.)
The least upper bound for $r=2$ is $n^2+4n+3$. 
It was first presented in \cite[pp.~68--69, Theorem~44]{Markus1959} but the proof contained a number of weaknesses although they may be eliminated.   
Later the least upper bound for $r=2$ was accurately derived in~\cite{Gonzalez-Gascon&Gonzalez-Lopez1983}. 
It is really minimal since it coincides with the dimension of the well-known Lie symmetry algebra of the free particle system $x_i''=0$, $i=1,\dots,n$, 
which is isomorphic to~$\mathfrak{sl}(n+2,\mathbb F)$~\cite{Gonzalez-Gascon&Gonzalez-Lopez1983}.
The least upper bound for $r=3$, which is $n^2+3n+3$, was found in~\cite{Fels1993}. 
It also coincides with the dimension of the Lie symmetry algebra of the corresponding elementary system $x_i'''=0$, $i=1,\dots,n$.
Moreover, it was shown in~\cite{Fels1993} using the Cartan equivalence method (see also~\cite{Fels1995}) that 
the Lie symmetry algebra of a system of second (resp.\ third) order ODEs is of maximal dimension if and only if 
this system is reduced by a point transformation to the relevant elementary system.
The particular case of this assertion for linear systems of second-order ODEs has earlier been proved in~\cite{Gonzalez-Lopez1988}.

Hence there is a demand for the development of new, more powerful, algebraic and geometric tools for the study of Lie symmetries,
which, for instance, involve a deep investigation of associated equivalence groupoids and other related algebraic structures.

\bigskip\par\noindent{\bf Acknowledgments.}
The research of ROP was supported by the Austrian Science Fund (FWF), project P25064.
VMB and ROP thank the University of Cyprus for the hospitality and financial support.

\end{document}